\documentclass[opre,nonblindrev]{informs3} %

\OneAndAHalfSpacedXI %

\usepackage{natbib}
 \bibpunct[, ]{(}{)}{,}{a}{}{,}%

\usepackage{booktabs}
\usepackage{graphicx}
\usepackage{multirow}
\usepackage{anyfontsize}
\usepackage{dsfont}
\usepackage{hyperref}
\usepackage{cleveref}
\usepackage{subcaption}
\usepackage{algorithm, algorithmicx, algpseudocode}
\usepackage{multibib}
\newcites{A}{Appendix References}
\usepackage{pdflscape}

\usepackage[showdeletions]{color-edits}
\addauthor[Chiwei]{cy}{cyan}
\addauthor[Junlin]{jc}{magenta}
\TheoremsNumberedThrough     %

\EquationsNumberedThrough    %

\begin{document}

\RUNAUTHOR{Chen, Yan, and Jiang} %

\RUNTITLE{On the LP for Dynamic Stochastic Matching and Its Application to Pricing}

\TITLE{On the Linear Programming Model for Dynamic Stochastic Matching and Its Application to Pricing}

\ARTICLEAUTHORS{%
\AUTHOR{Junlin Chen}
\AFF{Department of Industrial Engineering, Tsinghua University, \EMAIL{cjl21@mails.tsinghua.edu.cn}}
\AUTHOR{Chiwei Yan}
\AFF{Department of Industrial Engineering and Operations Research, University of California, Berkeley, \EMAIL{chiwei@berkeley.edu}}
\AUTHOR{Hai Jiang}
\AFF{Department of Industrial Engineering, Tsinghua University, \EMAIL{haijiang@tsinghua.edu.cn}}
} %

\ABSTRACT{
Important pricing problems in centralized matching markets---such as carpooling, food delivery and freight shipping platforms---often exhibit a bi-level structure. At the upper level, the platform sets prices for heterogeneous demand types (e.g., rides across origin-destination pairs, food delivery orders across restaurant-customer pairs, or less-than-truckload shipments). The lower level subsequently matches converted demands to minimize operational costs; for example, by pooling riders with similar itineraries into shared vehicles or consolidating multiple food-delivery and less-than-truckload freight orders into single courier or trailer routes. Motivated by these applications, we study the optimal value (cost) function of a linear programming model with respect to demand arrival rates, originally proposed by \citet{aouad2022dynamic} for cost-minimizing dynamic stochastic matching under limited time. In particular, we study the concavity properties of this cost function. We show that it suffices for every optimal basic feasible solution of the linear program to be nondegenerate in order to guarantee weak concavity. Leveraging this insight, we further establish that weak concavity holds when all demand types have strictly positive unmatched rates---a natural condition in stochastic environments when demands have limited patience---and characterize conditions under which this property is satisfied in the fluid linear program. Building on these theoretical insights, we develop a Minorization-Maximization (MM) algorithm that exploits the resulting difference-of-concave structure of the pricing problem. The algorithm requires little stepsize tuning and delivers substantial performance improvements over projected gradient methods on a large-scale, real-world ridesharing dataset with thousands of rider types (origin-destination pairs). This makes it a compelling algorithmic choice for solving such pricing problems in practice.

}

\KEYWORDS{dynamic stochastic matching, pricing, minorization-maximization}

\maketitle

\section{Introduction}

Platforms that facilitate centralized dynamic matching are becoming increasingly prevalent in modern marketplaces. Ridesharing, food delivery, and freight logistics platforms all dynamically match stochastically arriving demand streams---such as riders, food orders, or less-than-truckload (LTL) shipments with compatible origins and destinations---into consolidated trips to minimize operational costs and maximize efficiency (see, e.g., \citealt{chen2024courier,yan2023pricing,ma2025potential}). Across these platforms, the matching process is constrained by ``patience'', such as passenger wait times, delivery guarantees, or shipping deadlines. If a compatible match is not secured before these limits expire, the platform must dispatch unmatched orders individually, resulting in higher marginal costs.

Building on the dynamic matching algorithms employed by these platforms, this research is motivated by the related \emph{pricing} challenges---specifically, the problem of determining upfront fares for demands of different types (e.g., riders with varying origins and destinations) to maximize total profit. \Cref{fig:screenshots} illustrates two real-world instances of this pricing challenge. \Cref{fig:screenshots_1} depicts a shared ride service on Uber, which offers a discounted fare relative to the UberX solo option. Similarly, \Cref{fig:screenshots_2} displays the DoorDash interface, where batched delivery is set as the default; customers must pay an additional premium for a ``Direct to you'' option to receive priority, non-batched service.

\begin{figure}[htbp]
    \centering
    \begin{subfigure}[b]{0.3\textwidth}
        \centering
        \includegraphics[width=\textwidth]{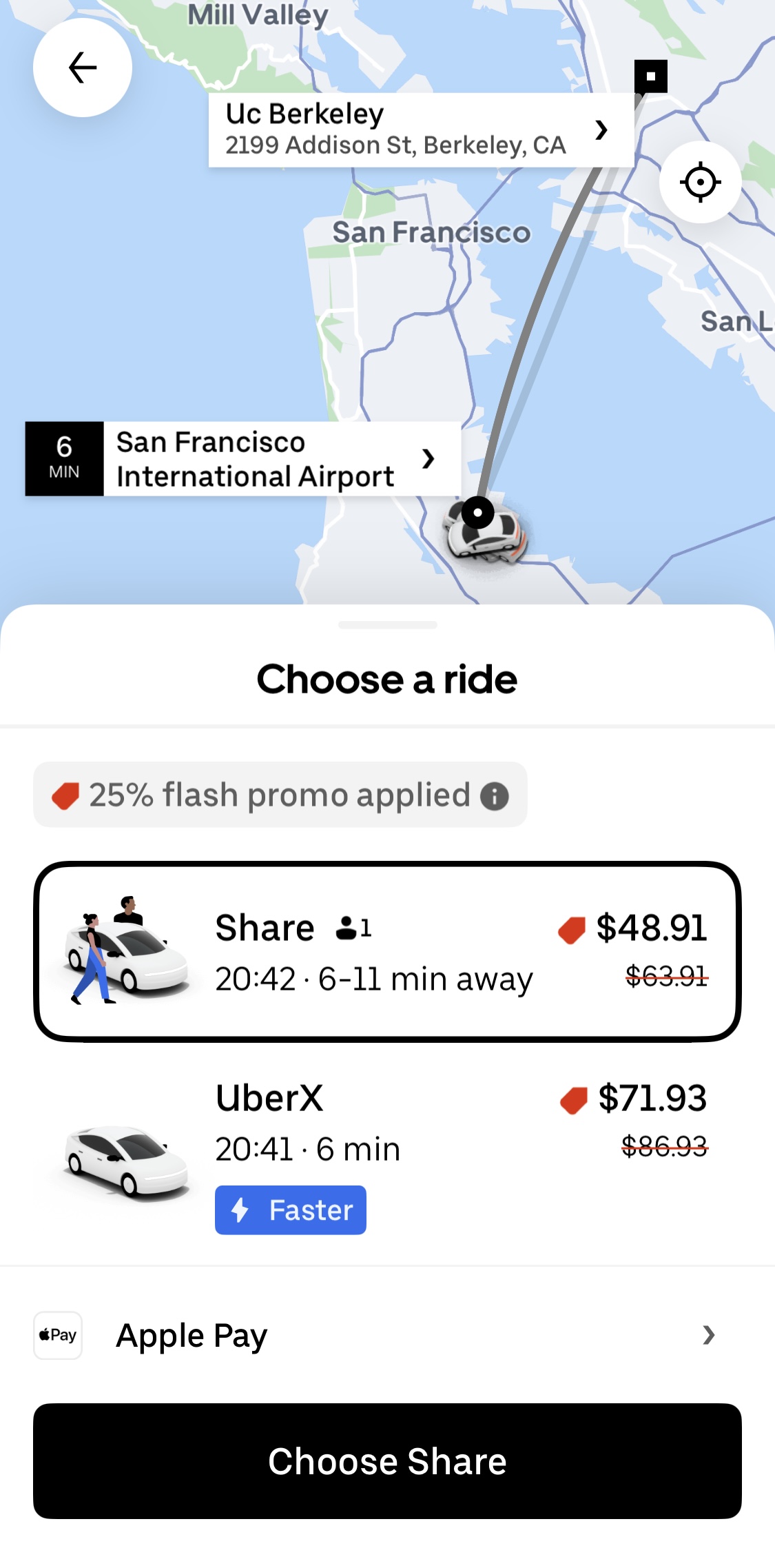}
        \caption{Shared ride}
        \label{fig:screenshots_1}
    \end{subfigure}
    \hspace{2cm}
    \begin{subfigure}[b]{0.3\textwidth}
        \centering
        \includegraphics[width=\textwidth]{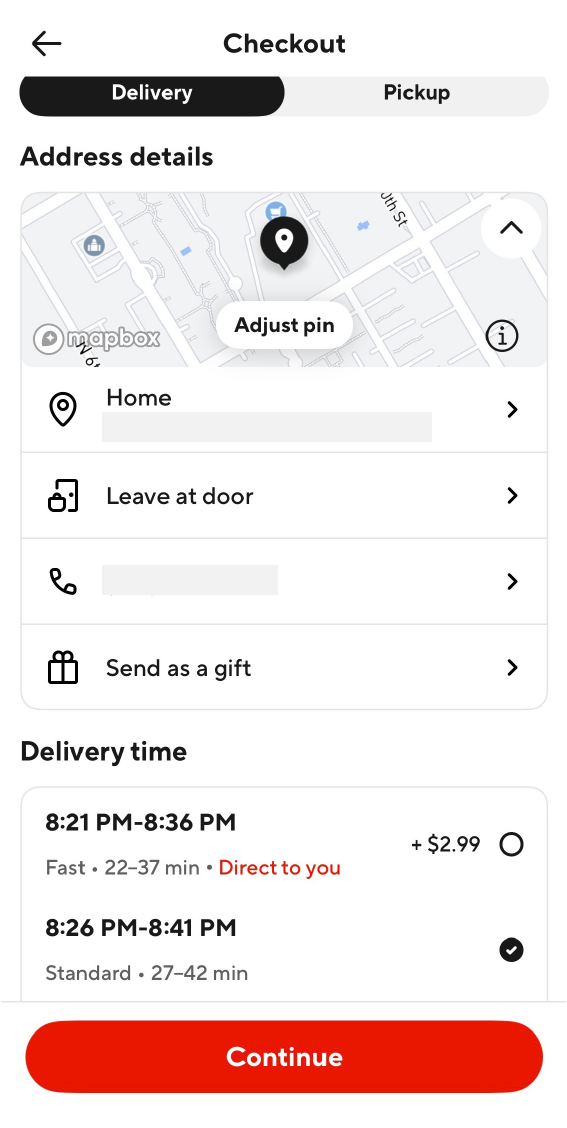}
        \caption{Online food delivery}
        \label{fig:screenshots_2}
    \end{subfigure}
    \caption{Examples of upfront pricing in dynamic matching platforms.}
\label{fig:screenshots}
\end{figure}

Pricing and matching are inherently interdependent: lowering prices can reduce revenue per match, but it also boosts arrival (conversion) rates and increases market thickness. Greater density, in turn, enables more efficient matching and lowers the cost per match. This naturally gives rise to a bi-level optimization problem. At the upper level, the platform determines optimal prices to maximize profit---often formulated as a concave revenue function minus total operational costs. Given the arrival (conversion) rates induced by these prices, the lower level executes dynamic matching to minimize costs. Under Markovian assumptions, the lower-level matching problem can often be formulated as a Markov Decision Process (MDP). In the MDP framework, the state of the system is defined as the number of demands of each type waiting in the system to be matched. When a demand arrives, the platform can (i) immediately match it with another existing demand, or (ii) keep it waiting in the system. However, the optimal policies in the MDP framework are, in general, intractable to obtain due to the curse of dimensionality. Therefore, many studies use fluid relaxations based on linear programs (LPs) to give bounds for the optimal value function \citep{collina2020dynamic, aouad2022dynamic, kessel2022,arnosti2025greedy}. 

In this paper, we utilize the cost-minimizing LP proposed by \citet{aouad2022dynamic} to model the lower-level matching cost, which provides a lower bound on the optimal cost of the MDP. While existing studies primarily focus on developing near-optimal matching policies using these LPs, our contribution lies in characterizing how the optimal cost function varies with the arrival rates. This is a crucial aspect of understanding and solving the upper-level pricing problem. To the best of our knowledge, this has not been addressed in prior work, where arrival rates are typically treated as fixed inputs. We now summarize our main results as follows.

\subsection{Overview of Our Results}

We begin with the extreme case of infinite patience, under which the pricing problem reduces to a well-behaved concave maximization problem (\Cref{prop:theta_zero}). With limited patience, however, characterizing the cost function becomes highly nontrivial because the arrival rates enter both the coefficients and the right-hand sides of the LP constraints, departing from classical parametric LP settings \citep{gal1972multiparametric, adler1992geometric}. Nevertheless, \Cref{prop:eos} reveals economies of scale in the cost function, which motivates our subsequent concavity analysis. Two key results, established in \Cref{lemma:n-weak-concave} and \Cref{prop:n-weak-concave-nonzero}, link weak concavity to nondegeneracy and strictly positive unmatched rates in the optimal LP solution. This connection allows us to certify weak concavity from partial optimal-solution structure, rather than via a full curvature analysis. The condition is also practically grounded, since strictly positive unmatched rates arise naturally in underlying dynamic stochastic matching systems with limited patience. 
Building on this result, we show that weak concavity is guaranteed when there are at most three demand types with a common patience level (\Cref{corl:n-weak-concave-same-theta}). More generally, for an arbitrary number of types, we find that (weak) concavity is more likely to hold when demands are either highly patient (\Cref{thm:n-weak-concave-same-theta}) or highly impatient (\Cref{prop:n-weak-concave-same-theta-upper-bound}), when patience levels are similar across types (\Cref{thm:n-weak-concave}), or when matching efficiencies are sufficiently heterogeneous (\Cref{thm:n-weak-concave-same-theta} and \Cref{thm:n-weak-concave}). We also discuss conditions under which the concavity of the cost function induced by the LP coincides with the concavity of the optimal value function of the corresponding MDP (\Cref{prop:n-concave-same-theta} and \Cref{prop:n-concave}). 

Although concavity of the cost function renders the pricing problem \eqref{equ:gn} nonconvex, it naturally leads to a \emph{difference-of-concave} formulation and motivates a Minorization-Maximization (MM) algorithm \citep{hunter2004tutorial}. A case study on the Chicago ridesharing dataset, involving thousands of rider types (origin-destination pairs), shows that MM substantially outperforms projected gradient methods, delivering better objective values with lower computational time. Moreover, whereas projected gradient methods are highly sensitive to stepsize choice, MM is robust across a wide range of settings and requires no stepsize tuning. This numerical stability, together with its fast convergence, makes MM a compelling algorithmic choice for solving such large-scale pricing problems in practice. We further show that our framework readily extends to several practical settings, including limited supply (e.g., a limited pool of drivers or couriers), multi-product variants (e.g., solo versus shared service), and the presence of matching disutility (e.g., detours and additional waiting time).

\subsection{Related Literature}

Our research is primarily related to two literature streams: (1) dynamic matching with abandonment, and (2) dynamic pricing and matching applications in online platforms.

\paragraph{Dynamic matching with abandonment.} 
The dynamic matching problem has attracted growing attention in recent literature. It concerns identifying optimal or near-optimal matchings among agents who arrive over time. Prior work studies this problem in either discrete-time or continuous-time settings, and LP-based fluid relaxations are often used to bound the optimal value function or the omniscient offline optimum, as well as to guide the design of online matching algorithms. Existing studies can also be categorized by whether matching is restricted to be bipartite. In what follows, we focus on the literature that studies the more general non-bipartite setting. For recent work in the bipartite setting, see \citet{hu2022dynamic}, \citet{kessel2022}, \citet{patel2024combinatorial}, and \citet{amanihamedani2025adaptive}.

In the discrete-time setting, \citet{gupta2024greedy}, \citet{kerimov2024dynamic}, and \citet{kerimov2025optimality} show that greedy-type algorithms achieve bounded regret under a \emph{general position gap} condition for the underlying static planning LP. This condition requires that the LP admit a nondegenerate optimal solution. Our weak-concavity conditions are related in spirit but differ in an essential way: our underlying LP is substantially more complex because arrival rates enter both the constraint coefficients and the right-hand sides, whereas in their formulations arrival rates appear only on the right-hand sides, reflecting the assumption that agents never abandon. See the discussion following \Cref{lemma:n-weak-concave} for details. A recent exception is \citet{eom2024batching}, who incorporate finite patience by assuming that agents depart after a fixed number of time steps. However, while their MDP model captures abandonment, their benchmark LP continues to assume infinite patience. 

Our work is related to the continuous-time literature initiated by \citet{akbarpour2020thickness}. In particular, \citet{collina2020dynamic, aouad2022dynamic, arnosti2025greedy} are aligned with our modeling approach, as they assume Poisson arrivals and exponential sojourn times. \citet{collina2020dynamic} and \citet{arnosti2025greedy} develop online algorithms with constant competitive ratios relative to the optimal offline (hindsight) policy, whereas \citet{aouad2022dynamic} provides guarantees relative to the optimal online policy. All three works derive fluid LP relaxations, but the formulation in \citet{aouad2022dynamic} bounds the optimal online policy and is therefore presumably tighter; moreover, it is the only one that explores a cost-minimization objective, which is most relevant for our pricing application. Accordingly, we adopt their formulation in this paper. It is important to note that none of the aforementioned works considers the upper-level pricing decision, which is the focus of our paper.

\paragraph{Applications of dynamic matching and pricing.}
Dynamic matching and its associated pricing problems have been studied extensively across a range of applications, particularly in ridesharing and on-demand delivery platforms. In ridesharing, \citet{ozkan2020dynamic} study bipartite matching between arriving passengers and available drivers and propose asymptotically optimal policies via a continuous linear program. \citet{feng2024two} further investigate joint pricing and bipartite matching, establishing a bounded competitive ratio. The recent work of \citet{yan2023pricing} is most closely related to ours, as it also considers non-bipartite matching and pricing for shared ride services. A key distinction is that \citet{yan2023pricing} focus on state-dependent dynamic pricing via an affine value function approximation, which is substantially more computationally intensive, whereas we study static upfront pricing through a fluid LP formulation. In on-demand delivery, \citet{ma2025potential} develop a potential-based greedy algorithm for efficient order pooling without considering pricing decisions. %

\smallskip
\paragraph{Organization}
The remainder of the paper is organized as follows. In \Cref{sec:prelim}, we introduce the model and preliminaries. In \Cref{sec:concavity}, we analyze concavity under homogeneous patience levels, and in \Cref{sec:diff_theta} we extend the analysis to heterogeneous patience. Applying these theoretical insights, \Cref{sec:pricing} develops an algorithmic approach to the price optimization problem, followed by several extensions in \Cref{sec:pricing_extension}. Finally, we conclude in \Cref{sec:conclusion}. All proofs, auxiliary results, and additional examples and computational experiments are deferred to the appendix.

\section{Preliminaries}
\label{sec:prelim}

Consider a system with $N$ distinct demand types, where $[N] = \{1, 2, \ldots, N\}$. Potential demands of type $i \in [N]$ are presented with a quoted price $p_i$, upon which they decide whether to join the platform. We assume that type-$i$ demand arrives according to an independent Poisson process with rate $\lambda_i > 0$, which is a function of the quoted price $p_i$. We assume a one-to-one correspondence between the arrival rate $\lambda_i$ and the quoted price $p_i$ for each demand type $i \in [N]$. This allows us to express the price as a function of the arrival rate, denoted by $p_i(\lambda_i)$. We also assume that the revenue function, $\lambda_i p_i(\lambda_i)$, is concave with respect to $\lambda_i$. Both assumptions are standard in the revenue management literature. 
\begin{assumption}
    There exists a one-to-one correspondence between $\lambda_i$ and $p_i$ for all $i \in [N]$. The function $\lambda_i p_i(\lambda_i)$ is concave with respect to $\lambda_i$.
\end{assumption}

Once admitted, a type-$i$ demand’s sojourn time in the system is exponentially distributed with rate $\theta_i$. Here, $\theta_i$ characterizes the \textit{patience} of type-$i$ demands---a smaller $\theta_i$ indicates a greater willingness to wait for a match.\footnote{The limited sojourn time can be interpreted either as agent patience or as the platform's willingness to wait for another matchable agent, which can enable it to extract higher profit with lower matching cost. A prominent example in ridesharing is Uber Express Pool, launched in 2018 \citep{uber_express}, where riders were asked to wait a few minutes (typically between $0$ and $5$ minutes) before their trips began, allowing the platform to form better matches with fewer detours and offer a more affordable shared-rides product.} The platform attempts to match demands before their sojourn times expire. Following \citet{huang2018match} and \citet{aouad2022dynamic}, when two demands are matched, we refer to the demand that arrives earlier as the \textit{active} demand and the one that arrives later as the \textit{passive} demand. When an active demand of type $i \in [N]$ is matched with a passive demand of type $j \in [N]$, a cost $c_{(i,j)} > 0$ is incurred. If a demand of type $i \in [N]$ remains unmatched when its patience runs out, a cost $c_{(i)} > 0$ is incurred. Consistent with \citet{yan2023pricing}, we make the following assumptions about these costs.
\begin{assumption} \label{assum:cost}
$c_{(i)} = c_{(i,i)} \leq c_{(i,j)} = c_{(j,i)},\ \forall i,j\in[N]$.
\end{assumption}

\Cref{assum:cost} states that when two demands of the same type $i \in [N]$ are matched, the resulting cost equals that of serving a single unmatched demand of type $i$. In ridesharing, food delivery, and freight logistics platforms, this means that matching two identical ODs incurs the same cost as serving either OD individually. The condition $c_{(i,j)} = c_{(j,i)}$ imposes symmetry and requires that the order of arrival---that is, which demand is active or passive---does not affect the matched cost. Additionally, $c_{(i,j)} \geq c_{(i)}$ requires that the cost of serving two matched demands is at least as great as serving a single demand alone. This is also a natural assumption in these applications, where a pooled trip must cover at least the distance required by one of the solo trips.

We also define the \textit{matching efficiency} between two types of demands as follows.

\begin{definition}
    The matching efficiency between demand types $i\in[N]$ and $j\in[N]$ is defined as $e_{i,j}=1-c_{(i,j)}/(c_{(i)}+c_{(j)})$.
\end{definition}

Matching efficiency quantifies the proportion of cost saved when two types of demands are matched. A low matching efficiency indicates greater heterogeneity between the demands---for example, when their OD pairs differ significantly in ridesharing. By \Cref{assum:cost}, $c_{(i,j)} \geq c_{(i)}$ and $c_{(i,j)} \geq c_{(j)}$, so $e_{i,j}$ is bounded above by $0.5$. Additionally, since $c_{(i,i)} = c_{(i)}$, we have $e_{i,i} = 0.5$ for all $i \in [N]$, implying that self-matching always yields the highest efficiency. The lower bound of $e_{i,j}$ can be negative when $c_{(i,j)} > c_{(i)} + c_{(j)}$, meaning that matching these two types of demands would result in higher costs than serving them individually---rendering such matches impractical in any scenario.

\subsection{Price Optimization Problem}

The platform’s objective is to maximize total profit by optimally setting prices and conducting matching for all demands. Given the one-to-one correspondence between $\lambda_i$ and $p_i$, the pricing problem can be formulated as finding the optimal vector $\boldsymbol{\lambda} = (\lambda_i)_{i \in [N]}$ that maximizes total profit:
\begin{align}
\max_{\boldsymbol{\lambda} \in \Lambda}~g(\boldsymbol{\lambda}) = \max_{\boldsymbol{\lambda} \in \Lambda}{\sum_{i\in[N]}~\lambda_ip_i(\lambda_i) - c(\boldsymbol{\lambda})} \label{equ:gn},
\end{align}
where $\Lambda$ denotes the feasible domain of $\boldsymbol{\lambda}$. In this study, we assume $\boldsymbol{\lambda}$ is defined as a compact interval $[\underline{\boldsymbol{\lambda}}, \overline{\boldsymbol{\lambda}}]$. The lower and upper bounds $\underline{\boldsymbol{\lambda}}, \overline{\boldsymbol{\lambda}}$ can be equivalently interpreted as minimum and maximum prices imposed on different demand types. For convenience, we assume $\underline{\boldsymbol{\lambda}} > \boldsymbol{0}$, where all vector inequalities in this paper are interpreted component-wise.

\begin{assumption} 
$\boldsymbol{\lambda}\in\Lambda=[\underline{\boldsymbol{\lambda}},\overline{\boldsymbol{\lambda}}]$, where $\underline{\boldsymbol{\lambda}} > \boldsymbol{0}$.
\end{assumption}

The term $c(\boldsymbol{\lambda})$ in \eqref{equ:gn} represents the optimal value (cost) function of the lower-level matching problem. The dynamic matching problem can be formulated as an MDP; we refer readers to Appendix~\ref{appendix:mdp} for the full formulation. In our study, instead of directly solving the MDP, we use the following LP to approximate $c(\boldsymbol{\lambda})$:
\begin{subequations} \label{equ:cn}
\begin{align} 
c(\boldsymbol{\lambda}) = \min_{\boldsymbol{x},\boldsymbol{y}} \quad& \sum_{i\in[N]} \sum_{j\in[N]} c_{(i,j)} x_{i,j} + \sum_{i\in[N]} c_{(i)} y_i \label{equ:cn-obj} \\
\text{s.t.} \quad & \sum_{j\in[N]} x_{j,i} + \sum_{j\in[N]} x_{i,j} + y_i = \lambda_i, & \forall i \in [N], \label{equ:cn-flow}\\
& \theta_i x_{i,j} \leq \lambda_j y_i, & \forall i,j \in [N], \label{equ:cn-bound}\\
& x_{i,j} \geq 0, & \forall i,j \in [N], \label{equ:cn-nonneg-x} \\
& y_i \geq 0, & \forall i \in [N], \label{equ:cn-nonneg-y}
\end{align}
\end{subequations}
where $\boldsymbol{x}:= (x_{i,j})_{i,j\in[N]}$ and $\boldsymbol{y}:= (y_{i})_{i\in[N]}$. 

Variable $x_{i,j}$ represents the match rate between type $i\in[N]$ active demand and type $j\in[N]$ passive demand, and $y_{i}$ represents the rate of type $i\in[N]$ demand unmatched. The objective function \eqref{equ:cn-obj} is the average total cost per unit of time. Constraints \eqref{equ:cn-flow} can be viewed as the flow balance equation, that is, in the steady state, the arrival rate of type-$i$ demand should be the summation of its active match rate, passive match rate, and unmatched rate. Constraints \eqref{equ:cn-bound} are the ratio constraints that give the lower bound of the unmatched rate $y_{i}$ in terms of $\lambda_j$, $x_{i,j}$, and $\theta_i$. Intuitively, this reflects that a type-$i$ demand may leave the system before the next type-$j$  demand arrives, with a probability of $\theta_i/(\theta_i + \lambda_j)$, thus $y_{i}$ should be a factor of at least $\theta_i/\lambda_j$ of the match rate $x_{i,j}$.
Lastly, \eqref{equ:cn-nonneg-x} and \eqref{equ:cn-nonneg-y} require that the variables are nonnegative.

This LP was originally introduced in \citet{aouad2022dynamic}, where it was shown that its optimal value serves as a lower bound on the expected cost achieved by the optimal matching policy. Correspondingly, using this LP, the optimal profit given in \eqref{equ:gn} provides an upper bound for the profit achievable by any dynamic matching policy. While \citet{aouad2022dynamic} treat $\boldsymbol{\lambda}$ as exogenously given, we instead view this LP as a parametric programming model in $\boldsymbol{\lambda} \in \Lambda$, and characterize how $c(\boldsymbol{\lambda})$ varies with $\boldsymbol{\lambda}$. This is a crucial aspect of understanding and solving the upper-level pricing problem \eqref{equ:gn}.

\subsection{(Weak) Concavity of the Cost Function}

We first observe that, as an extreme case, when demands have infinite patience (i.e., $\theta_i = 0$ for all $i \in [N]$), constraints \eqref{equ:cn-bound} become redundant. In this scenario, $\boldsymbol{\lambda}$ appears only on the right-hand side of constraint \eqref{equ:cn-flow}, and it is well known that $c(\boldsymbol{\lambda})$ becomes a convex, piecewise linear function \citep{adler1992geometric}. Moreover, under \Cref{assum:cost}, we can show that $c(\boldsymbol{\lambda})$ simplifies further to a linear form.

\begin{proposition} \label{prop:theta_zero}
    When $\theta_i=0$, $\forall i \in [N]$, $c(\boldsymbol{\lambda})$ is convex and linear on $\Lambda$.
\end{proposition}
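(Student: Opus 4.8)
The plan is to prove the stronger, explicit statement that when $\theta_i=0$ for all $i\in[N]$ the LP \eqref{equ:cn} has optimal value exactly $c(\boldsymbol{\lambda})=\frac{1}{2}\sum_{i\in[N]}c_{(i)}\lambda_i$, which is manifestly linear in $\boldsymbol{\lambda}$ on $\Lambda$. First I would record the reduction already hinted at in the text: with $\theta_i=0$ the constraints \eqref{equ:cn-bound} read $0\le\lambda_j y_i$, which hold automatically since $\lambda_j>0$ and $y_i\ge 0$, so they are redundant and \eqref{equ:cn} collapses to a transportation-type LP in which $\boldsymbol{\lambda}$ enters only through the right-hand side of the flow-balance constraints \eqref{equ:cn-flow}.

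For the upper bound I would exhibit the feasible solution that matches every agent with a copy of its own type: set $x_{i,i}=\lambda_i/2$ for all $i$, $x_{i,j}=0$ for $i\neq j$, and $y_i=0$. Then \eqref{equ:cn-flow} for type $i$ reads $x_{i,i}+x_{i,i}+0=\lambda_i$, which holds, nonnegativity is immediate, and by Assumption \ref{assum:cost} the objective \eqref{equ:cn-obj} equals $\sum_{i\in[N]}c_{(i,i)}x_{i,i}=\frac{1}{2}\sum_{i\in[N]}c_{(i)}\lambda_i$. Hence $c(\boldsymbol{\lambda})\le\frac{1}{2}\sum_{i\in[N]}c_{(i)}\lambda_i$.

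For the matching lower bound I would use \eqref{equ:cn-flow} to eliminate $\boldsymbol{y}$ from the objective: summing \eqref{equ:cn-flow} weighted by $c_{(i)}$ gives $\sum_{i,j}(c_{(i)}+c_{(j)})x_{i,j}+\sum_i c_{(i)}y_i=\sum_i c_{(i)}\lambda_i$, so for every feasible $(\boldsymbol{x},\boldsymbol{y})$ the objective rewrites as $\sum_i c_{(i)}\lambda_i-\sum_{i,j}\left(c_{(i)}+c_{(j)}-c_{(i,j)}\right)x_{i,j}$. It then suffices to show $\sum_{i,j}\left(c_{(i)}+c_{(j)}-c_{(i,j)}\right)x_{i,j}\le\frac{1}{2}\sum_i c_{(i)}\lambda_i$. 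The key inequality is $c_{(i,j)}\ge\max\{c_{(i)},c_{(j)}\}\ge\frac{1}{2}(c_{(i)}+c_{(j)})$, which is immediate from Assumption \ref{assum:cost}; it gives $c_{(i)}+c_{(j)}-c_{(i,j)}\le\frac{1}{2}(c_{(i)}+c_{(j)})$, and since $x_{i,j}\ge 0$ and $\sum_j(x_{i,j}+x_{j,i})=\lambda_i-y_i\le\lambda_i$ by \eqref{equ:cn-flow}, a one-line reindexing yields $\sum_{i,j}\left(c_{(i)}+c_{(j)}-c_{(i,j)}\right)x_{i,j}\le\frac{1}{2}\sum_i c_{(i)}\sum_j(x_{i,j}+x_{j,i})\le\frac{1}{2}\sum_i c_{(i)}\lambda_i$. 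Therefore $c(\boldsymbol{\lambda})\ge\frac{1}{2}\sum_i c_{(i)}\lambda_i$, and combined with the upper bound, $c(\boldsymbol{\lambda})=\frac{1}{2}\sum_i c_{(i)}\lambda_i$, which is linear on $\Lambda$. (Equivalently, one can write the LP dual of \eqref{equ:cn} and verify that $u_i=c_{(i)}/2$ is dual-feasible with dual objective $\frac{1}{2}\sum_i c_{(i)}\lambda_i$, giving the lower bound by weak duality.)

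I expect the only substantive point to be the observation that Assumption \ref{assum:cost} forces $c_{(i,j)}\ge\frac{1}{2}(c_{(i)}+c_{(j)})$, i.e.\ no cross-type match can ever improve on two self-matches; this is exactly what collapses the parametric LP to a single closed-form linear expression. Without Assumption \ref{assum:cost} the value would still be the optimal value of a parametric transportation LP---piecewise linear and convex in $\boldsymbol{\lambda}$ by \cite{adler1992geometric}---but generally not linear, so the assumption is doing real work here.
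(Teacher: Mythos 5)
Your proof is correct. It reaches the same explicit conclusion as the paper---namely $c(\boldsymbol{\lambda})=\tfrac{1}{2}\sum_{i\in[N]}c_{(i)}\lambda_i$---but by a different route. The paper argues locally: it takes an optimal solution and performs two $\varepsilon$-exchange perturbations (shifting mass from $y_i$ into $x_{i,i}$, and from $x_{i,j}$ with $i\neq j$ into $x_{i,i}$ and $x_{j,j}$) to show that some optimal solution has $\boldsymbol{y}^*=\boldsymbol{0}$ and no cross-matching, and then reads off the value. You instead give a global two-sided bound: the all-self-match solution certifies the upper bound, and eliminating $\boldsymbol{y}$ via the $c_{(i)}$-weighted sum of the flow constraints, combined with the termwise inequality $c_{(i)}+c_{(j)}-c_{(i,j)}\le\tfrac{1}{2}(c_{(i)}+c_{(j)})$, certifies the lower bound (equivalently, weak duality with $u_i=c_{(i)}/2$). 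Both arguments hinge on exactly the same consequence of Assumption \ref{assum:cost}, $c_{(i,j)}\ge\tfrac{1}{2}(c_{(i)}+c_{(j)})$; yours has the small advantage of avoiding perturbation bookkeeping and of producing the closed-form value with an explicit certificate, while the paper's exchange argument additionally exhibits the structure of an optimal solution (which it reuses in later proofs). Incidentally, your $x_{i,i}=\lambda_i/2$ is the correct feasible point; the paper's statement ``$x_{i,i}^*=\lambda_i$'' is a typo, since the flow constraint forces $2x_{i,i}^*=\lambda_i$ when $y_i^*=0$ and there is no cross-matching.
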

Therefore, in this extreme case, the pricing problem \eqref{equ:gn} reduces to a convex optimization problem and can be efficiently solved.

However, when constraint \eqref{equ:cn-bound} is imposed, analyzing $c(\boldsymbol{\lambda})$ becomes significantly more challenging. In this case, $\boldsymbol{\lambda}$ also appears in the coefficients of constraint \eqref{equ:cn-bound}, making the LP fundamentally different from the classical parametric LP literature, where the parameter appears only on the right-hand side of the constraints \citep{gal1972multiparametric,adler1992geometric}. In fact, as shown in the following proposition, the cost function exhibits economies of scale, implying that $c(\boldsymbol{\lambda})$ tends to be concave rather than convex.

\begin{proposition}[Economies of scale]\label{prop:eos}
 For all $\alpha > 1$,  $\boldsymbol{\lambda}\in\Lambda$, if $\alpha\boldsymbol{\lambda} \in \Lambda$, then $c(\alpha\boldsymbol{\lambda)}/\| \alpha\boldsymbol{\lambda} \| \leq  c(\boldsymbol{\lambda})/\| \boldsymbol{\lambda} \|$,
where $\| \cdot \|$ is the Euclidean norm of the vector.
\end{proposition}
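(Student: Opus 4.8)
The plan is to reduce the stated inequality to the submultiplicativity bound $c(\alpha\boldsymbol{\lambda}) \le \alpha\, c(\boldsymbol{\lambda})$ and then establish that bound by a one-line scaling argument on feasible solutions of the LP \eqref{equ:cn}.

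\emph{Reduction.} Since $\alpha>1>0$ we have $\|\alpha\boldsymbol{\lambda}\| = \alpha\|\boldsymbol{\lambda}\|$, and since $\boldsymbol{\lambda}\ge\underline{\boldsymbol{\lambda}}>\boldsymbol{0}$ we have $\|\boldsymbol{\lambda}\|>0$. Hence $c(\alpha\boldsymbol{\lambda})/\|\alpha\boldsymbol{\lambda}\| \le c(\boldsymbol{\lambda})/\|\boldsymbol{\lambda}\|$ is equivalent to $c(\alpha\boldsymbol{\lambda}) \le \alpha\, c(\boldsymbol{\lambda})$, so it suffices to prove the latter.

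\emph{Scaling construction.} Let $(\boldsymbol{x}^\star,\boldsymbol{y}^\star)$ be an optimal solution of the LP defining $c(\boldsymbol{\lambda})$; such a solution exists because the LP is feasible (take $\boldsymbol{x}=\boldsymbol{0}$, $y_i=\lambda_i$) and its objective is bounded below by $0$. I would propose $(\alpha\boldsymbol{x}^\star,\alpha\boldsymbol{y}^\star)$ as a candidate for the LP defining $c(\alpha\boldsymbol{\lambda})$ and verify feasibility constraint by constraint. The flow-balance equalities \eqref{equ:cn-flow} at parameter $\alpha\boldsymbol{\lambda}$ hold because multiplying \eqref{equ:cn-flow} through by $\alpha$ yields exactly them; nonnegativity \eqref{equ:cn-nonneg-x}--\eqref{equ:cn-nonneg-y} is immediate; and for the coupling constraints \eqref{equ:cn-bound} the requirement at parameter $\alpha\boldsymbol{\lambda}$ reads $\theta_i(\alpha x^\star_{i,j}) \le (\alpha\lambda_j)(\alpha y^\star_i)$, i.e. $\theta_i x^\star_{i,j} \le \alpha\lambda_j y^\star_i$, which follows from the original feasibility $\theta_i x^\star_{i,j}\le\lambda_j y^\star_i$ together with $\alpha>1$ and $\lambda_j y^\star_i\ge 0$. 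Thus $(\alpha\boldsymbol{x}^\star,\alpha\boldsymbol{y}^\star)$ is feasible for the LP at $\alpha\boldsymbol{\lambda}$, its objective value \eqref{equ:cn-obj} equals $\alpha\bigl(\sum_{i,j}c_{(i,j)}x^\star_{i,j}+\sum_i c_{(i)}y^\star_i\bigr)=\alpha\,c(\boldsymbol{\lambda})$, and minimality gives $c(\alpha\boldsymbol{\lambda})\le\alpha\,c(\boldsymbol{\lambda})$.

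\emph{Where the content sits.} There is no genuinely hard step; the only point worth flagging is the scaling mismatch in the bilinear constraint \eqref{equ:cn-bound} --- replacing $\boldsymbol{\lambda}$ by $\alpha\boldsymbol{\lambda}$ inflates its right-hand side by a factor $\alpha^2$ while the left-hand side scales only linearly --- which is exactly the structural origin of the economies of scale. I also note that this argument invokes neither Assumption \ref{assum:cost} nor the matching-efficiency structure; those would enter only if one wanted to convert the slack created in \eqref{equ:cn-bound} into additional matching and thereby obtain a strict inequality, whereas the weak form stated in the proposition needs only the scaled solution above.
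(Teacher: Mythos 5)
Your proposal is correct and follows essentially the same route as the paper: both reduce the claim to $c(\alpha\boldsymbol{\lambda}) \leq \alpha\, c(\boldsymbol{\lambda})$ via $\|\alpha\boldsymbol{\lambda}\| = \alpha\|\boldsymbol{\lambda}\|$, and both verify that scaling an optimal solution by $\alpha$ remains feasible at $\alpha\boldsymbol{\lambda}$, with the bilinear constraint \eqref{equ:cn-bound} gaining slack because its right-hand side scales as $\alpha^2$. Your remark identifying that quadratic-versus-linear scaling mismatch as the structural source of the economies of scale is a nice touch, but the argument itself is the paper's.
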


\Cref{prop:eos} states that the average cost per demand decreases when the arrival rates are scaled proportionally. This implies that $c(\boldsymbol{\lambda})$ tends to bend downward (or remain linear) as the arrival rate scales—a behavior typically associated with concavity rather than convexity. This property motivates us to conduct a more rigorous analysis of the concavity of $c(\boldsymbol{\lambda})$. Although concavity implies that the profit maximization problem \eqref{equ:gn} is not convex, it yields a concave-minus-concave structure that is useful to specific algorithmic techniques discussed later in \Cref{sec:pricing}.

Besides the notion of standard concavity, we also consider the notion of \textit{weak concavity} in this paper, a relaxed alternative defined below. See also Proposition 4.3 in \citet{vial1983strong} for an equivalent definition.

\smallskip

\begin{definition}[Weak concavity]
$c(\boldsymbol{\lambda})$ is said to be weakly concave on $\Lambda$ if there exists a finite value $\rho\geq0$, such that $c(\boldsymbol{\lambda}) - \rho \|\boldsymbol{\lambda}\|^2/2$ is concave on $\Lambda$.\footnote{Note that, even for bounded functions, the weakly concave property is nontrivial; for example, one can verify that $|x|$ (for $-1 \leq x \leq 1$) and $-\sqrt{x}$ (for $0 \leq x \leq 1$) are not weakly concave.}
\end{definition}

\smallskip

If $c(\boldsymbol{\lambda})$ is weakly concave, it can be written as the sum of a concave function, $c(\boldsymbol{\lambda}) - \rho \|\boldsymbol{\lambda}\|^2/2$, and a convex function, $\rho \|\boldsymbol{\lambda}\|^2/2$. This decomposition allows us to express the total profit as $g(\boldsymbol{\lambda}) = (\sum_{i\in[N]} \lambda_i p_i(\lambda_i) - \rho \|\boldsymbol{\lambda}\|^2/2) - (c(\boldsymbol{\lambda}) - \rho \|\boldsymbol{\lambda}\|^2/2)$, thereby restoring the concave-minus-concave structure (analogous to the structure obtained when $c(\boldsymbol{\lambda})$ is concave in the standard sense). When defining weak concavity, we restrict attention to the quadratic term
$\rho\|\boldsymbol{\lambda}\|^2/2 = (\rho/2)\sum_{i \in [N]} \lambda_i^2$,
rather than higher-order terms such as
$(\rho/2)\sum_{i \in [N]} \lambda_i^\alpha$ with $\alpha > 2$.
This restriction is without loss of generality since the feasible set $\Lambda$ is bounded.\footnote{To see this, consider the case when $c(\boldsymbol{\lambda})-\rho (\sum_{i\in[N]} \lambda_i^\alpha)/2$ is concave on $\Lambda$ for some $\alpha > 2$. One can verify that $c(\boldsymbol{\lambda})-
\hat{\rho} (\sum_{i\in[N]} \lambda_i^2)/2 = [c(\boldsymbol{\lambda})-\rho (\sum_{i\in[N]} \lambda_i^\alpha)/2] + [\rho (\sum_{i\in[N]} \lambda_i^\alpha)/2 - \hat{\rho} (\sum_{i\in[N]} \lambda_i^2)/2]$ is also concave on $\Lambda$ for $\hat{\rho} = \rho \alpha (\alpha-1) \max_{i\in[N]} \overline{\lambda}_i^{\alpha-2}$.}

\smallskip

Interestingly, our first result uncovers a structural connection between weak concavity and the nondegeneracy of optimal basic feasible solutions (BFSs).

\begin{lemma} \label{lemma:n-weak-concave}
    $c(\boldsymbol{\lambda})$ is weakly concave on $\Lambda$, if $\forall \boldsymbol{\lambda}\in\Lambda$, all optimal BFSs are nondegenerate.
\end{lemma}

The proof of \Cref{lemma:n-weak-concave} begins by establishing local weak concavity, showing that
$c(\boldsymbol{\lambda})$ is weakly concave in a sufficiently small neighborhood of any point
$\hat{\boldsymbol{\lambda}} \in \Lambda$.
Specifically, we examine the optimal basic feasible solutions (BFSs) of program~\eqref{equ:cn} at
$\hat{\boldsymbol{\lambda}}$.
Under the assumption that all optimal BFSs are nondegenerate, these solutions remain BFSs (with one remaining optimal), and no new basic solution can become optimal within a sufficiently small neighborhood of $\hat{\boldsymbol{\lambda}}$. Consequently, $c(\boldsymbol{\lambda})$ can be expressed as the pointwise minimum of the value functions induced by these BFSs. Moreover, we show that the value function associated with each such BFS is bounded and weakly concave. It follows that $c(\boldsymbol{\lambda})$ is locally weakly concave. Finally, we invoke \Cref{lemma:n-weak-concave-1} in Appendix~\ref{proof:n-weak-concave} to extend this local property to the global weak concavity of $c(\boldsymbol{\lambda})$.

We note that the condition imposed in \Cref{lemma:n-weak-concave} is closely related to several variants of the \emph{general position gap} (GPG) condition studied in \citet{kerimov2024dynamic,kerimov2025optimality,gupta2024greedy} and \citet{wei2023constant}. In \citet{kerimov2024dynamic,kerimov2025optimality}, the GPG condition requires the existence of a unique nondegenerate optimal solution, which is stronger than our assumption, as we allow multiple optimal solutions at a given $\boldsymbol{\lambda}$. In contrast, \citet{gupta2024greedy} permits multiple optimal basic feasible solutions and requires only one of them to be nondegenerate, which is weaker than our condition, since we require all optimal BFSs to be nondegenerate; these assumptions are further refined in \citet{wei2023constant}, which requires the dual problem to admit the same unique optimal solution throughout a neighborhood of any given $\boldsymbol{\lambda}$. A key distinction is that the linear programs studied in the aforementioned works assume infinite agent patience, so $\boldsymbol{\lambda}$ appears only on the right-hand side of the constraints, whereas in our setting $\boldsymbol{\lambda}$ enters the coefficients of constraint~\eqref{equ:cn-bound}, causing the dual feasible region itself to vary with $\boldsymbol{\lambda}$ and making the problem substantially more challenging.

Nevertheless, the following proposition shows that it is sufficient for a BFS to be nondegenerate if its solution has non-zero unmatched rates.
\begin{proposition} \label{prop:n-weak-concave-nonzero}
    A basic feasible solution to \eqref{equ:cn} is nondegenerate if $\boldsymbol{y}>\boldsymbol{0}$.
\end{proposition}

Given \Cref{lemma:n-weak-concave} and \Cref{prop:n-weak-concave-nonzero}, to establish the weak concavity of $c(\boldsymbol{\lambda})$, it suffices to identify conditions under which every optimal solution satisfies $\boldsymbol{y}^* > \boldsymbol{0}$.%
\footnote{The optimal solutions to \eqref{equ:cn} depend on $\boldsymbol{\lambda}$ and can be written as $\boldsymbol{x}^*(\boldsymbol{\lambda})=(x_{i,j}^*(\boldsymbol{\lambda}))_{i,j\in[N]}$ and $\boldsymbol{y}^*(\boldsymbol{\lambda})=(y_i^*(\boldsymbol{\lambda}))_{i\in[N]}$. For notational simplicity, we suppress the dependence on $\boldsymbol{\lambda}$ and denote them by $\boldsymbol{x}^*=(x_{i,j}^*)_{i,j\in[N]}$ and $\boldsymbol{y}^*=(y_i^*)_{i\in[N]}$ throughout the analysis.}
This observation allows us to bypass the complexity and potential nonsmoothness of $c(\boldsymbol{\lambda})$. Moreover, focusing on the case $\boldsymbol{y}^* > \boldsymbol{0}$ is also practically justified: the scenario $y_i^* = 0$ is an artifact of the fluid LP and cannot arise in the original MDP framework, since with limited demand patience (i.e., $\theta_i > 0$ for all $i \in [N]$), the unmatched rate is never exactly zero.

\section{Homogeneous Patience Levels} \label{sec:concavity}

In this section, we begin to analyze the (weak) concavity of the value function $c(\boldsymbol{\lambda})$. We first restrict our attention to the setting where all demands share a homogeneous patience level $\theta$ (i.e., $\theta_i=\theta$ for all $i\in[N]$). The case with heterogeneous patience levels is addressed in \Cref{sec:diff_theta}.

\subsection{Analysis of One or Two Types of Demands} \label{sec:n2}

In the simplest case where there is only one type of demand (i.e., $N=1$), the inequality in \eqref{equ:cn-bound} is always binding at optimality. Intuitively, to minimize costs, the platform must maximize the matching rate to prevent the accumulation of unmatched demands. This structural property yields closed-form expressions for $x_{1,1}^*$, $y_1^*$, as stated in the following lemma.
\begin{lemma} \label{lemma:n1-solution}
When $N=1$, the optimal solution to \eqref{equ:cn} is given by $y_1^*=\lambda_1\theta/(\theta+2\lambda_1)$ and $x_{1,1}^* = \lambda_1^2/(\theta+2\lambda_1)$.
\end{lemma}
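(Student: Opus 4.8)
The plan is to solve the $N=1$ instance of \eqref{equ:cn} in closed form and then differentiate the resulting value function. First I would invoke Assumption \ref{assum:cost}, which for $N=1$ gives $c_{(1)}=c_{(1,1)}$, so the objective \eqref{equ:cn-obj} collapses to $c_{(1,1)}(x_{1,1}+y_1)$. The flow-balance constraint \eqref{equ:cn-flow} becomes $2x_{1,1}+y_1=\lambda_1$, so $y_1=\lambda_1-2x_{1,1}$ and the objective simplifies to $c_{(1,1)}(\lambda_1-x_{1,1})$; since $c_{(1,1)}>0$, minimizing cost is equivalent to making $x_{1,1}$ as large as the constraints allow.

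Next I would determine which constraint caps $x_{1,1}$. Substituting $y_1=\lambda_1-2x_{1,1}$, the nonnegativity $y_1\ge 0$ gives $x_{1,1}\le \lambda_1/2$, while the patience constraint \eqref{equ:cn-bound}, namely $\theta_1 x_{1,1}\le \lambda_1 y_1=\lambda_1(\lambda_1-2x_{1,1})$, rearranges to $x_{1,1}\le \lambda_1^2/(\theta_1+2\lambda_1)$. A one-line comparison (equivalently $2\lambda_1\le \theta_1+2\lambda_1$) shows the latter bound is the tighter of the two, and it is nonnegative, so the constraint $x_{1,1}\ge 0$ is automatically satisfied. Hence the unique optimizer is $x_{1,1}^*=\lambda_1^2/(\theta_1+2\lambda_1)$ and $y_1^*=\lambda_1-2x_{1,1}^*=\lambda_1\theta_1/(\theta_1+2\lambda_1)$, matching the statement.

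Finally, substituting back yields the explicit value function $c(\lambda_1)=c_{(1,1)}(\lambda_1-x_{1,1}^*)=c_{(1,1)}\,\lambda_1(\theta_1+\lambda_1)/(\theta_1+2\lambda_1)$, and concavity follows by differentiating twice on $\Lambda$: I expect $c''(\lambda_1)=-2c_{(1,1)}\theta_1^2/(\theta_1+2\lambda_1)^3\le 0$, with equality only in the degenerate case $\theta_1=0$ (consistent with Proposition \ref{prop:theta_zero}). There is no genuine obstacle here; the only points requiring care are the bookkeeping check that the patience constraint, not $y_1\ge 0$, is active, and the observation that $\lambda_1$ enters quadratically through the coefficient in \eqref{equ:cn-bound} — which is exactly what turns the value function nonlinear (and here strictly concave) compared with the $\theta=0$ case.
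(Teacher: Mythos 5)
Your proof is correct and follows essentially the same route as the paper's: both reduce the problem to showing that the patience constraint $\theta_1 x_{1,1}\le \lambda_1 y_1$ is the one that binds (the paper via a perturbation/contradiction argument, you via eliminating $y_1$ and maximizing $x_{1,1}$ directly), and both arrive at the same closed form $c(\lambda_1)=c_{(1)}\lambda_1(\theta_1+\lambda_1)/(\theta_1+2\lambda_1)$ with second derivative $-2c_{(1)}\theta_1^2/(\theta_1+2\lambda_1)^3\le 0$. The differences are purely presentational.
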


Given that $c(\lambda_1) = c_{(1)} (x_{1,1}^* + y_1^*) = c_{(1)}\lambda_1(\theta+\lambda_1)/(\theta+2\lambda_1)$ is smooth in this setting, we can directly establish concavity via the second-order condition.

\begin{proposition} \label{prop:n1-concave}
 When $N=1$, $c(\lambda_1)$ is concave on $\Lambda$.
\end{proposition}
As implied by \Cref{lemma:n1-solution}, when the arrival rate $\lambda_1$ increases, the ratio $x_{1,1}^*/\lambda_1$ increases and asymptotically approaches $1/2$. This behavior reflects a market-thickening effect, whereby matching opportunities improve with demand density. Consequently, the marginal cost $\mathrm{d} c(\lambda_1) / \mathrm{d} \lambda_1$ decreases as $\lambda_1$ grows, establishing that $c(\lambda_1)$ is concave on $\Lambda$.

Next, we consider a system with two demand types ($N=2$) sharing a homogeneous patience level $\theta$. Because the two types can be cross-matched, the total cost becomes a sum of heterogeneous pairwise terms involving $c_{(1)}, c_{(2)}$, and $c_{(1,2)}$. This introduces complexity, rendering $c(\boldsymbol{\lambda})$ potentially non-smooth. To address this, we analyze the structure of the optimal BFS by identifying which constraints among \eqref{equ:cn-bound}--\eqref{equ:cn-nonneg-y} are binding as $\lambda_1$ and $\lambda_2$ vary. For ease of exposition, we define the following function: %
\begin{align*}
    \Delta_1(\lambda_1, \lambda_2) & := c_{(1)}\frac{\theta+\lambda_1}{\theta+2\lambda_1} + c_{(2)}\frac{\theta+\lambda_2}{\theta+2\lambda_2} - c_{(1,2)}.
\end{align*}
The following lemma shows that there exist two types of optimal BFSs, determined by the sign of $\Delta_1(\lambda_1, \lambda_2)$.
\begin{lemma} \label{lemma:n2-solution-same-theta} When $N=2$ and $\theta_1=\theta_2=\theta$, \Cref{tab:n2-solution-same-theta} gives an optimal solution to program \eqref{equ:cn}. In addition, all optimal solutions to \eqref{equ:cn} satisfy $y_1^*, y_2^*>0$.
\begin{table}[ht]
\scriptsize
\centering
\caption{Characterization of optimal solutions to program~\eqref{equ:cn} when $N=2$ and $\theta_1=\theta_2=\theta$.}
\label{tab:n2-solution-same-theta}
\resizebox{\textwidth}{!}{%
\begin{tabular}{llllllll}
\toprule
\multirow{2}{*}{Case} & \multirow{2}{*}{$\Delta_1(\lambda_1, \lambda_2)$} & \multicolumn{6}{c}{Optimal Solutions} \\ \cline{3-8} 
 & & $y_1^*$ & $y_2^*$ & $x_{1,1}^*$ & $x_{1,2}^*$ & $x_{2,1}^*$ & $x_{2,2}^*$ \\ \hline
(i) & $\leq 0$ & $ =\lambda_1 \theta/(\theta+2\lambda_1)$  & $ =\lambda_2 \theta/(\theta+2\lambda_2)$  &$=\lambda_1y_1^*/\theta$& $= 0$       & $= 0$       &$=\lambda_2y_2^*/\theta$\\
(ii) & $> 0$ & $ = \lambda_1 \theta/(\theta+2\lambda_1+2\lambda_2)$  & $= \lambda_2 \theta/(\theta+2\lambda_1+2\lambda_2)$  & $=\lambda_1y_1^*/\theta$       &$=\lambda_2y_1^*/\theta$&$=\lambda_1y_2^*/\theta$&$=\lambda_2 y_2^*/\theta$\\
\bottomrule
\end{tabular}
}
\end{table}
\end{lemma}
As detailed in \Cref{tab:n2-solution-same-theta}, when $\Delta_1(\lambda_1, \lambda_2) \leq 0$, the optimal solution suppresses cross-matching ($x_{1,2}^*=x_{2,1}^*=0$), relying entirely on self-matching. Conversely, when $\Delta_1(\lambda_1, \lambda_2) > 0$, all ratio constraints in \eqref{equ:cn-bound} bind, and the system utilizes all matching pairs.

To further interpret this lemma, note that $\Delta_1(\lambda_1, \lambda_2)$ can be equivalently written as
$$
\Delta_1(\lambda_1, \lambda_2)=c_{(1)}\left( e_{1,2} - \frac{\lambda_1}{\theta+2\lambda_1}\right) + c_{(2)} \left( e_{1,2} - \frac{\lambda_2}{\theta+2\lambda_2} \right),
$$
where $e_{1,2}$ is the efficiency of cross-matching. Since $\Delta_1(\lambda_1, \lambda_2)$ is increasing in $e_{1,2}$, the condition $\Delta_1(\lambda_1, \lambda_2) \leq 0$ is more likely to hold when $e_{1,2}$ is small. In such cases, cross-matching does not yield sufficient cost savings, making $x_{1,2}^*=x_{2,1}^*=0$ optimal. When $e_{1,2}$ is sufficiently large, both cross- and self-matching are efficient, and it becomes optimal to utilize both types of matching. Likewise, the condition $\Delta_1(\lambda_1, \lambda_2) \leq 0$ is more likely to hold for large arrival rates (large $\lambda_1, \lambda_2$) or high patience (small $\theta$). Intuitively, when demands have high arrival rates or are very patient, the platform can already achieve a significant extent of matching through self-matching alone, and thus cross-matching becomes unnecessary.

\Cref{lemma:n2-solution-same-theta} directly implies, via \Cref{prop:n-weak-concave-nonzero}, that
$c(\boldsymbol{\lambda})$ is weakly concave as all unmatched rates are nonzero.
Moreover, exploiting the explicit form of the optimal solution in
\Cref{lemma:n2-solution-same-theta} and the fact that pointwise minimization preserves concavity,
we can further show that $c(\boldsymbol{\lambda})$ remains concave when $N=2$, even though it is no longer smooth.

\begin{proposition} \label{prop:n2-concave-same-theta}
    When $N=2$ and $\theta_1=\theta_2=\theta$, $c(\boldsymbol{\lambda})$ is concave on $\Lambda$.
\end{proposition}

Moreover, we show that the optimal match rates characterized in
\Cref{lemma:n1-solution} and \Cref{lemma:n2-solution-same-theta}
can be implemented by feasible dynamic matching policies.
Specifically, when $N=1$, or when $N=2$ with $\Delta_1(\lambda_1,\lambda_2) > 0$,
a greedy policy that immediately matches any available pair is optimal. When $\Delta_1(\lambda_1,\lambda_2) \le 0$, the optimal policy still greedily performs
self-matching but prohibits cross-matching. 

\begin{proposition}\label{prop:n2-tight}
When $N \le 2$ and $\theta_i = \theta$ for all $i \in [N]$,
$c(\boldsymbol{\lambda})$ coincides
with the optimal value function of the corresponding MDP.
\end{proposition}

\subsection{Analysis of Multiple Types of Demands} \label{sec:nn}

While \Cref{prop:n1-concave} and \Cref{prop:n2-concave-same-theta} establish concavity in the two simple cases $N=1$ and $N=2$, these results are difficult to generalize because fully characterizing the optimal solutions and obtaining an explicit expression for $c(\boldsymbol{\lambda})$ become intractable as $N$ grows. Indeed, we can construct counterexamples demonstrating non-concavity when $N=3$ with homogeneous patience; see \Cref{example:non-concave-same-theta} for details. Consequently, for general $N$, we do not limit our analysis to standard concavity; we also examine the notion of \emph{weak} concavity. Specifically, our approach is to identify sufficient conditions under which the optimal unmatched rates satisfy $\boldsymbol{y}^* > \boldsymbol{0}$, thereby enabling the application of \Cref{prop:n-weak-concave-nonzero}.

We first introduce the concept of \textit{critical matching efficiency}. Given any type $i$, we sort its corresponding matching efficiencies $e_{i,1}, e_{i,2}, \cdots, e_{i,N}$ in descending order and denote the sorted result by $e_{i,(1)} \geq e_{i,(2)} \geq \cdots \geq e_{i,(N)}$, where by definition $e_{i,(1)}=e_{i,i}=0.5$. For $k\in \mathbb{Z}_{>0}$, we then define the $k$-th critical matching efficiency as
$$
e_{(k)} := 
\begin{cases}
\max\limits_{i \in [N]} \left\{ e_{i,(k)} \right\}, & \text{if } k \leq N, \\
0, & \text{if } k > N,
\end{cases}
$$
that is, when $k \leq N$, $e_{(k)}$ is the maximum $k$-th largest matching efficiency among all demand types. Since each demand has a self-matching efficiency of $0.5$, we always have $e_{(1)} = 0.5$. When $k \geq 2$, for any demand of type $i \in [N]$, there are at most $k-2$ other demand types (excluding self-match type $i$) with a matching efficiency greater than $e_{(k)}$.\footnote{In the case of homogeneous patience levels, we assume without loss of generality that $e_{(k)} < 0.5$ for $k\geq2$. This assumption is justified because any two types sharing a perfect matching efficiency (e.g., identical OD pairs) can be aggregated into a single type.} For notational convenience, we also define $e_{(k)} = 0$ for all $k > N$. {It can be easily verified that $e_{(k)}$ is non-increasing in $k\in \mathbb{Z}_{>0}$.} 

\smallskip

Based on the above definition, we now state the result concerning weak concavity.
\begin{theorem} \label{thm:n-weak-concave-same-theta} 
 When $\theta_i = \theta$, $\forall i\in[N]$, $c(\boldsymbol{\lambda})$ is weakly concave on $\Lambda$ if
$
 \underline{\lambda}_i > \theta {e_{(4)}}/({1-2e_{(4)}}),\ \forall i \in [N].
$
\end{theorem}
By imposing a lower bound on $\underline{\lambda}_i$, \Cref{thm:n-weak-concave-same-theta} ensures that weak concavity holds when demand arrival rates are sufficiently high (or equivalently, when price upper bounds are sufficiently low). This lower bound depends on $\theta$, the homogeneous patience level of demands, as well as $e_{(4)}$, the 4th critical matching efficiency.  Relating $\underline{\lambda}_i$ to $e_{(4)}$ allows us to prove that each demand type is matched with at most two other types in the optimal solutions, thereby guaranteeing the dominance of self-matching and strictly positive unmatched rates (i.e., $\boldsymbol{y}^* > \boldsymbol{0}$). 

From \Cref{thm:n-weak-concave-same-theta}, we immediately obtain the following corollary for the case $N \le 3$. When $N \le 3$, we have $e_{(4)} = 0$ by definition, and hence the right-hand side of the bound in \Cref{thm:n-weak-concave-same-theta} vanishes. As a result, weak concavity always holds when there are at most three demand types.

\begin{corollary} \label{corl:n-weak-concave-same-theta}
    When $N\leq3$ and $\theta_i = \theta$, $\forall i\in[N]$, $c(\boldsymbol{\lambda})$ is weakly concave on $\Lambda$.
\end{corollary}

While \Cref{thm:n-weak-concave-same-theta} establishes weak concavity, we can also identify a stronger condition that guarantees standard concavity, as well as the tightness of the LP.

\begin{proposition} \label{prop:n-concave-same-theta}
When $\theta_i = \theta$ for all $i \in [N]$, the function $c(\boldsymbol{\lambda})$ is concave on $\Lambda$ and coincides with the optimal value function of the corresponding MDP if
$
\underline{\lambda}_i > {\theta\, e_{(3)}}/({1 - 2 e_{(3)}}),\ \forall i \in [N].
$
\end{proposition}

\Cref{prop:n-concave-same-theta} provides a lower bound on $\underline{\lambda}_i$ under which standard concavity holds. This bound is structurally similar to that in \Cref{thm:n-weak-concave-same-theta}, but differs in its dependence on $e_{(3)}$ rather than $e_{(4)}$, thereby imposing a stricter requirement to guarantee standard concavity. The reliance on $e_{(3)}$ ensures that demand of type $i$ is matched with at most one other type, which allows the problem to be decomposed into a collection of two-type ($N=2$) subproblems. Consequently, the concavity result in \Cref{prop:n2-concave-same-theta} extends to this setting.

 The lower bounds in both \Cref{thm:n-weak-concave-same-theta} and \Cref{prop:n-concave-same-theta} suggest that (weak) concavity is easier to attain when demands exhibit a higher level of patience (i.e., a smaller $\theta$). As an extreme case, when $\theta_i = 0$ for all $i \in [N]$, concavity holds trivially, which is consistent with \Cref{prop:theta_zero} where the value function becomes linear. In addition, (weak) concavity is easier to achieve when demands display greater heterogeneity in matching efficiencies (i.e., smaller $e_{(4)}$ or $e_{(3)}$). Intuitively, both conditions promote the dominance of self-matching and limit the extent of cross-matching, a structural property that facilitates the proof of (weak) concavity.

\smallskip

This result also suggests that when matching efficiencies are high among many demand types (i.e., $e_{(3)}$ and $e_{(4)}$ are large), clustering similar types---specifically, those with high mutual matching efficiencies---provides a simple yet effective way to restore concavity. Aggregating types increases the effective arrival rates $\underline{\lambda}_i$ while simultaneously reducing the critical matching efficiency parameters $e_{(3)}$ and $e_{(4)}$.

Instead of fully merging demand types, an alternative approach is to aggregate only the ratio constraints \eqref{equ:cn-bound} while preserving granular, type-specific information. To this end, we partition $[N]$ into disjoint subsets $\mathcal{S} = \{S_1, \dots, S_K\}$ such that $[N] = \bigcup_{k=1}^K S_k$. For any $i \in [N]$, let $S(i) \in \mathcal{S}$ denote the unique subset containing $i$. We then define a modified linear program, which relaxes \eqref{equ:cn}, as follows:
\begin{subequations} \label{equ:cn-agg}
\begin{align} 
c(\boldsymbol{\lambda}; \mathcal{S}) = \min_{\boldsymbol{x},\boldsymbol{y}} \quad& \sum_{i\in[N]} \sum_{j\in[N]} c_{(i,j)} x_{i,j} + \sum_{i\in[N]} c_{(i)} y_i \\
\text{s.t.} \quad 
& \theta \sum_{j \in S(i)} x_{i,j} \leq y_i \sum_{j \in S(i)} \lambda_j , & \forall i \in [N], \label{equ:cn-bound-agg-1}\\
& \theta x_{i,j} \leq y_i \lambda_j , & \forall i \in [N], j \notin S(i), \label{equ:cn-bound-agg-2}\\
& \eqref{equ:cn-flow}, \eqref{equ:cn-nonneg-x}, \eqref{equ:cn-nonneg-y}. \notag
\end{align}
\end{subequations}
This LP is defined with respect to the partition $\mathcal{S}$. For each demand type $i \in [N]$, we aggregate the ratio constraints over all types $j$ that belong to the same subset $S(i)$. The objective and all other constraints remain unchanged. Consequently, this formulation yields a finer approximation than fully merging demand types, as it directly targets the subsets of constraints that most hinder concavity.

The following proposition establishes a sufficient condition for weak concavity of $c(\boldsymbol{\lambda}; \mathcal{S})$.
\begin{proposition}\label{prop:n-concave-agg} 
For any partition $\mathcal{S}$, $c(\boldsymbol{\lambda}; \mathcal{S})$ is weakly concave on $\Lambda$ if 
$$
\sum_{j \in S(i)} \underline{\lambda}_j > \theta \frac{\bar{e}}{1-2\bar{e}},\ \forall i \in [N],
$$
where $\bar{e} = \max_{i,j\in [N]:\ S(i) \neq S(j)} e_{i,j}$.
\end{proposition} 
In \Cref{prop:n-concave-agg}, the threshold $\bar{e}$ is the maximum matching efficiency between any two demand types that belong to different subsets of the partition. The bound highlights a clear trade-off: as the sets $S(i)$ become larger---thereby reducing $\bar{e}$ and increasing $\sum_{j \in S(i)} \underline{\lambda}_j$---the sufficient condition for weak concavity becomes easier to satisfy. In the extreme case where $\mathcal{S}=\{[N]\}$ consists of a single set, we have $\bar{e}=0$, and weak concavity holds unconditionally.

\smallskip

We finally turn to the regime in which $\theta$ is relatively large, corresponding to highly impatient demands. In this setting, weak concavity can still be established by imposing upper bounds on $\overline{\lambda}_i$, as formalized in the following proposition. 
\begin{proposition} \label{prop:n-weak-concave-same-theta-upper-bound} 
When $N \geq 4$ and $\theta_i = \theta$, $\forall i\in[N]$, $c(\boldsymbol{\lambda})$ is weakly concave on $\Lambda$ if 
$
\overline{\lambda}_i < {\theta}/({N-3}),\ \forall i \in [N].
$
\end{proposition}
As $\theta$ increases, this upper bound becomes less restrictive. Intuitively, when demands are highly impatient, a substantial fraction of arrivals exits the system unmatched, which naturally facilitates the condition $\boldsymbol{y}^* > \boldsymbol{0}$ (see \Cref{prop:n-weak-concave-nonzero}). We also refer the reader to \Cref{prop:n-weak-concave-same-theta-both-bound} in Appendix~\ref{sec:appendix_theory}, which establishes weak concavity by simultaneously imposing lower bounds on $\underline{\lambda}_i$ and upper bounds on $\overline{\lambda}_i$.

\section{Heterogeneous Patience Levels} \label{sec:diff_theta}

In this section, we extend our analysis of (weak) concavity to the setting in which demands exhibit heterogeneous patience levels. For clarity and without loss of generality, we assume throughout that $\theta_1 \le \theta_2 \le \cdots \le \theta_N$. We further define $\nu := \theta_N/\theta_1 > 1$, so that a larger $\nu$ corresponds to greater heterogeneity in patience levels.

\subsection{Analysis of Two Types of Demands}

Similar to \Cref{sec:n2}, we start with the simplest case where $N = 2$ and $\theta_1 < \theta_2$. We show that the optimal BFS to the LP can still be fully characterized, though the solution structure becomes much more complex than the homogeneous case presented in \Cref{lemma:n2-solution-same-theta}. We first generalize the definition of $\Delta_1(\lambda_1,\lambda_2)$ to the heterogeneous setting and define $\Delta_2(\lambda_1,\lambda_2)$ and $\Delta_3(\lambda_1,\lambda_2)$ as follows:
\begin{align*}
    \Delta_1(\lambda_1, \lambda_2) & := c_{(1)}\frac{\theta_1+\lambda_1}{\theta_1+2\lambda_1} + c_{(2)}\frac{\theta_2+\lambda_2}{\theta_2+2\lambda_2} - c_{(1,2)},\\
    \Delta_2(\lambda_1,\lambda_2) & := \frac{c_{(1)}}{2}\left[1 - \frac{\theta_1(\theta_2+\lambda_1+2\lambda_2)}{\lambda_2(\theta_2+2\lambda_2)}\right] +c_{(2)}\frac{\theta_2+\lambda_2}{\theta_2+2\lambda_2} - c_{(1,2)},\\
    \Delta_3(\lambda_1,\lambda_2) & := \lambda_1 - \lambda_2 - \theta_1.
\end{align*}
Then the following lemma describes the optimal solution to \eqref{equ:cn}.
\begin{lemma} \label{lemma:n2-solution} When $N=2$, an optimal solution to \eqref{equ:cn} is characterized by \Cref{tab:n2-solution}. In addition,

    (i) all optimal solutions to \eqref{equ:cn} satisfy $y_1^*>0$;
    
    (ii) all optimal solutions to \eqref{equ:cn} satisfy $y_2^*>0$ if and only if $\Delta_2(\lambda_1,\lambda_2) < 0$ or $\Delta_3(\lambda_1,\lambda_2) < 0$.
\begin{table}[ht]
\scriptsize
\centering
\caption{Characterization of optimal solutions to program~\eqref{equ:cn} when $N=2$.}
\label{tab:n2-solution}
\resizebox{\textwidth}{!}{
\begin{tabular}{llllllllll}
\toprule
\multirow{2}{*}{Case} & \multicolumn{3}{c}{Conditions}                                                                         & \multicolumn{6}{c}{Optimal Solutions}                                     \\ \cmidrule(lr){2-4}  \cmidrule(lr){5-10} 
                      & $\Delta_1(\lambda_1, \lambda_2)$ & $\Delta_2(\lambda_1, \lambda_2)$ & $\Delta_3(\lambda_1, \lambda_2)$ & $y_1^*$ & $y_2^*$ & $x_{1,1}^*$ & $x_{1,2}^*$ & $x_{2,1}^*$ & $x_{2,2}^*$ \\ \hline
(i)                   & $\leq 0$                         &  & & $ > 0$  & $ > 0$  &$=\lambda_1y_1^*/\theta_1$& $= 0$       & $= 0$       &$=\lambda_2y_2^*/\theta_2$\\
(ii)                  & $> 0$ & $< 0$ & & $ > 0$  & $ > 0$  &$=\lambda_1y_1^*/\theta_1$&$=\lambda_2y_1^*/\theta_1$&$=\lambda_1y_2^*/\theta_2$&$=\lambda_2y_2^*/\theta_2$\\
(iii)                 & & $\geq 0$ & $< 0$ & $ > 0$  & $ > 0$  & $= 0$       &$=\lambda_2y_1^*/\theta_1$&$=\lambda_1y_2^*/\theta_2$&$=\lambda_2y_2^*/\theta_2$\\
(iv)                  & & $\geq 0$ & $\geq 0$ & $ > 0$  & $= 0$   & $=(\lambda_1-y_1^*-x_{1,2}^*)/2$&$=\lambda_2y_1^*/\theta_1$& $= 0$ & $= 0$      \\
\bottomrule
\end{tabular}
}
\end{table}
\end{lemma}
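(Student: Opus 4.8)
The plan is to prove Lemma~\ref{lemma:n2-solution} in three stages: first verify that the claimed solutions in Table~\ref{tab:n2-solution} are feasible and optimal, then establish part (i), and finally establish part (ii).

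\textbf{Stage 1: Verification of the table.} I would begin by observing that the four cases partition the parameter region $\Lambda$: conditions on $\Delta_1$ split off case (i); within $\Delta_1 > 0$, the sign of $\Delta_2$ splits off case (ii); within $\Delta_1 > 0, \Delta_2 \ge 0$, the sign of $\Delta_3$ distinguishes (iii) from (iv). (One must check the consistency of the implicit conditions, e.g. that in cases (iii) and (iv) we indeed have $\Delta_1 > 0$; this should follow from algebraic relations between the $\Delta_k$.) For each case, I would exhibit the primal solution from the table and a matching dual solution, then invoke LP strong duality / complementary slackness to certify optimality. The natural way to \emph{find} the candidate solutions is to guess which constraints among \eqref{equ:cn-bound}, \eqref{equ:cn-nonneg-x}, \eqref{equ:cn-nonneg-y} are binding (this is exactly what the table encodes) and solve the resulting square system together with the flow-balance equations \eqref{equ:cn-flow}. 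For instance, in case (i) the binding constraints are $x_{1,2} = x_{2,1} = 0$ and $\theta_i x_{i,i} = \lambda_i y_i$, which together with \eqref{equ:cn-flow} immediately gives $y_i^* = \lambda_i \theta_i/(\theta_i + 2\lambda_i)$ as in the $N=1$ case, and one checks the cross-match variables are indeed zero at optimum precisely when the ``price'' of a cross match exceeds the savings, i.e. when $\Delta_1 \le 0$. The interpretation of $\Delta_1$ is transparent: it compares $c_{(1,2)}$ against the blended self-matching cost of the two types, so $\Delta_1 \le 0$ means cross-matching is never beneficial. Similar bookkeeping handles the other three cases; the main care is in case (iv) where $y_2^* = 0$ and the self-match $x_{1,1}^*$ absorbs the residual flow.

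\textbf{Stage 2: Part (i), $y_1^* > 0$.} Since $\theta_1 \le \theta_2$, type~1 agents are the more patient (``active'') type, and intuitively they can never be fully matched. Formally, I would argue by contradiction: suppose some optimal solution has $y_1^* = 0$. Then constraint \eqref{equ:cn-bound} with $i=1$ forces $x_{1,j}^* = 0$ for all $j$, so \eqref{equ:cn-flow} for $i=1$ reduces to $x_{1,1}^* + x_{2,1}^* = \lambda_1$ with $x_{1,1}^* = 0$, hence $x_{2,1}^* = \lambda_1$. But then constraint \eqref{equ:cn-bound} with $i = 2, j = 1$ gives $\theta_2 \lambda_1 = \theta_2 x_{2,1}^* \le \lambda_1 y_2^*$, so $y_2^* \ge \theta_2 > 0$; feeding this back through \eqref{equ:cn-flow} for $i=2$ and using nonnegativity should produce a contradiction with the magnitude constraints, or else one can construct a strictly cheaper feasible solution by shifting a small amount of the flow $x_{2,1}$ into $y_1$ and $y_2$, contradicting optimality via Assumption~\ref{assum:cost}. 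I would also check that this covers \emph{all} optimal solutions, not just one BFS, which is automatic once the argument is phrased as: any feasible point with $y_1 = 0$ is either infeasible or strictly improvable.

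\textbf{Stage 3: Part (ii), the iff for $y_2^* > 0$.} Here I would read off directly from the table: $y_2^* = 0$ occurs exactly in case (iv), which is the region $\{\Delta_1 > 0\} \cap \{\Delta_2 \ge 0\} \cap \{\Delta_3 \ge 0\}$; in all of cases (i)--(iii) we have $y_2^* > 0$. So ``all optimal solutions satisfy $y_2^* > 0$'' fails precisely when $(\lambda_1,\lambda_2)$ lies in the case-(iv) region, i.e. when $\Delta_2 \ge 0$ \emph{and} $\Delta_3 \ge 0$ (together with $\Delta_1 > 0$, which is implied). Negating, $y_2^* > 0$ for all optimal solutions iff $\Delta_2 < 0$ or $\Delta_3 < 0$, which is the claim --- provided one also argues that on the \emph{boundary} between case (iv) and its neighbors the optimal solution is non-unique and some optimal solution still has $y_2^* = 0$, so the ``only if'' direction is not spoiled by ties. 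The one subtlety is establishing that in case (iv) $y_2^* = 0$ in \emph{every} optimal solution when the case-(iv) conditions hold \emph{strictly}, and handling the measure-zero boundary carefully; this requires checking the dual is strictly complementary there, or arguing uniqueness of the optimal basis.

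\textbf{Anticipated main obstacle.} The routine-but-lengthy part is the case analysis in Stage~1 --- correctly identifying the binding-constraint pattern in each region and verifying the partition is exhaustive and the $\Delta_k$ inequalities are mutually consistent (e.g. that $\Delta_2 \ge 0$ together with the other conditions forces $\Delta_1 > 0$, so the blank entries in the table are justified). The genuinely delicate point, though, is the phrase ``\emph{all} optimal solutions'' in both (i) and (ii): because an LP can have a whole face of optimal solutions, one cannot simply exhibit one optimal BFS with $y_1^* > 0$; one must rule out alternative optima with $y_1^* = 0$ (resp. characterize exactly when an optimum with $y_2^* = 0$ exists). I expect this to hinge on showing that whenever the relevant $\Delta$ is strictly negative the optimal solution is in fact unique (strict complementary slackness), so the uniqueness argument is where I would concentrate the real work.
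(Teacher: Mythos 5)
Your skeleton (case analysis by binding patterns, perturbation arguments for positivity, attention to ``all optimal solutions'') matches the paper's structure, but three of your load-bearing steps would not go through as described. First, for part (i) you hope the contradiction from $y_1^*=0$ comes either from ``magnitude constraints'' or from shifting flow from $x_{2,1}$ ``into $y_1$ and $y_2$.'' Neither works: a solution with $y_1=0$, $x_{2,1}=\lambda_1$ is perfectly feasible whenever $\lambda_2\ge\lambda_1+\theta_2$, so infeasibility is not available; and the correct perturbation \emph{decreases} $y_2$ while routing the freed mass of $x_{2,1}$ into $y_1$, $x_{1,1}$, $x_{1,2}$ \emph{and} $x_{2,2}$ in carefully chosen proportions so that both flow-balance equations stay tight. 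The resulting cost change is nonpositive only because $c_{(1,2)}\ge\max(c_{(1)},c_{(2)})$ and, crucially, $\theta_1\le\theta_2$; this asymmetry is the entire reason (i) holds unconditionally while (ii) does not, and your sketch never engages with it. Relatedly, your proposed tool for the ``all optimal solutions'' issue --- uniqueness via strict complementarity --- is the wrong one: the optimum here is genuinely non-unique (e.g.\ on the $\Delta_1=0$ boundary, and the table itself only claims ``an'' optimal solution with $x_{1,2}^*=x_{2,1}^*=0$ in case (i)). The correct and simpler device, which you mention in passing and then abandon, is that \emph{any feasible} point with $y_1=0$ (resp.\ $y_2=0$ when $\Delta_2<0$ or $\Delta_3<0$) admits a strictly cheaper feasible perturbation, hence no optimal solution can have that property.

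Second, your treatment of $\Delta_2$ and $\Delta_3$ is circular: you ``read off'' part (ii) from the table, but the table's case boundaries are exactly what must be derived. In the paper, $\Delta_3\ge0$ arises as the feasibility condition for a $y_2=0$ solution (forcing $x_{1,2}^*=\lambda_2$, $y_1^*\ge\theta_1$, hence $x_{1,1}^*\le(\lambda_1-\lambda_2-\theta_1)/2=\Delta_3/2$, which must be nonnegative), and $\Delta_2$ is the marginal cost of the specific perturbation that moves off $y_2=0$; without deriving both, the ``if and only if'' in (ii) has no content. You also overstate the burden for the ``only if'' direction: one only needs to \emph{exhibit} a single optimal solution with $y_2^*=0$ when $\Delta_2\ge0$ and $\Delta_3\ge0$ (which the case-(iv) construction provides), not that every optimum there has $y_2^*=0$. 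Finally, the justification of the blank entries requires the inequality $\Delta_1-\Delta_2>0$ (so that $\Delta_2\ge0\Rightarrow\Delta_1>0$), which you correctly guess ``should follow from algebraic relations'' but do not verify; it is a one-line computation, but it is needed before the four rows can be read as an exhaustive, consistent case split.
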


\Cref{lemma:n2-solution} identifies four possible types of optimal BFSs, determined by the signs of three functions. Note that in \Cref{tab:n2-solution}, when $x_{i,j}^* = 0$, the corresponding constraint in \eqref{equ:cn-nonneg-x} is binding; when $x_{i,j}^\ast = \lambda_j y_i^\ast/\theta_i$, the corresponding constraint in \eqref{equ:cn-bound} is binding. The first solution category corresponds to the case with no cross-matching between type 1 and type 2 demands (i.e., $x_{1,2}^\ast= x_{2,1}^\ast = 0$). The second category is the fully-matched case, where all types of matchings occur and all constraints in \eqref{equ:cn-bound} are binding. In the third category, there is no self-matching between type 1 demands ($x_{1,1}^\ast = 0$). The last category is characterized by $y_2^\ast = 0$, where all type 2 demands (with low patience) are passively matched with type 1 demands (with high patience). The complete closed-form expressions of optimal solutions are provided in Appendix~\ref{proof:n2-solution}.

Although we can derive a closed-form expression for $c(\boldsymbol{\lambda})$, the formula becomes significantly intricate when $\theta_1 \neq \theta_2$, and thus establishing standard concavity becomes highly challenging; \Cref{example:non-concave-diff-theta} also demonstrates that $c(\boldsymbol{\lambda})$ is not generally concave in this case of $N=2$. Nonetheless, \Cref{lemma:n2-solution} still helps us to identify the region where $c(\boldsymbol{\lambda})$ is weakly concave. Based on the results of \Cref{lemma:n-weak-concave}, \Cref{prop:n-weak-concave-nonzero} and \Cref{lemma:n2-solution}, we see that if, for every $ (\lambda_1,\lambda_2)\in\Lambda$, at least one of the inequalities $\Delta_2(\lambda_1,\lambda_2) < 0$ or $\Delta_3(\lambda_1,\lambda_2) < 0$ holds, then $y_1^\ast,y_2^\ast > 0$ is guaranteed. The following proposition further explores this condition and gives an explicit characterization of $\Lambda$ satisfying such conditions.
\begin{proposition} \label{prop:n2-concave}
    Define 
    \begin{align*}
        \tau_1 &:= c_{(1)}(\theta_2-3\theta_1)+2c_{(2)}\theta_2-2c_{(1,2)}\theta_2, \\
        \tau_2 &:= \tau_1^2-8c_{(1)}(2c_{(1,2)}-c_{(1)}-c_{(2)})\theta_1(\theta_1+\theta_2),
    \end{align*}
    then $c(\boldsymbol{\lambda})$ is weakly concave on $\Lambda$ under any of the following conditions:
    
    (i) $\tau_1 \leq 0$ or $\tau_2<0$;
    
    (ii) $\tau_1>0$, $\tau_2 \geq 0$, $2c_{(1,2)}-c_{(1)}-c_{(2)} \neq 0$ and
    $
    \underline{\lambda}_2 > (\tau_1+\sqrt{\tau_2})/\left[4(2c_{(1,2)}-c_{(1)}-c_{(2)})\right];
    $

    {(iii) $\tau_1>0$, $\tau_2 \geq 0$, and $\underline{\lambda}_2 > \overline{\lambda}_1 - \theta_1$.}
\end{proposition}

\Cref{prop:n2-concave} provides three sufficient conditions for weak concavity, depending on the signs of $\tau_1$ and $\tau_2$. In the first case, when $\tau_1 \le 0$ or $\tau_2 < 0$, $c(\boldsymbol{\lambda})$ is weakly concave on any closed interval $\Lambda$. In the second case, weak concavity holds under a lower bound on $\underline{\lambda}_2$, and in the third case it requires restrictions on both $\underline{\lambda}_2$ and $\overline{\lambda}_1$. In general, smaller values of $\tau_1$ and $\tau_2$ are favorable for establishing weak concavity. Since both $\tau_1$ and $\tau_2$ decrease in $\theta_1$, a larger $\theta_1$ (equivalently, less heterogeneity because $\theta_1<\theta_2$) promotes weak concavity. Similarly, because $\tau_1$ and $\tau_2$ decrease in $c_{(1,2)}$, a larger $c_{(1,2)}$ (i.e., lower matching efficiency) also facilitates weak concavity.

\begin{remark}
    Although \Cref{prop:n2-concave} provides only sufficient conditions, we find that the resulting bounds are quite tight. In Appendix~\ref{sec:examples}, we give a counterexample (\Cref{example:non-weak-concavity}) demonstrating that as soon as $\lambda_2$ falls just below our derived bound in the second case, weak concavity is violated.
\end{remark}

\begin{remark}
In the second case of \Cref{prop:n2-concave}, we require $2c_{(1,2)}-c_{(1)}-c_{(2)} \neq 0$, or equivalently, $e_{1,2} \neq 0.5$. This condition means that the two demand types cannot have perfect matching efficiency. In practical applications such as ridesharing, this essentially precludes the possibility that two types of demands have exactly the same OD pairs. In Appendix~\ref{sec:examples}, we also provide a counterexample (\Cref{example:non-weak-concavity-special}) demonstrating that weak concavity is violated when $e_{1,2}=0.5$ and $\theta_1 \neq \theta_2$.     
\end{remark}

Given the complexity of the expressions for $\tau_1$ and $\tau_2$ in \Cref{prop:n2-concave}, we also state the following corollary, which provides a more intuitive sufficient condition in a simplified form.

\begin{corollary} \label{corl:n2-concave}
$c(\boldsymbol{\lambda})$ is weakly concave on $\Lambda$ under any of the following conditions:

(i) $\nu < 3$;

(ii) $\nu \geq 3$, $e_{1,2} \neq 0.5$ and 
    $
    \underline{\lambda}_2 > \theta_2/\left[4(1-2e_{1,2})\right]
    $;

{
(iii) $\nu \geq 3$, and $\overline{\lambda}_1 \leq \theta_1$.
}
\end{corollary}

According to \Cref{corl:n2-concave}, the function is guaranteed to be weakly concave whenever the disparity in patience levels is moderate ($\nu < 3$). However, if the demand types exhibit significantly different patience levels ($\nu \geq 3$), we require either a sufficient arrival rate for type 2 (i.e., a lower bound on $\underline{\lambda}_2$) or a limited arrival rate for type 1 (i.e., an upper bound on $\overline{\lambda}_1$). Regarding condition (ii), the lower bound on $\underline{\lambda}_2$ depends on the patience of type 2 (i.e., $\theta_2$) and the cross-matching efficiency (i.e., $e_{12}$). Consistent with \Cref{thm:n-weak-concave-same-theta}, a smaller $\theta_2$ (higher and less heterogeneous patience) or a smaller $e_{1,2}$ (lower matching efficiency) relaxes this bound, expanding the region of $\Lambda$ where weak concavity holds. Conversely, condition (iii) imposes an upper bound on $\overline{\lambda}_1$ relative to $\theta_1$. Analogous to \Cref{prop:n-weak-concave-same-theta-upper-bound}, this condition becomes easier to satisfy as $\theta_1$ increases. Intuitively, when type 1 (and type 2) demands are sufficiently impatient, weak concavity is also preserved.

\subsection{Analysis of Multiple Types of Demands}

For general $N$, the next theorem gives the results related to weak concavity.
{
\begin{theorem} \label{thm:n-weak-concave} 
$c(\boldsymbol{\lambda})$ is weakly concave on $\Lambda$  if $\nu \leq 2$, $e_{(3)} \neq 0.5$ and 
$
\underline{\lambda}_i > \theta_i {e_{(3)}}/({1-2e_{(3)}}),\ \forall i \in [N].
$
\end{theorem}

\Cref{thm:n-weak-concave} provides a sufficient condition for weak concavity for the regime where $\nu \leq 2$. The lower bound is analogous to the one in the homogeneous case (\Cref{thm:n-weak-concave-same-theta}), but now depends on $e_{(3)}$ rather than $e_{(4)}$. This implies that the conditions required to maintain weak concavity become stricter when patience levels are heterogeneous. 

We also state our results regarding standard concavity and the tightness of LP as follows.
\begin{proposition} \label{prop:n-concave} 
$c(\boldsymbol{\lambda})$ is concave on $\Lambda$ and coincides
with the optimal value function of the corresponding MDP if $e_{(2)} \neq 0.5$ and 
$
\underline{\lambda}_i > \theta_i {e_{(2)}}/({1-2e_{(2)}}),\ \forall i \in [N].
$
\end{proposition}

Analogous to \Cref{prop:n-concave-same-theta}, which depended on $e_{(3)}$, \Cref{prop:n-concave} imposes a lower bound based on the second critical matching efficiency, $e_{(2)}$. This stricter bound ensures that in the optimal solution, each demand type is matched exclusively within its own type (i.e., no cross-matching occurs). This effectively decomposes the general problem into independent single-type ($N=1$) subproblems, for which concavity was previously established in \Cref{prop:n1-concave}.

Consistent with our findings for the homogeneous case (\Cref{thm:n-weak-concave-same-theta} and \Cref{prop:n-concave-same-theta}), our derived bounds suggest that it is easier to achieve (weak) concavity when demands have a higher overall patience level or a greater heterogeneity in matching efficiencies. Furthermore, when patience levels are not exactly the same, the smaller difference (i.e., $\nu \leq 2$) also makes weak concavity easier to achieve.

Some discussions at the end of \Cref{sec:concavity} can also be extended to the setting with heterogeneous patience levels.
For instance, clustering demand types with high mutual matching efficiencies remains an effective strategy for restoring concavity: aggregating types can relax the sufficient conditions required by \Cref{thm:n-weak-concave} and \Cref{prop:n-concave}. Moreover, analogous to \Cref{prop:n-weak-concave-same-theta-upper-bound}, when the patience parameters $\theta_i$ are sufficiently large, weak concavity can be established by imposing upper bounds on $\overline{\lambda}_i$ rather than lower bounds on $\underline{\lambda}_i$. We formalize this observation in the following proposition. For more general cases, we refer the reader to \Cref{prop:n-weak-concave-both-bound} in Appendix~\ref{sec:appendix_theory}, which establishes weak concavity by simultaneously imposing lower and upper bounds.
\begin{proposition} \label{prop:n-weak-concave-upper-bound} 
When $(N-1)\nu>2$, $c(\boldsymbol{\lambda})$ is weakly concave on $\Lambda$ if 
$$
\overline{\lambda}_i < \theta_i \max\left\{\frac{1}{N-1}, \frac{1}{(N-1)\nu-2}\right\},\ \forall i \in [N].
$$
\end{proposition}

\section{Application to Pricing}
\label{sec:pricing}

In this section, we leverage the (weak) concavity of $c(\boldsymbol{\lambda})$ to design an efficient algorithm for solving the upper-level pricing problem. We first introduce the algorithm details in \Cref{subsec:pricing_mm}, and then conduct a case study using the Chicago ridesharing dataset in \Cref{subsec:chicago}. We also explore several extensions of the pricing problem, including limited supply (e.g., a limited pool of drivers or couriers), multi-product variants (e.g., solo versus shared service), and matching disutility (e.g., detours and additional waiting time), in \Cref{sec:pricing_extension}.

\subsection{A Minorization-Maximization Algorithm}
\label{subsec:pricing_mm}
For the pricing problem \eqref{equ:gn}, although the revenue function is assumed to be concave, $c(\boldsymbol{\lambda})$ is generally not convex, and thus \eqref{equ:gn} is not, in general, a convex optimization problem. Moreover, \Cref{example:multimodality} shows that $g(\boldsymbol{\lambda})$ is neither unimodal nor smooth. Nevertheless, as discussed in \Cref{sec:concavity}, $c(\boldsymbol{\lambda})$ often exhibits concavity or weak concavity. Leveraging this structure, we can express the objective $g(\boldsymbol{\lambda})$ as a difference of two concave functions:
\[
g(\boldsymbol{\lambda})
=
\left(\sum_{i\in[N]} \lambda_i p_i(\lambda_i) - \frac{1}{2}\rho \|\boldsymbol{\lambda}\|^2 \right)
-
\left( c(\boldsymbol{\lambda}) - \frac{1}{2}\rho \|\boldsymbol{\lambda}\|^2 \right),
\]
where $\rho \ge 0$ is chosen so that $c(\boldsymbol{\lambda}) - \rho\|\boldsymbol{\lambda}\|^2/2$ is concave. We can then adapt the Minorization-Maximization (MM) algorithm to compute a solution with a convergence guarantee.

At each step of the MM algorithm, we construct a surrogate linear function to replace the concave function $c(\boldsymbol{\lambda}) - \rho \|\boldsymbol{\lambda}\|^2/2$. 
Denote the value of $\boldsymbol{\lambda}$ after the $t$-th iteration by $\boldsymbol{\lambda}^{(t)}$. At the $(t+1)$-th iteration, the surrogate linear function is generated based on the supergradient of this concave function (analogous to the subgradient in convex functions; see \citealt{lu2014generalized}) at the point of $\boldsymbol{\lambda}^{(t)}$. Let $\boldsymbol{\gamma}^*=(\gamma_i^*)_{i\in[N]}$ and $\boldsymbol{\eta}^*=(\eta_{i,j}^*)_{i,j\in[N]}$ be the optimal dual variables corresponding to \eqref{equ:cn-flow} and \eqref{equ:cn-bound} respectively. (Note that under the nondegeneracy condition in \Cref{lemma:n-weak-concave} or the nonzero unmatched-rate condition in \Cref{prop:n-weak-concave-nonzero}, the dual optimal solution $(\boldsymbol{\gamma}^*, \boldsymbol{\eta}^*)$ is unique.) Then, according to the envelope theorem (see, e.g., Corollary 5 in \citealt{milgrom2002envelope}), the supergradient, denoted by $\boldsymbol{v}=(v_i)_{i\in[N]}$, can be given as
\begin{align} \label{equ:subgradient}
\boldsymbol{v}:=\left(\gamma_i^* + \sum_{j\in[N]} y_j^* \eta_{j,i}^* - \rho \lambda_i^{(t)}\right)_{i\in[N]} \in \partial \left( c(\boldsymbol{\lambda}^{(t)}) -  \frac{1}{2}\rho \|\boldsymbol{\lambda}^{(t)}\|^2\right),
\end{align}
and the surrogate linear function is given as
$$
c(\boldsymbol{\lambda}^{(t)}) - \frac{1}{2}\rho \|\boldsymbol{\lambda}^{(t)}\|^2+\boldsymbol{v}^T \left(\boldsymbol{\lambda}-\boldsymbol{\lambda}^{(t)}\right) \geq c(\boldsymbol{\lambda}) - \frac{1}{2}\rho \|\boldsymbol{\lambda}\|^2,\quad\forall\boldsymbol{\lambda}\in\Lambda,
$$
where the inequality holds because $c(\boldsymbol{\lambda}) - \rho \|\boldsymbol{\lambda}\|^2/2$ is a concave function.

Define $Q(\boldsymbol{\lambda} \mid \boldsymbol{\lambda}^{(t)}) = \sum_{i\in[N]} \lambda_ip_i(\lambda_i) - \rho \|\boldsymbol{\lambda}\|^2/2 - c(\boldsymbol{\lambda}^{(t)}) + \rho \|\boldsymbol{\lambda}^{(t)}\|^2/2-\boldsymbol{v}^T(\boldsymbol{\lambda}-\boldsymbol{\lambda}^{(t)})$. It can be easily verified that $Q(\boldsymbol{\lambda} \mid \boldsymbol{\lambda}^{(t)})$ is a concave function since the original non-concave component in $g(\boldsymbol{\lambda})$ has been substituted with a linear function. Therefore, we can update the value of $\boldsymbol{\lambda}$ by letting $\boldsymbol{\lambda}^{(t+1)} \gets \arg\max_{\boldsymbol{\lambda} \in \Lambda} Q(\boldsymbol{\lambda} \mid \boldsymbol{\lambda}^{(t)})$, where the maximization of $Q(\boldsymbol{\lambda} \mid \boldsymbol{\lambda}^{(t)})$ can be solved using standard convex optimization techniques. 

Note that
\begin{align*}
Q(\boldsymbol{\lambda} \mid \boldsymbol{\lambda}^{(t)}) 
& \leq \sum_{i\in[N]} \lambda_ip_i(\lambda_i) - \frac{1}{2}\rho \|\boldsymbol{\lambda}\|^2  - \left( c(\boldsymbol{\lambda}) - \frac{1}{2}\rho \|\boldsymbol{\lambda}\|^2\right) = g(\boldsymbol{\lambda}),\quad\forall\boldsymbol{\lambda}\in\Lambda,
\end{align*}
and
$
Q(\boldsymbol{\lambda}^{(t)} \mid \boldsymbol{\lambda}^{(t)}) = g(\boldsymbol{\lambda}^{(t)})
$, we have the following inequality:
$$
g(\boldsymbol{\boldsymbol{\lambda}}^{(t+1)}) \geq Q(\boldsymbol{\lambda}^{(t+1)} \mid \boldsymbol{\lambda}^{(t)}) \geq Q(\boldsymbol{\lambda}^{(t)} \mid \boldsymbol{\lambda}^{(t)}) = g(\boldsymbol{\lambda}^{(t)}),
$$
which guarantees that the objective function value is non-decreasing at each iteration. This monotonic improvement then leads to monotone convergence to a local optimum \citep{hunter2004tutorial}.

\begin{algorithm} 
\caption{Minorization-Maximization (MM) Algorithm %
} \label{alg:mm}
\begin{algorithmic}[1]
\State \textbf{Input:} Initial guess $\boldsymbol{\lambda}^{(0)} \in \Lambda$, $\varepsilon > 0$, $\delta_{\text{MM}} > 0$.
\State \textbf{Initialize:} $t \gets 0$
\Repeat
    \State Solve $c(\boldsymbol{\lambda}^{(t)})$, obtain optimal primal variables $\boldsymbol{y}^*=(y_i^*)_{i\in[N]}$, $\boldsymbol{x}^*=(x_{i,j}^*)_{i,j\in[N]}$ and dual variables $\boldsymbol{\gamma}^*=(\gamma_i^*)_{i\in[N]}$, $\boldsymbol{\eta}^*=(\eta_{i,j}^*)_{i,j\in[N]}$.
    \State $\rho \gets 0$
    \Repeat
    \State Calculate $\boldsymbol{v} = (v_i)_{i\in[N]}$, where
    $
    v_i= \gamma_i^* + \sum_{j\in[N]} y_j^* \eta_{j,i}^* - \rho \lambda_i^{(t)}.
    $
    \State Define $Q(\boldsymbol{\lambda} \mid \boldsymbol{\lambda}^{(t)}) = \sum_{i\in[N]} \lambda_ip_i(\lambda_i) - \rho \|\boldsymbol{\lambda}\|^2/2 - c(\boldsymbol{\lambda}^{(t)}) + \rho \|\boldsymbol{\lambda}^{(t)}\|^2/2- \boldsymbol{v}^T(\boldsymbol{\lambda}-\boldsymbol{\lambda}^{(t)})$.
    \State Update $\boldsymbol{\lambda}^{(t+1)} \gets \arg\max_{\boldsymbol{\lambda} \in \Lambda} Q(\boldsymbol{\lambda} \mid \boldsymbol{\lambda}^{(t)})$.
    \If{$g(\boldsymbol{\lambda}^{(t+1)}) < g(\boldsymbol{\lambda}^{(t)})$}
    \State Update $\rho \gets \rho + \delta_{\text{MM}}$.
    \EndIf
    \Until{$g(\boldsymbol{\lambda}^{(t+1)}) \geq g(\boldsymbol{\lambda}^{(t)})$}
    \State $t \gets t + 1$
\Until{$|g(\boldsymbol{\lambda}^{(t+1)}) - g(\boldsymbol{\lambda}^{(t)})| < \varepsilon$}
\State \textbf{Output:} $\boldsymbol{\lambda}^{(t)}$
\end{algorithmic}
\end{algorithm}

Summarizing the ideas above, we present the pseudocode of the MM algorithm in \Cref{alg:mm}. %
To specify the value of $\rho$, we slightly modify the standard MM algorithm by introducing a search step for $\rho$. 
Initially, we set $\rho = 0$ (Line 5) and perform one iteration to update $\boldsymbol{\lambda}$ (Lines 7--9). 
If the objective function does not increase after this iteration, it indicates that $c(\boldsymbol{\lambda}) - \rho \|\boldsymbol{\lambda}\|^2/2$ is not concave; we then increment $\rho$ by $\delta_{\text{MM}}$ and repeat the update.
If $c(\boldsymbol{\lambda})$ is weakly concave, it is guaranteed that after a finite number of increments, we will find a $\rho$ such that $c(\boldsymbol{\lambda}) - \rho \|\boldsymbol{\lambda}\|^2/2$ is concave, ensuring that the objective function $g(\boldsymbol{\lambda})$ increases after each iteration. By construction, each iteration yields an increase in the objective, and thus the algorithm is guaranteed to converge. It is worth noting that in the ridesharing case study presented in the next subsection, we \emph{never} observe the need to increase $\rho$; \Cref{alg:mm} consistently converges with $\rho = 0$, requiring \emph{no} stepsize tuning on real data. The same behavior is also observed across numerous synthetically generated instances. In contrast, alternative methods for solving~\eqref{equ:cn}, such as the projected gradient-based approaches, are highly sensitive to stepsize selection and require careful tuning, as will be demonstrated in \Cref{subsec:chicago}.

\subsection{Chicago Ridesharing Case Study}\label{subsec:chicago}

In this subsection, we conduct numerical experiments using the Chicago ridesharing dataset.\footnote{Code and data to reproduce the experiments can be accessed at \url{https://github.com/chen-jl99/pricing-dynamic-matching}.} The dataset, provided by Chicago Data Portal,\footnote{\url{https://data.cityofchicago.org/Transportation/Transportation-Network-Providers-Trips-2018-2022-/m6dm-c72p/about_data}} contains trips reported by ridesharing companies within the city of Chicago. The dataset divides the city (excluding the O'Hare International Airport) into 76 community areas, and each data point includes the passenger's OD pair, represented by the corresponding communities' centroid locations. We use typical rush-hour data (i.e., 7:30 AM--8:30 AM on Mondays), spanning 8 weeks between October and November 2019, prior to the COVID-19 pandemic, to conduct the experiment.

To aggregate riders into distinct types, we apply the $K$-means clustering algorithm to the origin-destination (OD) pairs of all trips, with the number of clusters set to $N \in \{100, 200,1000\}$.\footnote{In the dataset, there are in total 3042 distinct OD pairs without clustering.} For each cluster (i.e., rider type $i \in [N]$), we treat the cluster center as the representative OD for that type, and the average number of requests per hour within the cluster as the maximum possible hourly arrival rate $\overline{\lambda}_i$. %
Given the OD of each rider type, we compute the corresponding solo trip distances and pooled trip distances (in miles), denoted by $[\ell_{(i)}]_{i \in [N]}$ and $[\ell_{(i,j)}]_{i, j \in [N]}$, respectively. We let $c_{(i,j)} = c\ell_{(i,j)}$ and $c_{(i)}=c\ell_{(i)}$, where $c$ is the per-mile cost. We consider three possible values of $c\in\{0.7,0.9,1.1\}$ in the experiment, with a unit of dollar per mile. We assume in this section that the riders’ per-mile willingness to pay is uniformly distributed in $[0,1]$; consequently, we have 
$
p_i(\lambda_i) = \ell_{(i)}\left(1-\lambda_i/\overline{\lambda}_i\right)
$, $\forall i\in[N]$. {To evaluate the robustness of our results, we also test an alternative exponential demand model in Appendix~\ref{appendix:exponential_demand}.}

\smallskip

We compare the following two methods to solve the pricing problem \eqref{equ:gn}:

\begin{itemize}
    \item \textit{Projected Gradient (PG) method.} 
    We use PG as the baseline method. At each iteration, $\boldsymbol{\lambda}$ is updated along with the direction of the gradient while restricting within $\Lambda$. When $g(\boldsymbol{\lambda})$ is differentiable at $\boldsymbol{\lambda}$, its gradient is given by
\begin{align}
    \nabla g(\boldsymbol{\lambda}) = \left[ \ell_{(i)} \left(1 - \frac{2\lambda_i}{\overline{\lambda}_i}\right) - \gamma_i^* - \sum_{j \in [N]} y_j^* \eta_{j,i}^* \right]_{i \in [N]}, \label{equ:pg-gradient}
\end{align}
where $\boldsymbol{\gamma}^* = (\gamma_i^*)_{i \in [N]}$ and $\boldsymbol{\eta}^* = (\eta_{i,j}^*)_{i,j \in [N]} \leq \boldsymbol{0}$ are the optimal dual variables as defined in \Cref{alg:mm}. If $g(\boldsymbol{\lambda})$ is non-differentiable, we continue to use~\eqref{equ:pg-gradient} to perform the iteration, interpreting it as a subgradient or supergradient when such gradients exist. The initial step size $\delta_{\text{PG}}$ is chosen as $\{100, 10, 1\}$. We also dynamically adjust the step size, by checking whether the objective function increases after each iteration. If $g(\boldsymbol{\lambda}^{(t+1)}) < g(\boldsymbol{\lambda}^{(t)})$, we decrease the step size by half.
    \item \textit{Minorization-Maximization (MM) method.} This is our \Cref{alg:mm}. %
    Note that since we now have an explicit form of $p_i(\lambda_i)$, we can give a closed form solution of $\boldsymbol{\lambda}^{(t+1)} \gets \arg\max_{\boldsymbol{\lambda} \in \Lambda} Q(\boldsymbol{\lambda} \mid \boldsymbol{\lambda}^{(t)})$ (Line 9 of \Cref{alg:mm}) using its first-order condition:
$$
\lambda_i^{(t+1)} = \max\left\{\underline{\lambda}_i, \min\left\{\overline{\lambda}_i, \overline{\lambda}_i\frac{\ell_{(i)}- \left(\gamma_i^* + \sum_{j\in[N]} y_j^* \eta_{j,i}^*\right) + \rho \lambda_i^{(t)}}{2\ell_{(i)}+\rho\overline{\lambda}_i}\right\}\right\}.
$$
Besides, since we never observe the need to increase $\rho$ in our experiments---i.e., \Cref{alg:mm} consistently converges with $\rho = 0$---the choice of $\delta_{\text{MM}}$ in \Cref{alg:mm} is not relevant.

\end{itemize}

For both algorithms, we set $\underline{\lambda}_i = 10^{-3}$ for all $i \in [N]$. Initial solution $\boldsymbol{\lambda}^{(0)}$ is randomly sampled from the feasible set $\Lambda = [\underline{\boldsymbol{\lambda}}, \overline{\boldsymbol{\lambda}}]$ and kept the same across all algorithms within the same run. The stopping criterion is defined as $|g(\boldsymbol{\lambda}^{(t+1)}) - g(\boldsymbol{\lambda}^{(t)})| < \varepsilon = 10^{-3}$. We also track the running time of
each algorithm; if it exceeds 20 minutes, the algorithm terminates and returns the result from the latest iteration. We report and compare the following three metrics, averaged over three different random seeds:\footnote{The randomness comes from the initialization of $K$-means clustering, as well as the sampling of $\boldsymbol{\lambda}^{(0)}$ and $(\theta_i)_{i\in[N]}$.} (i) running time (in seconds);\footnote{The running time includes the time to build LP models in Gurobi.} (ii) number of iterations; and (iii) final objective value.

\begin{table}[t] \centering \scriptsize
\caption{Numerical results of solving \eqref{equ:gn} when $\theta_1=\cdots=\theta_N=\theta$.} \label{tab:results-same-theta}
\resizebox{\textwidth}{!}{%
\begin{tabular}{lclllllllllllll}
\toprule
\multicolumn{3}{c}{$c$ (\$/mile)} & \multicolumn{4}{c}{$0.7$} & \multicolumn{4}{c}{$0.9$} & \multicolumn{4}{c}{$1.1$} \\ \cmidrule(lr){4-7} \cmidrule(lr){8-11} \cmidrule(lr){12-15} 
\multicolumn{3}{c}{$\theta$ (min$^{-1}$)} & $1/5$ & $1/3$ & $1$ & $2$ & $1/5$ & $1/3$ & $1$ & $2$ & $1/5$ & $1/3$ & $1$ & $2$ \\ \hline
\multicolumn{1}{c}{\multirow{12}{*}{$N=100$}} & \multirow{4}{*}{\begin{tabular}[c]{@{}c@{}}Running\\ Time (sec)\end{tabular}} & PG ($\delta_{\text{PG}}=100$) & 10.2 & 9.9 & 9.5 & 10.4 & 9.6 & 9.9 & 10.1 & 10.4 & 27.1 & 21.3 & 14.2 & 12.2 \\
\multicolumn{1}{c}{} &  & PG ($\delta_{\text{PG}}=10$) & 8.5 & 7.9 & 8.6 & 9.6 & 7.5 & 7.6 & 8.8 & 8.5 & 17.8 & 13.9 & 12.0 & 10.0 \\
\multicolumn{1}{c}{} &  & PG ($\delta_{\text{PG}}=1$) & 57.8 & 58.9 & 44.2 & 51.1 & 48.6 & 45.7 & 47.6 & 44.5 & 26.2 & 24.1 & 37.8 & 20.2 \\
\multicolumn{1}{c}{} &  & MM & 1.7 & 1.7 & 1.7 & 1.8 & 1.9 & 2.0 & 2.3 & 2.1 & 3.0 & 4.2 & 5.6 & 6.4 \\ \cline{2-15} 
\multicolumn{1}{c}{} & \multirow{4}{*}{\begin{tabular}[c]{@{}c@{}}Num.\\ Iterations\end{tabular}} & PG ($\delta_{\text{PG}}=100$) & 30.3 & 29.3 & 30.0 & 31.0 & 30.3 & 29.7 & 29.7 & 31.3 & 85.3 & 64.0 & 43.7 & 34.0 \\
\multicolumn{1}{c}{} &  & PG ($\delta_{\text{PG}}=10$) & 24.7 & 23.3 & 27.0 & 28.3 & 22.3 & 22.0 & 25.7 & 25.3 & 53.3 & 41.3 & 34.3 & 27.0 \\
\multicolumn{1}{c}{} &  & PG ($\delta_{\text{PG}}=1$) & 179.3 & 180.0 & 133.7 & 155.3 & 148.3 & 139.0 & 144.7 & 140.7 & 79.3 & 75.0 & 112.0 & 55.7 \\
\multicolumn{1}{c}{} &  & MM & 4.3 & 4.0 & 4.0 & 4.3 & 4.3 & 5.0 & 6.0 & 5.3 & 8.0 & 13.0 & 16.3 & 17.3 \\ \cline{2-15} 
\multicolumn{1}{c}{} & \multirow{4}{*}{\begin{tabular}[c]{@{}c@{}}Objective\\ Function\end{tabular}} & PG ($\delta_{\text{PG}}=100$) & 64.89 & 59.70 & 46.47 & 38.48 & 40.08 & 34.37 & 21.15 & 14.30 & 11.35 & 10.60 & 3.85 & 1.44 \\
\multicolumn{1}{c}{} &  & PG ($\delta_{\text{PG}}=10$) & 64.89 & 59.70 & 46.46 & 38.47 & 40.08 & 34.37 & 21.15 & 14.30 & 12.27 & 10.12 & 3.95 & 1.68 \\
\multicolumn{1}{c}{} &  & PG ($\delta_{\text{PG}}=1$) & 64.71 & 59.56 & 46.18 & 38.33 & 39.96 & 34.23 & 21.01 & 13.97 & 17.38 & 13.60 & 5.06 & 1.60 \\
\multicolumn{1}{c}{} &  & MM & 64.89 & 59.70 & 46.47 & 38.48 & 40.08 & 34.37 & 21.15 & 14.30 & 19.97 & 14.68 & 5.13 & 1.73 \\ \hline
\multirow{12}{*}{$N=200$} & \multirow{4}{*}{\begin{tabular}[c]{@{}c@{}}Running\\ Time (sec)\end{tabular}} & PG ($\delta_{\text{PG}}=100$) & 38.3 & 39.0 & 46.8 & 55.2 & 39.0 & 40.9 & 52.7 & 63.5 & 184.3 & 251.1 & 88.0 & 77.7 \\
 &  & PG ($\delta_{\text{PG}}=10$) & 28.4 & 30.8 & 37.2 & 46.0 & 28.2 & 31.3 & 43.4 & 53.5 & 193.4 & 247.5 & 57.7 & 59.1 \\
 &  & PG ($\delta_{\text{PG}}=1$) & 279.9 & 302.2 & 136.3 & 155.4 & 108.6 & 124.2 & 168.4 & 228.2 & 253.6 & 159.1 & 76.8 & 89.6 \\
 &  & MM & 5.5 & 5.9 & 7.5 & 9.3 & 7.3 & 7.2 & 10.3 & 12.1 & 19.7 & 24.4 & 39.6 & 25.0 \\ \cline{2-15} 
 & \multirow{4}{*}{\begin{tabular}[c]{@{}c@{}}Num.\\ Iterations\end{tabular}} & PG ($\delta_{\text{PG}}=100$) & 33.0 & 32.3 & 32.3 & 31.7 & 32.7 & 32.0 & 33.3 & 34.0 & 137.0 & 164.3 & 46.0 & 36.3 \\
 &  & PG ($\delta_{\text{PG}}=10$) & 25.7 & 25.7 & 25.7 & 25.7 & 24.3 & 24.7 & 27.0 & 27.0 & 143.0 & 161.7 & 29.7 & 28.0 \\
 &  & PG ($\delta_{\text{PG}}=1$) & 268.0 & 269.7 & 96.3 & 85.3 & 99.7 & 99.7 & 103.3 & 109.0 & 193.7 & 111.7 & 37.3 & 38.0 \\
 &  & MM & 4.0 & 4.0 & 4.0 & 4.0 & 5.3 & 5.0 & 5.3 & 5.0 & 15.7 & 16.3 & 17.3 & 9.7 \\ \cline{2-15} 
 & \multirow{4}{*}{\begin{tabular}[c]{@{}c@{}}Objective\\ Function\end{tabular}} & PG ($\delta_{\text{PG}}=100$) & 60.42 & 55.13 & 44.02 & 37.69 & 34.65 & 29.37 & 19.21 & 13.68 & 12.27 & 10.11 & 4.42 & 1.64 \\
 &  & PG ($\delta_{\text{PG}}=10$) & 60.42 & 55.17 & 44.02 & 37.69 & 34.61 & 29.34 & 19.21 & 13.68 & 12.73 & 10.12 & 4.41 & 1.55 \\
 &  & PG ($\delta_{\text{PG}}=1$) & 60.13 & 54.88 & 43.77 & 37.47 & 34.39 & 29.11 & 18.94 & 13.41 & 14.26 & 10.85 & 4.03 & 1.26 \\
 &  & MM & 60.42 & 55.17 & 44.02 & 37.69 & 34.65 & 29.37 & 19.21 & 13.69 & 15.15 & 11.27 & 4.60 & 1.69\\ \hline
 \multirow{12}{*}{$N=1000$} & \multirow{4}{*}{\begin{tabular}[c]{@{}c@{}}Running\\ Time (sec)\end{tabular}} & PG ($\delta_{\text{PG}}=100$) & 1200 & 1200 & 1200 & 1200 & 1200 & 1200 & 1200 & 1200 & 1200 & 1200 & 1200 & 1200 \\
 &  & PG ($\delta_{\text{PG}}=10$) & 1200 & 1200 & 1200 & 1200 & 1200 & 1200 & 1200 & 1200 & 1200 & 1200 & 1200 & 1200 \\
 &  & PG ($\delta_{\text{PG}}=1$) & 1200 & 1200 & 1200 & 1200 & 1200 & 1200 & 1200 & 1200 & 1200 & 1200 & 1200 & 1200 \\
 &  & MM & 394.5 & 422.8 & 411.6 & 383.3 & 435.0 & 428.4 & 436.4 & 455.4 & 745.3 & 659.1 & 818.6 & 686.8 \\ \cline{2-15} 
 & \multirow{4}{*}{\begin{tabular}[c]{@{}c@{}}Num.\\ Iterations\end{tabular}} & PG ($\delta_{\text{PG}}=100$) & 15.0 & 15.7 & 14.0 & 13.7 & 15.0 & 15.0 & 14.3 & 14.3 & 14.7 & 13.7 & 14.7 & 14.3 \\
 &  & PG ($\delta_{\text{PG}}=10$) & 15.0 & 14.3 & 14.0 & 13.3 & 15.3 & 14.7 & 14.0 & 13.0 & 14.7 & 14.0 & 13.7 & 14.3 \\
 &  & PG ($\delta_{\text{PG}}=1$) & 14.7 & 14.0 & 14.0 & 13.0 & 15.0 & 14.7 & 14.3 & 15.0 & 14.7 & 14.7 & 15.7 & 18.0 \\
 &  & MM & 4.0 & 4.0 & 4.0 & 4.0 & 4.3 & 4.3 & 4.7 & 5.0 & 8.3 & 8.0 & 11.0 & 10.0 \\ \cline{2-15} 
 & \multirow{4}{*}{\begin{tabular}[c]{@{}c@{}}Objective\\ Function\end{tabular}} & PG ($\delta_{\text{PG}}=100$) & -166.61 & -93.60 & -167.52 & -122.04 & -245.08 & -260.43 & -136.56 & -166.61 & -203.38 & -171.90 & -204.54 & -140.68 \\
 &  & PG ($\delta_{\text{PG}}=10$) & -98.66 & -48.80 & -23.47 & -127.40 & -104.12 & -102.30 & -31.37 & -266.86 & -141.46 & -70.36 & -106.12 & -77.64 \\
 &  & PG ($\delta_{\text{PG}}=1$) & 21.38 & 34.04 & 27.04 & -68.93 & -21.31 & -10.76 & -3.22 & -1.65 & -4.51 & -3.68 & 1.54 & -0.86 \\
 &  & MM & 56.95 & 53.08 & 43.96 & 38.09 & 31.46 & 27.62 & 19.00 & 13.76 & 13.48 & 10.43 & 4.18 & 1.19\\
 \bottomrule
\end{tabular}
}
\end{table}

\Cref{tab:results-same-theta} presents the results of experiments where we assume $\theta_1 = \cdots = \theta_N = \theta \in \{1/5, 1/3, 1, 2\}$ per minute. Accordingly, the mean sojourn times for riders are $\{5, 3, 1, 0.5\}$ minutes, respectively. The results demonstrate that the MM algorithm substantially reduces both the running time and the number of iterations compared to the PG method. For $N \in \{100, 200\}$, all algorithms converge within the 20-minute limit. Compared to PG with three different step sizes $\delta_{\text{PG}} \in \{ 100, 10, 1 \}$, the MM algorithm reduces average running time by 81.5\%, 78.7\%, and 92.0\%, respectively, and reduces the average number of iterations by 83.1\%, 79.7\%, and 93.9\%. Moreover, the MM algorithm achieves a higher objective value---on average, 2.78\%, 2.61\%, and 1.44\% higher than the PG methods. When $N = 1000$, the PG algorithms fail to converge within 20 minutes.

Besides, we also observe that the performance of PG is sensitive to step size selection.
Although a step size of 100 or 10 yields a better average performance when $N \leq 200$, 
it is not optimal in every instance; for example, a step size of 1 performs better when $c=1.1$ and $\theta \leq 1/3$, or when $N = 1000$. In contrast, MM demonstrates strong robustness and outperforms PG across all cases and step-size rules.

It can also be observed that as the number of types $N$ increases, the running time of both PG and MM increases, primarily because solving $c(\boldsymbol{\lambda})$ involves $N + N^2$ variables at each iteration. However, the number of iterations required by the MM algorithm does not scale proportionally with $N$, and in many cases remains nearly constant at a single-digit value. Even when $N = 1000$, the MM algorithm remains efficient and consistently produces results within a short CPU time.

Appendix~\ref{appx:additional_exp} contains additional experiments. 
In Appendix~\ref{appendix:different_theta}, we present results when $\theta_i$ differs. 
In Appendix~\ref{appendix:exponential_demand}, we report results under an alternative exponential demand model. In Appendix~\ref{appendix:simulation}, we run simulations by implementing the optimal prices obtained from our MM algorithm under a dynamic matching policy, and compare them with a baseline pricing scheme that ignores finite demand patience (i.e., where constraint~\eqref{equ:cn-bound} is relaxed). We find that our pricing approach consistently outperforms the baseline.

\section{Extensions}
\label{sec:pricing_extension}

In this section, we consider several practical extensions of the pricing problem \eqref{equ:gn} that commonly arise in real-world operations. In particular, we study the following three extensions: (i) pricing with limited supply (e.g., limited pool of drivers or couriers); (ii) multi-product pricing, in which the platform simultaneously offers differentiated services (e.g., solo versus shared rides in ridesharing, or a premium direct-delivery option in food delivery); and (iii) pricing in the presence of match-related disutility (e.g., detour or additional waiting time).

\subsection{Pricing with Limited Supply}

We first consider a setting in which the platform operates under limited supply. Let $L$ denote the total supply capacity (e.g., the number of available drivers or couriers). By Little's Law, the system must satisfy the following constraint:
$$
\sum_{i\in[N]} \sum_{j\in[N]} t_{(i,j)} x_{i,j} + \sum_{i\in[N]} t_{(i)} y_i \leq L,
$$
where $t_{(i,j)}$ denotes the service duration for a matched pair of types $i$ and $j$, and $t_{(i)}$ denotes the duration for a solo service of type $i$. Incorporating this constraint into the lower-level matching problem yields the capacitated cost function:
\begin{subequations} \label{equ:cn-capacity}
\begin{align} 
\tilde{c}(\boldsymbol{\lambda}) = \min_{\boldsymbol{x},\boldsymbol{y}} \quad& \sum_{i\in[N]} \sum_{j\in[N]}  c_{(i,j)} x_{i,j} + \sum_{i\in[N]}  c_{(i)} y_i\\
\text{s.t.} \quad & \sum_{i\in[N]} \sum_{j\in[N]} t_{(i,j)} x_{i,j} + \sum_{i\in[N]} t_{(i)} y_i \leq L, \label{equ:cn-capacity-constr}\\
& \eqref{equ:cn-flow}- \eqref{equ:cn-nonneg-y}, \notag
\end{align}
\end{subequations}
where we define $\tilde{c}(\boldsymbol{\lambda})=+\infty$ if the program is infeasible. The upper-level pricing problem is then formulated as
$
\max_{\boldsymbol{\lambda} \in \Lambda}{\sum_{i\in[N]}~\lambda_ip_i(\lambda_i) - \tilde{c}(\boldsymbol{\lambda})}
$.

In many transportation and logistics applications, such as ridesharing, food delivery, and LTL freight shipping, operational costs are proportional to the trip length. Following the notation in \Cref{subsec:chicago}, we assume that $c_{(i,j)} = c\ell_{(i,j)}$ and $c_{(i)}=c\ell_{(i)}$, where $\ell_{(i,j)}$ and $\ell_{(i)}$ represent trip distance and $c$ is the unit cost per distance. Assuming a uniform speed $v$, the service times are given by $t_{(i,j)} = \ell_{(i,j)}/v = c_{(i,j)}/(cv)$ and $t_{(i)} = \ell_{(i)}/v = c_{(i)}/(cv)$. Under these assumptions, the capacity constraint \eqref{equ:cn-capacity-constr} is mathematically equivalent to a budget constraint on the operational cost---the optimal cost must not exceed $cvL$. Consequently, rather than complicating the lower-level model, we retain the original cost function $c(\boldsymbol{\lambda})$ and impose the capacity constraint directly in the upper-level pricing problem:
\begin{subequations}
\label{equ:gn-capacity}
\begin{align}
\max_{\boldsymbol{\lambda} \in \Lambda}\quad&{\sum_{i\in[N]}~\lambda_ip_i(\lambda_i) - c(\boldsymbol{\lambda})} , \\
\text{s.t.} \quad & c(\boldsymbol{\lambda}) \leq cvL.
\end{align}
\end{subequations}
To solve \eqref{equ:gn-capacity}, we adopt a Lagrangian relaxation approach:
\begin{align} \notag%
\min_{\mu \geq 0}\  \sup_{\boldsymbol{\lambda} \in \Lambda}\quad&{\sum_{i\in[N]}~\lambda_ip_i(\lambda_i) - (1+\mu)c(\boldsymbol{\lambda})} + \mu  cv L.
\end{align}
For a fixed dual variable $\mu \geq 0$, the inner maximization retains the concave-minus-concave structure and can be solved using the MM algorithm (\Cref{subsec:pricing_mm}). The optimal multiplier $\mu^*$ can then be determined via a one-dimensional line search.

\subsection{Multi-Product Pricing}

We now consider another variant of our problem, where the platform provides two or more products. As shown in \Cref{fig:screenshots}, the multi-product setting is common in applications such as ridesharing and online food delivery. For ridesharing, riders are allowed to choose from multiple service options, some options (e.g., UberX Share) offer economy shared rides, whereas some options (e.g., UberX and UberXL) offer solo rides. For online food delivery, customers can choose between the default option that allows batched delivery and the prioritized ``direct to you'' option.

Denote by $M$ the total number of products provided by the platform. The platform now quotes a price vector $\boldsymbol{p}_i = (p_{im})_{m\in[M]}$ to potential demands of type $i\in[N]$, where $p_{im}$ is the price for the $m$-th product, resulting in arrival rates $\boldsymbol{\lambda}_i =(\lambda_{im})_{m\in[M]}$. Analogous to \Cref{sec:pricing}, we make the assumptions on the one-to-one correspondence between $\boldsymbol{\lambda}_i$ and $\boldsymbol{p}_i$, as well as the concavity of the revenue function with respect to the arrival rate. Such assumptions fit common multi-product choice models such as Multinomial Logit and Nested Logit \citep{li2011pricing}.

The multi-product pricing problem then reduces to finding the optimal vector $\boldsymbol{\lambda} = (\lambda_{im})_{i \in [N], m\in[M]}$ that maximizes total profit, formulated as
\begin{align}
\max_{\boldsymbol{\lambda} \in \Lambda}{\sum_{i\in[N]}\sum_{m\in[M]}~\lambda_{im} p_{im}(\boldsymbol{\lambda}_i) - \sum_{m\in[M]}c_m(\boldsymbol{\lambda}_{\cdot m})},\notag %
\end{align}
where $\boldsymbol{\lambda}_{\cdot m} = (\lambda_{im})_{i\in[N]}$ and $c_m(\boldsymbol{\lambda}_{\cdot m})$ represents the total cost of providing the $m$-th product. Specifically, when the $m$-th product provides only solo services, $c_m(\boldsymbol{\lambda}_{\cdot m})$ is linear; when the $m$-th product provides shared services, $c_m(\boldsymbol{\lambda}_{\cdot m})$ follows the formulation \eqref{equ:cn} and the (weak) concavity results still hold. Therefore, the multi-product pricing problem still preserves a concave-minus-concave structure, and the MM algorithm remains applicable.

\subsection{Pricing with Matching Disutility}

As a final extension, we consider a scenario where the demand entry decision depends not only on the quoted price $p_i$ but also on the disutility associated with matching. This effect is prevalent in real-world applications. For instance, ridesharing passengers often perceive disutility from detours required to accommodate pooled riders, while food delivery customers may experience dissatisfaction due to additional waiting times caused by order batching.

Let $\sigma_{i,j}$ denote the normalized, unit-free measure of disutility incurred by a type-$i$ demand when matched with a type-$j$ demand. The expected normalized disutility for a type-$i$ demand is then given by $\sum_{j\in[N]} (x_{i,j}+x_{j,i})\sigma_{i,j}/\lambda_i$, where $x_{i,j}$ represents the match rate between type-$i$ active demand and type-$j$ passive demand under the optimal matching policy. Assuming a linear demand function and following the notation in \Cref{subsec:chicago}, the inverse demand function is given by:
$$
p_i(\lambda_i) = \ell_{(i)}\left[1-\frac{\sum_{j\in[N]} (x_{i,j}+x_{j,i})\sigma_{i,j}}{\lambda_i}-\frac{\lambda_i}{\overline{\lambda}_i}\right], \quad \forall i\in[N].
$$
Substituting this into the objective yields the total profit function:
\begin{align*}
& \sum_{i\in[N]}~\lambda_ip_i(\lambda_i) - \left[ \sum_{i\in[N]} \sum_{j\in[N]} c_{(i,j)} x_{i,j} + \sum_{i\in[N]} c_{(i)} y_i\right] \\
= & \sum_{i\in[N]}~\ell_{(i)}\lambda_i\left[1-\frac{\lambda_i}{\overline{\lambda}_i}\right] - \left[ \sum_{i\in[N]} \sum_{j\in[N]} (c_{(i,j)} + \ell_{(i)} \sigma_{i,j} + \ell_{(j)} \sigma_{j,i}) x_{i,j} + \sum_{i\in[N]} c_{(i)} y_i\right],
\end{align*}
where $y_i$ is the rate of type-$i$ demand unmatched under the optimal matching policy.

This transformed objective preserves a similar structure to that in \eqref{equ:gn} and \eqref{equ:cn}. Consequently, the MM algorithm remains applicable by simply solving the problem under an adjusted cost structure, where the effective matching cost is defined as $\tilde{c}_{(i,j)} = c_{(i,j)} + \ell_{(i)} \sigma_{i,j} + \ell_{(j)} \sigma_{j,i}$ and the solo cost remains $\tilde{c}_{(i)} = c_{(i)}$.

\section{Concluding Remarks}
\label{sec:conclusion}

Motivated by the growing use of pricing in centralized matching markets, we study the value function of the linear program introduced by \citet{aouad2022dynamic} for cost-minimizing dynamic stochastic matching, with a particular focus on its (weak) concavity with respect to demand arrival rates. Our main result shows that weak concavity is guaranteed if every optimal basic feasible solution of the LP is nondegenerate. Building on this result, we further establish that weak concavity holds whenever all demand types have strictly positive unmatched rates. Leveraging this insight, we show that weak concavity is guaranteed when there are at most three demand types with a common patience level. More generally, for an arbitrary number of types, we find that (weak) concavity is more likely to hold when demands are either highly patient or highly impatient, when patience levels are similar across types, or when matching efficiencies are sufficiently heterogeneous.

We then turn our attention to the pricing application and design an efficient MM algorithm that requires little stepsize tuning. To evaluate its performance, we apply our MM algorithm to the Chicago ridesharing dataset. Experimental results demonstrate that, compared to a projected gradient-based method whose performance is highly sensitive to stepsize selection, our approach significantly reduces both running time and the number of iterations, while simultaneously achieving better optimality. Several extensions of this pricing application are also discussed.

Our final remark is that the nonzero unmatched-rate conditions required in \Cref{prop:n-weak-concave-nonzero} for weak concavity arise naturally in dynamic stochastic matching systems when agents have limited patience. Indeed, the counterexamples that violate weak concavity in Appendix~\ref{sec:examples} no longer apply once one considers the value function of the MDP. It is therefore reasonable to conjecture that the optimal value function of the corresponding MDP is always weakly concave. Nevertheless, computing the exact optimal value function (and its supergradient with respect to the arrival rates) remains subject to the curse of dimensionality. Our theoretical and empirical results provide evidence that highly effective and scalable price optimization algorithms can be developed by using the LP value as a proxy for the true matching cost.

\ACKNOWLEDGMENT{%
The authors thank Ying Cui from UC Berkeley for helpful discussions. Chen acknowledges the support of the INFORMS Transportation Science and Logistics Society. Yan also acknowledges the support of the National
Science Foundation under Award Number 2517861.
}%

\begin{APPENDICES}
\section{MDP Formulation}
\label{appendix:mdp}

In this section, we formulate the problem of finding an optimal matching policy under a given price vector $\boldsymbol{\lambda}$ as an average-cost, continuous-time Markov Decision Process (MDP). Let the state of the system at time $t$ be $\mathbf{s}_t = (s_{t,1}, \dots, s_{t,N}) \in \mathcal{S}$, where $s_{t,i} \in \mathbb{Z}_{\ge0}$ is the number of type $i$ demands at time $t$ waiting to be matched, and $\mathcal{S}$ is the state space. The platform deploys a state-dependent matching policy $\phi_i: \mathcal{S} \mapsto [N] \cup \{0\}$---when a type $i$ demand arrives at state $\mathbf{s}_t$, this matching policy either (1) immediately matches the arriving request with some available active demand of type $j \in [N]$ with $s_{t,j} > 0$, or (2) lets the arriving request wait in the system for future dispatch, which we label as option 0. 
Denote the matching policy by $\boldsymbol{\phi}=(\phi_i)_{i\in[N]}$. %
Let $M^{\boldsymbol{\lambda},\boldsymbol{\phi}}(t)$ be the set of matches formed up to time $t$, and $R^{\boldsymbol{\lambda},\boldsymbol{\phi}}(t)$ be the set of unmatched events up to time $t$. The total cost incurred by time $t$ is given by
$$
C^{\boldsymbol{\lambda},\boldsymbol{\phi}}(t) = \sum_{(i,j) \in M^{\boldsymbol{\lambda},\boldsymbol{\phi}}(t)} c_{(i,j)} + \sum_{i \in R^{\boldsymbol{\lambda},\boldsymbol{\phi}}(t)} c_{(i)}.
$$
We consider stationary policies and hereafter drop the subscript $t$ in the state variable $\mathbf{s}_t$.

The objective of the platform is to minimize the expected long-run average cost via
$$
c^{\text{MDP}}(\boldsymbol{\lambda}) := \inf_{\boldsymbol{\phi}} \limsup_{T \to \infty} \frac{1}{T} \mathbb{E} [C^{\boldsymbol{\lambda},\boldsymbol{\phi}}(T)]
$$
where $c^{\text{MDP}}(\boldsymbol{\lambda})$ is defined as the minimum expected long-run average cost per unit time and the expectation is taken over all the randomness of demand arrivals, matching, and abandonment.

We state a prior result regarding a property under an optimal matching policy.
\begin{lemma}[\citet{yan2023pricing}, Lemma 1]
    There exists an optimal matching policy $\boldsymbol{\phi}$ under which any state $\mathbf{s}$ with $s_i \geq 3$ for some $i \in [N]$ is transient.
\end{lemma}
This result suggests that, without loss of optimality, one can limit the state space to $\mathcal{S} = \{0, 1, 2\}^N$. This renders the state space finite. As a consequence, under any matching policy, the resulting continuous-time Markov chain is a finite-state \textit{unichain}. We can thus rewrite our continuous-time MDP as a discrete-time MDP using the uniformization method \citep{puterman2014markov}. Let $M = \sum_{i=1}^N (\lambda_i + 2\theta_i)$ be the maximum rate of transition. According to Theorem 8.4.5 of \citet{puterman2014markov}, an optimal policy exists and there exists a \textit{relative value function} $V: \mathcal{S} \mapsto \mathbb{R}$ satisfying the following optimality equation:

\begin{align*}
    V(\mathbf{s}) + \frac{c^{\text{MDP}}(\boldsymbol{\lambda}) }{M} = \frac{1}{M} \Bigg( & \sum_{j \in [N]} \lambda_j \min \Bigg\{ \underbrace{V(\mathbf{s} + \mathbf{e}_j)}_{\text{let } j \text{ wait}}, \min_{i \in [N]: s_i > 0} \underbrace{(c_{(i,j)} + V(\mathbf{s} - \mathbf{e}_i))}_{\text{match } j \text{ with active } i} \Bigg\} \\
    & + \sum_{i \in [N]} s_i \theta_i \underbrace{(c_{(i)} + V(\mathbf{s} - \mathbf{e}_i))}_{\text{reneging of type } i} \\
    & + \underbrace{\left( M - \sum_{j \in [N]} \lambda_j - \sum_{i \in [N]} s_i \theta_i \right)}_{\text{no event occurs}} V(\mathbf{s}) \Bigg), \quad \forall \mathbf{s} \in \mathcal{S},
\end{align*}
where $c^{\text{MDP}}(\boldsymbol{\lambda}) /M$ is the minimum cost per period, and $\mathbf{e}_j$ is an $N$-dimensional unit vector whose $j^{\text{th}}$ element is one and the rest are zeroes.

\section{Additional Theoretical Results}
\label{sec:appendix_theory}

In this section, we present additional sufficient conditions for the weak concavity of $c(\boldsymbol{\lambda})$, by imposing both lower bounds on $\underline{\lambda}_i$ and upper bounds on $\overline{\lambda}_i$. We first present the result for the case of homogeneous patience levels.
\begin{proposition} \label{prop:n-weak-concave-same-theta-both-bound}
 When $\theta_i = \theta$, $\forall i\in[N]$, $c(\boldsymbol{\lambda})$ is weakly concave on $\Lambda$ if there exists an integer $K \geq 5$ such that
$$
\theta \frac{e_{(K)}}{1-2e_{(K)}} < \underline{\lambda}_i < \overline{\lambda}_i < \theta \frac{1}{K-4} \left(1+\frac{e_{(K)}}{1-2e_{(K)}}\right),\ \forall i \in [N].
$$
\end{proposition}
\Cref{prop:n-weak-concave-same-theta-both-bound} defines a set of feasible intervals for arrival rates parameterized by $K \geq 5$. As $K$ increases, $e_{(K)}$ decreases, leading to smaller values for both the lower and upper bounds, thus shifting the feasible intervals. The weak concavity holds as long as there exists a value of $K$, such that both $\underline{\lambda}_i$ and $\overline{\lambda}_i$ fall within the bounds. Also note that \Cref{prop:n-weak-concave-same-theta-upper-bound} is a special case of this proposition by letting $K = N+1$.

The next proposition gives the results when the patience levels are heterogeneous. 
\begin{proposition} \label{prop:n-weak-concave-both-bound}
$c(\boldsymbol{\lambda})$ is weakly concave on $\Lambda$ when there exists an integer $K \geq 3$ such that $(K - 2)\nu  > 2$, $e_{(K)} \neq 0.5$ and 
$$
\theta_i \frac{e_{(K)}}{1-2e_{(K)}} < \underline{\lambda}_i < \overline{\lambda}_i < \theta_i \max\left\{\frac{1}{K-2}, \frac{1}{(K-2)\nu-2}\right\}\left(1+\frac{1}{\nu}\frac{e_{(K)}}{1-2e_{(K)}}\right),\ \forall i \in [N].
$$
\end{proposition} 
Similar to \Cref{prop:n-weak-concave-same-theta-both-bound}, the bound depends on the choice of $K$ and the value of $\nu$. Again, the parameter $K$ determines the maximum number of other demand types that can be matched with demand type $i\in[N]$, and $\nu$ measures the difference in patience levels across demand types. As $\nu$ decreases---indicating more similar patience levels among demands---the upper bounds become less restrictive, expanding the allowable region. Therefore, a $\nu$ close to 1 helps to achieve weak concavity. Also note that \Cref{prop:n-weak-concave-upper-bound} is a special case of this proposition by letting $K = N+1$.

\section{Examples} \label{sec:examples}

\begin{example}[Non-concavity of $c(\boldsymbol{\lambda})$ when $N=3$, $\theta_1=\theta_2=\theta_3$] \label{example:non-concave-same-theta}
Consider the case when $N=3, \theta_1 = \theta_2 = \theta_3 = 1$. Let $[c_{(i,j)}]_{i,j\in\{1,2,3\}}$ be
$
\begin{bmatrix}
    0.72 & 1.23 & 1.25 \\
    1.23 & 0.95 & 1.02 \\
    1.25 & 1.02 & 1
\end{bmatrix}
$. The function $c(\boldsymbol{\lambda})$ is non-concave at the point $\boldsymbol{\lambda} = (1, 0.2, 0.2)$---the function is smooth at this point but the largest eigenvalue is $0.03 > 0$. 
\end{example}

\begin{example}[Non-concavity of $c(\boldsymbol{\lambda})$ when $N=2, \theta_1\neq\theta_2$] \label{example:non-concave-diff-theta}
 Let $N=2$, $\theta_1 = 1$, $\theta_2 = 2$, $c_{(1)} = c_{(2)} = 1$, and $c_{(1,2)} = c_{(2,1)} = 1.01$. The function $c(\boldsymbol{\lambda})$ is non-concave at the point $\boldsymbol{\lambda} = (0.1, 0.1)$---the function is smooth at this point but the largest eigenvalue of its Hessian matrix is $0.03 > 0$. \end{example}

\begin{example}[Violation of weak concavity of $c(\boldsymbol{\lambda})$ when $N=2, \theta_1\neq\theta_2$] \label{example:non-weak-concavity}
Let $N=2$, $\theta_1 = 1$, $\theta_2 = 8$, $c_{(1)} = c_{(2)} = 1$, and $c_{(1,2)} = c_{(2,1)} = 1.05$. According to \Cref{prop:n2-concave}, we have $\tau_1 = 4.2$ and $\tau_2 = 10.44$, and $c(\boldsymbol{\lambda})$ is weak concave when $\underline{\lambda}_2>18.57$. We can show that this bound is quite tight. For example, consider the point $(\lambda_1,\lambda_2)=(19.5, 18.5)$, we can calculate the left and right partial derivatives with respect to $\lambda_1$ at this point:
$
    {\partial c(\boldsymbol{\lambda)}}/{\partial \lambda_1^-} \approx 0.49986 < {\partial c(\boldsymbol{\lambda)}}/{\partial \lambda_1^+} = 0.5.
$
For any $\rho > 0$, a necessary condition for $c(\boldsymbol{\lambda)} - \rho\|\boldsymbol{\lambda}\|^2/2$ to be concave is that for any point $\boldsymbol{\lambda}\in\Lambda$, the left partial derivative is always larger than or equal to the right partial derivative (see e.g., \citealt{beckenbach1948convex}). However, the left partial derivative $c(\boldsymbol{\lambda)} - \rho\|\boldsymbol{\lambda}\|^2/2$ is always smaller than its right partial derivative in our example at $(\lambda_1,\lambda_2)$, since
$$
\frac{\partial c(\boldsymbol{\lambda)} - \rho\|\boldsymbol{\lambda}\|^2/2}{\partial \lambda_1^-} = \frac{\partial c(\boldsymbol{\lambda)}}{\partial \lambda_1^-} - \rho \lambda_1 < \frac{\partial c(\boldsymbol{\lambda)}}{\partial \lambda_1^+} - \rho \lambda_1 = \frac{\partial c(\boldsymbol{\lambda)} - \rho\|\boldsymbol{\lambda}\|^2/2}{\partial \lambda_1^+}.
$$
Therefore, in our example, $c(\boldsymbol{\lambda)}$ is not weakly concave.

\end{example}

\begin{example}[Violation of weak concavity of $c(\boldsymbol{\lambda})$ when $N=2, \theta_1\neq\theta_2$ and $e_{1,2}=0.5$] \label{example:non-weak-concavity-special}
Let $N=2$, $\theta_1 = 1$, $\theta_2 = 8$, $c_{(1)} = c_{(2)} = c_{(1,2)} = c_{(2,1)} = 1$, then we have $e_{1,2}=0.5$ in this case, and $\tau_1=5>0$. 

We can show that in this example, weak concavity is violated even for large values of $\lambda_1$ and $\lambda_2$, by again comparing left and right partial derivatives. Let $(\lambda_1, \lambda_2) = (11,10)$, we have
$
    \partial c(\boldsymbol{\lambda)}/\partial \lambda_1^- \approx 0.43574 < \partial c(\boldsymbol{\lambda)}/\partial \lambda_1^+ = 0.5.
$
Let $(\lambda_1, \lambda_2) = (101,100)$, we have
$
    {\partial c(\boldsymbol{\lambda)}}/{\partial \lambda_1^-} \approx 0.48837 < {\partial c(\boldsymbol{\lambda)}}/{\partial \lambda_1^+} = 0.5.
$
Let $(\lambda_1, \lambda_2) = (1001,1000)$, we have
$
    {\partial c(\boldsymbol{\lambda)}}/{\partial \lambda_1^-} \approx 0.49876 < {\partial c(\boldsymbol{\lambda)}}/{\partial \lambda_1^+} = 0.5.
$
Let $(\lambda_1, \lambda_2) = (10001,10000)$, we have
$
    {\partial c(\boldsymbol{\lambda)}}/{\partial \lambda_1^-} \approx 0.49988 < {\partial c(\boldsymbol{\lambda)}}/{\partial \lambda_1^+} = 0.5.
$
Similar to \Cref{example:non-weak-concavity}, $c(\boldsymbol{\lambda})$ is not weakly concave at these points.
\end{example}

\begin{example}[Multimodality of $g(\boldsymbol{\lambda})$] \label{example:multimodality}
Let $N=2$, $\theta_1 = \theta_2 = 0.3$,  $c_{(1)} = c_{(2)} = 1.1$, and $c_{(1,2)} = c_{(2,1)} = 1.65$. The price-conversion function is given by $p_i(\lambda_i)=1-\lambda_i$, $i=1,2$. Then $g(\boldsymbol{\lambda})$ is multimodal and not differentiable, as shown in \Cref{fig:example_multi_modality}.
\end{example}
\begin{figure}
    \centering
    \includegraphics[width=0.55\linewidth]{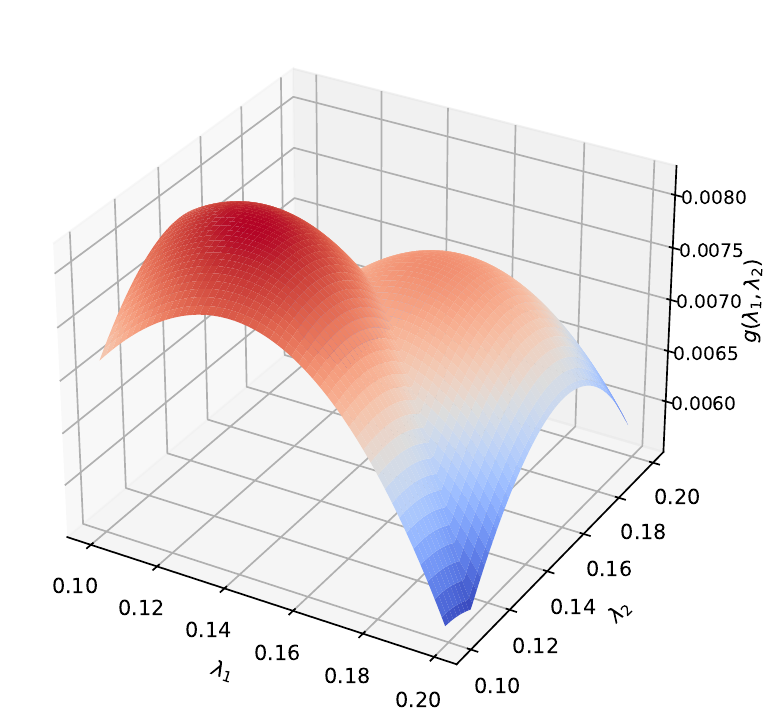}
    \caption{Example showing the multimodality and non-differentiability of $g(\boldsymbol{\lambda})$ when $N=2$.}
    \label{fig:example_multi_modality}
\end{figure}

\end{APPENDICES}

\bibliographystyle{informs2014trsc} %
\bibliography{reference} %

\begin{thebibliography}{36}
\providecommand{\natexlab}[1]{#1}
\providecommand{\url}[1]{\texttt{#1}}
\providecommand{\urlprefix}{URL }

\bibitem[{Adler \protect\BIBand{} Monteiro(1992)}]{adler1992geometric}
Adler I, Monteiro RD, 1992 \emph{A geometric view of parametric linear programming}. \emph{Algorithmica} 8:161--176.

\bibitem[{Akbarpour, Li, \protect\BIBand{} Gharan(2020)}]{akbarpour2020thickness}
Akbarpour M, Li S, Gharan SO, 2020 \emph{Thickness and information in dynamic matching markets}. \emph{Journal of Political Economy} 128(3):783--815.

\bibitem[{AmaniHamedani, Aouad, \protect\BIBand{} Saberi(2025)}]{amanihamedani2025adaptive}
AmaniHamedani A, Aouad A, Saberi A, 2025 \emph{Adaptive approximation schemes for matching queues}. \emph{Proceedings of the 57th Annual ACM Symposium on Theory of Computing}, 1454--1464.

\bibitem[{Aouad \protect\BIBand{} Sar{\i}ta{\c{c}}(2022)}]{aouad2022dynamic}
Aouad A, Sar{\i}ta{\c{c}} {\"O}, 2022 \emph{Dynamic stochastic matching under limited time}. \emph{Operations Research} 70(4):2349--2383.

\bibitem[{Arnosti \protect\BIBand{} Simon(2025)}]{arnosti2025greedy}
Arnosti N, Simon F, 2025 \emph{Greedy dynamic matching}. \emph{arXiv preprint arXiv:2507.04551} .

\bibitem[{Beckenbach(1948)}]{beckenbach1948convex}
Beckenbach E, 1948 \emph{Convex functions}. \emph{Bulletin of the American Mathematical Society} 54(5):439--460.

\bibitem[{Bertsimas \protect\BIBand{} Tsitsiklis(1997)}]{bertsimas1997introduction}
Bertsimas D, Tsitsiklis JN, 1997 \emph{Introduction to linear optimization}, volume~6 (Athena scientific Belmont, MA).

\bibitem[{Bjorndahl \protect\BIBand{} Karshon(2010)}]{bjorndahl2010revisiting}
Bjorndahl C, Karshon Y, 2010 \emph{Revisiting tietze--nakajima: local and global convexity for maps}. \emph{Canadian Journal of Mathematics} 62(5):975--993.

\bibitem[{Boyd \protect\BIBand{} Vandenberghe(2004)}]{Boyd_Vandenberghe_2004}
Boyd S, Vandenberghe L, 2004 \emph{Convex Optimization} (Cambridge University Press).

\bibitem[{Chen \protect\BIBand{} Hu(2024)}]{chen2024courier}
Chen M, Hu M, 2024 \emph{Courier dispatch in on-demand delivery}. \emph{Management Science} 70(6):3789--3807.

\bibitem[{Collina et~al.(2020)Collina, Immorlica, Leyton-Brown, Lucier, \protect\BIBand{} Newman}]{collina2020dynamic}
Collina N, Immorlica N, Leyton-Brown K, Lucier B, Newman N, 2020 \emph{Dynamic weighted matching with heterogeneous arrival and departure rates}. \emph{Web and Internet Economics: 16th International Conference, WINE 2020, Beijing, China, December 7--11, 2020, Proceedings 16}, 17--30 (Springer).

\bibitem[{Eom \protect\BIBand{} Toriello(2025)}]{eom2024batching}
Eom M, Toriello A, 2025 \emph{Batching and greedy policies: How good are they in dynamic matching?} \emph{Manufacturing \& Service Operations Management} .

\bibitem[{Feng, Niazadeh, \protect\BIBand{} Saberi(2024)}]{feng2024two}
Feng Y, Niazadeh R, Saberi A, 2024 \emph{Two-stage stochastic matching and pricing with applications to ride hailing}. \emph{Operations Research} 72(4):1574--1594.

\bibitem[{Gal \protect\BIBand{} Nedoma(1972)}]{gal1972multiparametric}
Gal T, Nedoma J, 1972 \emph{Multiparametric linear programming}. \emph{Management Science} 18(7):406--422.

\bibitem[{Gupta(2022)}]{gupta2024greedy}
Gupta V, 2022 \emph{Greedy algorithm for multiway matching with bounded regret}. \emph{Operations Research} 72(3):1139--1155.

\bibitem[{Horn \protect\BIBand{} Johnson(2012)}]{horn2012matrix}
Horn RA, Johnson CR, 2012 \emph{Matrix analysis} (Cambridge university press).

\bibitem[{Hu \protect\BIBand{} Zhou(2022)}]{hu2022dynamic}
Hu M, Zhou Y, 2022 \emph{Dynamic type matching}. \emph{Manufacturing \& Service Operations Management} 24(1):125--142.

\bibitem[{Huang et~al.(2018)Huang, Kang, Tang, Wu, Zhang, \protect\BIBand{} Zhu}]{huang2018match}
Huang Z, Kang N, Tang ZG, Wu X, Zhang Y, Zhu X, 2018 \emph{How to match when all vertices arrive online}. \emph{Proceedings of the 50th Annual ACM SIGACT Symposium on Theory of computing}, 17--29.

\bibitem[{Hunter \protect\BIBand{} Lange(2004)}]{hunter2004tutorial}
Hunter DR, Lange K, 2004 \emph{A tutorial on {MM} algorithms}. \emph{The American Statistician} 58(1):30--37.

\bibitem[{Kerimov, Ashlagi, \protect\BIBand{} Gurvich(2024)}]{kerimov2024dynamic}
Kerimov S, Ashlagi I, Gurvich I, 2024 \emph{Dynamic matching: Characterizing and achieving constant regret}. \emph{Management Science} 70(5):2799--2822.

\bibitem[{Kerimov, Ashlagi, \protect\BIBand{} Gurvich(2025)}]{kerimov2025optimality}
Kerimov S, Ashlagi I, Gurvich I, 2025 \emph{On the optimality of greedy policies in dynamic matching}. \emph{Operations Research} 73(1):560--582.

\bibitem[{Kessel et~al.(2022)Kessel, Shameli, Saberi, \protect\BIBand{} Wajc}]{kessel2022}
Kessel K, Shameli A, Saberi A, Wajc D, 2022 \emph{The stationary prophet inequality problem}. \emph{Proceedings of the 23rd ACM Conference on Economics and Computation}, 243–244, EC '22 (New York, NY, USA: Association for Computing Machinery).

\bibitem[{Li \protect\BIBand{} Huh(2011)}]{li2011pricing}
Li H, Huh WT, 2011 \emph{Pricing multiple products with the multinomial logit and nested logit models: Concavity and implications}. \emph{Manufacturing \& Service Operations Management} 13(4):549--563.

\bibitem[{Lu et~al.(2014)Lu, Tang, Yan, \protect\BIBand{} Lin}]{lu2014generalized}
Lu C, Tang J, Yan S, Lin Z, 2014 \emph{Generalized nonconvex nonsmooth low-rank minimization}. \emph{Proceedings of the IEEE conference on computer vision and pattern recognition}, 4130--4137.

\bibitem[{Ma, Ma, \protect\BIBand{} Romero(2025)}]{ma2025potential}
Ma H, Ma W, Romero M, 2025 \emph{Potential-based greedy matching for dynamic delivery pooling}. \emph{arXiv preprint arXiv:2502.16862} .

\bibitem[{Milgrom \protect\BIBand{} Segal(2002)}]{milgrom2002envelope}
Milgrom P, Segal I, 2002 \emph{Envelope theorems for arbitrary choice sets}. \emph{Econometrica} 70(2):583--601.

\bibitem[{Nakajima(1928)}]{nakajima1928konvexe}
Nakajima S, 1928 \emph{{\"U}ber konvexe kurven und fl{\"a}chen}. \emph{Tohoku Mathematical Journal, First Series} 29:227--230.

\bibitem[{Ok(2011)}]{ok2011real}
Ok EA, 2011 \emph{Real analysis with economic applications} (Princeton University Press).

\bibitem[{{\"O}zkan \protect\BIBand{} Ward(2020)}]{ozkan2020dynamic}
{\"O}zkan E, Ward AR, 2020 \emph{Dynamic matching for real-time ride sharing}. \emph{Stochastic Systems} 10(1):29--70.

\bibitem[{Patel \protect\BIBand{} Wajc(2024)}]{patel2024combinatorial}
Patel N, Wajc D, 2024 \emph{Combinatorial stationary prophet inequalities}. \emph{Proceedings of the 2024 Annual ACM-SIAM Symposium on Discrete Algorithms (SODA)}, 4605--4630 (SIAM).

\bibitem[{Puterman(2014)}]{puterman2014markov}
Puterman ML, 2014 \emph{Markov decision processes: discrete stochastic dynamic programming} (John Wiley \& Sons).

\bibitem[{Tietze(1928)}]{tietze1928konvexheit}
Tietze H, 1928 \emph{{\"U}ber konvexheit im kleinen und im gro{\ss}en und {\"u}ber gewisse den punkten einer menge zugeordnete dimensionszahlen}. \emph{Mathematische Zeitschrift} 28(1):697--707.

\bibitem[{{Uber}(2018)}]{uber_express}
{Uber}, 2018 \emph{Introducing {Express POOL}: {Walk} a little to save a lot}. \url{https://www.uber.com/newsroom/expresspool/}, accessed: 2023-06-29.

\bibitem[{Vial(1983)}]{vial1983strong}
Vial JP, 1983 \emph{Strong and weak convexity of sets and functions}. \emph{Mathematics of Operations Research} 8(2):231--259.

\bibitem[{Wei, Xu, \protect\BIBand{} Yu(2023)}]{wei2023constant}
Wei Y, Xu J, Yu SH, 2023 \emph{Constant regret primal-dual policy for multi-way dynamic matching}. \emph{Abstract Proceedings of the 2023 ACM SIGMETRICS International Conference on Measurement and Modeling of Computer Systems}, 79--80.

\bibitem[{Yan, Yan, \protect\BIBand{} Shen(2025)}]{yan2023pricing}
Yan C, Yan J, Shen Y, 2025 \emph{Pricing shared rides}. \emph{Operations Research} .

\end{thebibliography}

\begin{ECSwitch}
    \ECHead{\centering{Online Appendix}}
    \renewcommand{\theHsection}{A\arabic{section}}
    \section{Proofs and Auxiliary Results}

Throughout the proof, denote the optimal solution of $c(\boldsymbol{\lambda})$ as $\boldsymbol{x}^*=(x_{i,j}^*)_{i,j\in[N]}$ and $\boldsymbol{y}^*=(y_i^*)_{i\in[N]}$.

\subsection{Proof of \Cref{prop:theta_zero}}
When $\theta_i = 0$, $\forall i\in[N]$, constraint \eqref{equ:cn-bound} is trivially satisfied, and we have $y_i^*=0$, $\forall i\in[N]$---otherwise, by selecting a small enough value of $\varepsilon>0$, we can always decrease $y_i^*$ by $2\varepsilon$ and increase $x_{i,i}^*$ by $\varepsilon$, reducing the total cost by $c_{(i)}\varepsilon$ while still satisfying the constraint \eqref{equ:cn-flow}.

Similarly, we can prove that there exists an optimal solution satisfying $x_{i,j}^*=0$, $\forall i \neq j$. Conversely, if the opposite holds, we can decrease $x_{i,j}^*$ by $\varepsilon$, increase $x_{i,i}^*$ and $x_{j,j}^*$ by $\varepsilon/2$, then we can obtain a new feasible solution with the total cost less than or equal to the original total cost, since
$$
c_{(i,j)} \geq \frac{1}{2} c_{(i)} + \frac{1}{2} c_{(j)},\ \forall i,j\in[N].
$$

Therefore, an optimal solution can be given by $x_{i,i}^*=\lambda_i/2$, $\forall i\in[N]$. The optimal cost is given by $\sum_{i\in[N]} c_{(i)}\lambda_i/2$ and is thus linear. \hfill $\square$

\subsection{Proof of \Cref{prop:eos}}

Consider $\boldsymbol{x}^*$ and $\boldsymbol{y}^*$ as an optimal solution of $c(\boldsymbol{\lambda})$. By multiplying constraints \eqref{equ:cn-flow} and \eqref{equ:cn-bound} by $\alpha$, we have
\begin{align*}
\sum_{j\in[N]} (\alpha x_{j,i}^*) + \sum_{j\in[N]} (\alpha x_{i,j}^*) + (\alpha y_i^*) &= (\alpha \lambda_i), & \forall i \in [N],\\
\theta_i (\alpha x_{i,j}^*) &\leq (\alpha \lambda_j) y_i^*, & \forall i,j \in [N],
\end{align*}
since $\alpha > 1$, $(\alpha \lambda_j) y_i^* < (\alpha \lambda_j) (\alpha y_i^*)$, and thus $\alpha \boldsymbol{x}^*$ and $\alpha \boldsymbol{y}^*$ are feasible solutions when $\alpha \boldsymbol{\lambda}$ is the arrival rate. Then we have
$$
\alpha c(\boldsymbol{\lambda}) = \sum_{i\in[N]} \sum_{j\in[N]} c_{(i,j)} \left(\alpha x^*_{i,j}\right) + \sum_{i\in[N]} c_{(i)} \left(\alpha y^*_i \right) \geq c(\alpha\boldsymbol{\lambda}),
$$
and thus $c(\alpha\boldsymbol{\lambda})/\| \alpha\boldsymbol{\lambda} \| \leq  c(\boldsymbol{\lambda})/\| \boldsymbol{\lambda} \|$.
\hfill $\square$

\subsection{Proofs of \Cref{lemma:n-weak-concave} and \Cref{prop:n-weak-concave-nonzero}}
\label{proof:n-weak-concave}

The proof relies on the following three lemmas. The first lemma below establishes the connection between local weak concavity and global weak concavity. For ease of presentation, let $B(\hat{\boldsymbol{\lambda}}, \varepsilon) = \{ \boldsymbol{\lambda}\ |\ \|\boldsymbol{\lambda} - \hat{\boldsymbol{\lambda}}\| < \varepsilon \}$ denote the open ball centered at $\hat{\boldsymbol{\lambda}}$. 

\begin{lemma} \label{lemma:n-weak-concave-1}
    If for any $\hat{\boldsymbol{\lambda}}\in\Lambda$, there exists $\varepsilon_{\hat{\boldsymbol{\lambda}}} > 0$ such that $c(\boldsymbol{\lambda})$ is weakly concave on $B(\hat{\boldsymbol{\lambda}}, \varepsilon_{\hat{\boldsymbol{\lambda}}}) \cap \Lambda$, then $c(\boldsymbol{\lambda})$ is weakly concave on $\Lambda$.
\end{lemma}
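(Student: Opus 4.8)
\textbf{Proof plan for Lemma \ref{lemma:n-weak-concave-1}.} The plan is to upgrade the \emph{pointwise} local weak concavity to a single uniform modulus $\rho$ by a compactness argument, and then to invoke the elementary (but domain-dependent) fact that on a convex set a continuous function which is concave in a neighbourhood of every point is globally concave. Concretely, it suffices to produce one $\rho\ge 0$ for which $h(\boldsymbol{\lambda}):=c(\boldsymbol{\lambda})-\rho\|\boldsymbol{\lambda}\|^2/2$ is concave on $\Lambda$.

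\emph{Step 1 (uniform modulus via compactness).} By hypothesis, to each $\hat{\boldsymbol{\lambda}}\in\Lambda$ we may attach $\varepsilon_{\hat{\boldsymbol{\lambda}}}>0$ and $\rho_{\hat{\boldsymbol{\lambda}}}\ge 0$ such that $c(\boldsymbol{\lambda})-\rho_{\hat{\boldsymbol{\lambda}}}\|\boldsymbol{\lambda}\|^2/2$ is concave on the convex set $B(\hat{\boldsymbol{\lambda}},\varepsilon_{\hat{\boldsymbol{\lambda}}})\cap\Lambda$. The open balls of radius $\varepsilon_{\hat{\boldsymbol{\lambda}}}$ centred at the $\hat{\boldsymbol{\lambda}}$'s cover the compact set $\Lambda$, so finitely many of them, say centred at $\hat{\boldsymbol{\lambda}}^{(1)},\dots,\hat{\boldsymbol{\lambda}}^{(m)}$ with radii $\varepsilon_1,\dots,\varepsilon_m$, already cover $\Lambda$. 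Set $\rho:=\max_{k}\rho_{\hat{\boldsymbol{\lambda}}^{(k)}}$. On each $B(\hat{\boldsymbol{\lambda}}^{(k)},\varepsilon_k)\cap\Lambda$ we may write $h=\bigl(c-\rho_{\hat{\boldsymbol{\lambda}}^{(k)}}\|\cdot\|^2/2\bigr)-\tfrac12(\rho-\rho_{\hat{\boldsymbol{\lambda}}^{(k)}})\|\cdot\|^2$, a sum of a concave function and the concave function $-\tfrac12(\rho-\rho_{\hat{\boldsymbol{\lambda}}^{(k)}})\|\cdot\|^2$ (the coefficient being $\ge 0$); hence $h$ is concave there. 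Thus $h$ is concave on a relatively open neighbourhood in $\Lambda$ of every point of $\Lambda$, and $h$ is continuous on $\Lambda$ because $c$ is (the LP \eqref{equ:cn} is feasible, e.g. via $\boldsymbol{x}=\boldsymbol{0},\boldsymbol{y}=\boldsymbol{\lambda}$, with a compact feasible region and finite optimal value $0\le c(\boldsymbol{\lambda})\le\sum_{i\in[N]}c_{(i)}\lambda_i$ for every $\boldsymbol{\lambda}\in\Lambda$).

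\emph{Step 2 (local concavity $\Rightarrow$ global concavity on a convex set).} Fix $\boldsymbol{a},\boldsymbol{b}\in\Lambda$ and let $\phi(t):=h\bigl((1-t)\boldsymbol{a}+t\boldsymbol{b}\bigr)$, $t\in[0,1]$. Since the restriction of a concave function to a line segment is concave, $\phi$ is continuous and concave in a neighbourhood of every $t\in[0,1]$, and it suffices to show $\phi$ is concave on $[0,1]$. Put $\psi(t):=\phi(t)-(1-t)\phi(0)-t\phi(1)$; then $\psi$ is continuous, locally concave, and $\psi(0)=\psi(1)=0$, and we must prove $\psi\ge 0$ on $[0,1]$. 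Suppose $\mu:=-\min_{[0,1]}\psi>0$; the minimiser set $S:=\{t\in[0,1]:\psi(t)=-\mu\}$ is nonempty, closed, and contained in $(0,1)$. For $t_0\in S$, $\psi$ is concave on some $(t_0-\delta,t_0+\delta)$ and attains there an interior minimum at $t_0$, so for $0<\delta'<\delta$ the chain $\psi(t_0)\ge\tfrac12\psi(t_0-\delta')+\tfrac12\psi(t_0+\delta')\ge\psi(t_0)$ forces $\psi(t_0\pm\delta')=\psi(t_0)$; hence $\psi$ is constant near $t_0$ and $S$ is open. Being a nonempty clopen subset of the connected set $[0,1]$, $S=[0,1]$, contradicting $\psi(0)=0\neq-\mu$. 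Therefore $\psi\ge 0$, i.e. $\phi$ lies above all of its chords; as $\boldsymbol{a},\boldsymbol{b}$ were arbitrary, $h$ is concave on $\Lambda$, and $c(\boldsymbol{\lambda})=h(\boldsymbol{\lambda})+\rho\|\boldsymbol{\lambda}\|^2/2$ is weakly concave on $\Lambda$.

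\emph{Main obstacle.} The compactness reduction in Step 1 is routine; the heart of the argument is Step 2, the "local concave $\Rightarrow$ global concave'' passage, where convexity of $\Lambda$ and connectedness of segments are indispensable (the implication fails on non-convex domains). A minor but genuine technical point is continuity of $c$ up to $\partial\Lambda$, needed so the concavity inequalities obtained near interior points extend to boundary points; I would handle it via the elementary feasibility/boundedness bounds above (or by citing standard continuity of LP optimal-value functions in this regime).
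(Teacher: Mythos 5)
Your proof is correct, and it takes a genuinely different route from the paper's at both of its two steps. For the uniformization of the modulus, the paper simply sets $\bar{\rho}:=\sup_{\hat{\boldsymbol{\lambda}}\in\Lambda}\rho(\hat{\boldsymbol{\lambda}})$ and asserts finiteness ``because $\Lambda$ is a closed interval''; your compactness/finite-subcover argument actually justifies this step (a pointwise-assigned $\rho(\hat{\boldsymbol{\lambda}})$ over a compact set need not have a finite supremum without the subcover reduction), so on this point your write-up is more rigorous than the paper's. For the local-to-global passage, the paper invokes the Tietze--Nakajima theorem (a closed, connected, locally convex set is convex) applied to the hypograph of $c(\boldsymbol{\lambda})-\bar{\rho}\|\boldsymbol{\lambda}\|^2/2$, whereas you restrict to line segments and run an elementary clopen/connectedness argument on $\psi$; your version is self-contained and avoids the external citation, at the cost of a slightly longer argument. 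The one point to tighten is continuity of $c$ up to $\partial\Lambda$ (needed so $S=\psi^{-1}(-\mu)$ is closed and the minimum is attained): your feasibility and boundedness observations alone do not yield continuity, and you should, as you indicate, cite the standard continuity of the LP value function (the paper uses the maximum theorem for exactly this). With that citation in place, the argument is complete.
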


\begin{proof} {Proof.}
    By the definition of weak concavity, for every $\hat{\boldsymbol{\lambda}} \in \Lambda$, there exists $\varepsilon_{\hat{\boldsymbol{\lambda}}} > 0$ and a scalar $\rho_{\hat{\boldsymbol{\lambda}}} \ge 0$ such that the function 
    $c(\boldsymbol{\lambda}) - \rho_{\hat{\boldsymbol{\lambda}}} \| \boldsymbol{\lambda} \|^2 / 2$ 
    is concave on the set $B(\hat{\boldsymbol{\lambda}}, \varepsilon_{\hat{\boldsymbol{\lambda}}}) \cap \Lambda$. 
    The collection of open sets $\mathcal{U} = \{ B(\hat{\boldsymbol{\lambda}}, \varepsilon_{\hat{\boldsymbol{\lambda}}}) \}_{\hat{\boldsymbol{\lambda}} \in \Lambda}$ forms an open cover of $\Lambda$. Since $\Lambda$ is a compact set, there exists a finite subcover $\{ B(\boldsymbol{\lambda}_k, \varepsilon_{\boldsymbol{\lambda}_k}) \}_{k=1}^K$ that covers $\Lambda$. We define $\bar{\rho}$ as the maximum of the finite set of local parameters:
    $$
    \bar{\rho} := \max \{ \rho_{\boldsymbol{\lambda}_1}, \dots, \rho_{\boldsymbol{\lambda}_K} \} < +\infty.
    $$
    For any $k \in \{1, \dots, K\}$, since $\bar{\rho} \geq \rho_{\boldsymbol{\lambda}_k}$, the function $c(\boldsymbol{\lambda}) - {\bar{\rho}} \| \boldsymbol{\lambda} \|^2/2$ remains concave on $B(\boldsymbol{\lambda}_k, \varepsilon_k) \cap \Lambda$. Consequently, the function
    $c(\boldsymbol{\lambda}) - {\bar{\rho}} \| \boldsymbol{\lambda} \|^2/2
    $
    is locally concave everywhere on $\Lambda$.
    
    We now utilize the equivalence between the concavity of a function and the convexity of its hypograph (see e.g., Section 3.1.7 in \citealt{Boyd_Vandenberghe_2004}). The hypograph is defined as:
    $$
    H=\left\{ (\boldsymbol{\lambda}, t)\ \bigg|\ \boldsymbol{\lambda}\in \Lambda, t\leq c(\boldsymbol{\lambda}) - \frac{\bar{\rho} \| \boldsymbol{\lambda}\|^2}{2} \right\}.
    $$
    Since $c(\boldsymbol{\lambda}) - {\bar{\rho}} \| \boldsymbol{\lambda} \|^2/2
    $ is locally concave, the hypograph is a locally convex set. Furthermore, because $c(\boldsymbol{\lambda})$ is continuous (implied by the Maximum Theorem, \citealt{ok2011real}) and $\Lambda$ is a compact convex set, $H$ is closed and connected.
    
    According to the Tietze-Nakajima Theorem, a closed, connected, and locally convex set is convex \citep{tietze1928konvexheit, nakajima1928konvexe, bjorndahl2010revisiting}. This implies that $H$ is a convex set. By the hypograph equivalence, the convexity of $H$ implies that $h(\boldsymbol{\lambda})$ is a concave function on $\Lambda$. 
    
    Since $c(\boldsymbol{\lambda}) - \bar{\rho} \| \boldsymbol{\lambda} \|^2/2$ is concave on $\Lambda$, it follows by definition that $c(\boldsymbol{\lambda})$ is weakly concave on $\Lambda$.
    
    \hfill $\square$
\end{proof}

The second lemma below is straightforward. It gives the boundedness property of a function if it can be expressed as a ratio of two polynomial functions and the denominator is always positive.

\begin{lemma} \label{lemma:n-weak-concave-2}
    Assume $\boldsymbol{\lambda}\in \hat{\Lambda} \subset \mathbb{R}^N$, where $\hat{\Lambda}$ is any compact and convex set. Let $p(\boldsymbol{\lambda})$ and $q(\boldsymbol{\lambda})$ be two (multivariate) polynomial functions of $\boldsymbol{\lambda} \in \hat{\Lambda}$. If $q(\boldsymbol{\lambda}) \neq 0$ always holds, then $p(\boldsymbol{\lambda})/q(\boldsymbol{\lambda})$ and its partial derivatives of any order are bounded on $\hat{\Lambda}$.
\end{lemma}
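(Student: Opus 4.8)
The plan is to reduce the statement to two elementary facts: a continuous function on a compact set is bounded, and a quotient of polynomials whose denominator is bounded away from zero inherits that boundedness, together with a short induction that controls the form of the derivatives. First I would note that $p(\boldsymbol{\lambda})$ and $q(\boldsymbol{\lambda})$, being polynomials, are continuous on $\mathbb{R}^N$ and hence bounded on the compact set $\hat{\Lambda}$; write $|p(\boldsymbol{\lambda})| \le P_{\max}$ for all $\boldsymbol{\lambda} \in \hat{\Lambda}$. Combined with the hypothesis $q(\boldsymbol{\lambda}) \ge q_{\min} > 0$, this immediately gives $|p(\boldsymbol{\lambda})/q(\boldsymbol{\lambda})| \le P_{\max}/q_{\min}$ on $\hat{\Lambda}$, which settles the claim for $p/q$ itself (the zeroth-order derivative).

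For the derivatives I would argue by induction on the differentiation order. The structural claim is that every partial derivative of order $k$ of $p/q$ can be written as $r_k(\boldsymbol{\lambda})/q(\boldsymbol{\lambda})^{m_k}$ for some polynomial $r_k$ and some positive integer $m_k$ (indeed $m_k \le k+1$). The base case holds with $r_0 = p$ and $m_0 = 1$. For the inductive step, applying $\partial/\partial \lambda_i$ to a function of the form $r/q^m$ and using the quotient rule yields
$$
\frac{\partial}{\partial \lambda_i}\!\left(\frac{r}{q^m}\right) = \frac{(\partial r/\partial \lambda_i)\,q - m\,r\,(\partial q/\partial \lambda_i)}{q^{m+1}},
$$
whose numerator is again a polynomial (built from sums and products of polynomials and their partial derivatives) and whose denominator is the power $q^{m+1}$. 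Since any order-$k$ partial derivative is an iterate of $k$ first-order partials, the claimed form is preserved throughout.

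Finally, for each multi-index of order $k$ the numerator $r_k$ is a polynomial, hence continuous, hence bounded on $\hat{\Lambda}$ by some constant $R_{\max}$, while $q(\boldsymbol{\lambda})^{m_k} \ge q_{\min}^{m_k} > 0$ there; therefore $|r_k/q^{m_k}| \le R_{\max}/q_{\min}^{m_k}$ on $\hat{\Lambda}$, giving the desired bound for every partial derivative of every order. I do not expect a genuine obstacle here; the only point that needs a little care is verifying that the quotient rule keeps the numerator a polynomial and the denominator a power of $q$ at every step of the induction, which the display above makes transparent. (Convexity of $\hat{\Lambda}$ is not actually used — only compactness — but the statement is phrased as it will be invoked later.)
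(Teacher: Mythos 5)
Your proposal is correct and follows essentially the same route as the paper's proof: bound $p/q$ via boundedness of polynomials on the compact set together with the lower bound $q \ge q_{\min} > 0$, then use the quotient rule inductively to show every derivative is again a polynomial over a power of $q$ bounded below. Your explicit tracking of the form $r_k/q^{m_k}$ is slightly more careful than the paper's "similarly" at the inductive step, but the argument is the same.
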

\begin{proof}{Proof.}
    Since $p(\boldsymbol{\lambda})$ and $q(\boldsymbol{\lambda})$ are two polynomial functions defined in a compact set $\hat{\Lambda}$, they are both continuous and bounded. By the Extreme Value Theorem, $q(\boldsymbol{\lambda})$ attains minimum and maximum values $q_{\min}, q_{\max}$ on $\hat{\Lambda}$, and either $q_{\min} \leq q_{\max} < 0$ or $0 < q_{\min} \leq q_{\max}$. Therefore, $p(\boldsymbol{\lambda})/q(\boldsymbol{\lambda})$ is bounded. Now consider the first-order partial derivative of $p(\boldsymbol{\lambda})/q(\boldsymbol{\lambda})$:
    $$
    \frac{\partial p(\boldsymbol{\lambda})/q(\boldsymbol{\lambda})}{\partial \lambda_i} = \frac{\frac{\partial p(\boldsymbol{\lambda})}{\partial\lambda_i} q(\boldsymbol{\lambda}) - \frac{\partial q(\boldsymbol{\lambda})}{\partial\lambda_i} p(\boldsymbol{\lambda})}{[q(\boldsymbol{\lambda})]^2}.
    $$
    It can be seen that the numerator and denominator are still polynomial functions, and the denominator still has a positive lower bound of $\min\{q_{\min}^2, q_{\max}^2\}$. Therefore, the aforementioned property still holds, and thus the first-order partial derivative is still bounded. By induction, any-order derivatives exist and are all bounded. \hfill $\square$
\end{proof}

Lastly, the third lemma gives the weakly concave property for a twice differentiable function with a bounded Hessian.

\begin{lemma} \label{lemma:n-weak-concave-3}
Assume $\boldsymbol{\lambda}\in \hat{\Lambda} \subset \mathbb{R}^N$, where $\hat{\Lambda}$ is any compact and convex set. If a function $f(\boldsymbol{\lambda})$ is twice differentiable and each element in its Hessian matrix is bounded for $\boldsymbol{\lambda} \in \hat{\Lambda}$, then $f(\boldsymbol{\lambda})$ is weakly concave on $\hat{\Lambda}$.
\end{lemma}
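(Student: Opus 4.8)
The plan is to exhibit an explicit value of $\rho$ for which $f(\boldsymbol{\lambda}) - \rho\|\boldsymbol{\lambda}\|^2/2$ is concave on $\hat{\Lambda}$, using the standard second-order characterization of concavity on a convex domain. Since $\hat{\Lambda}$ is convex and $f$ is twice differentiable, the function $f(\boldsymbol{\lambda}) - \rho\|\boldsymbol{\lambda}\|^2/2$ is concave on $\hat{\Lambda}$ if and only if its Hessian $\nabla^2 f(\boldsymbol{\lambda}) - \rho I$ is negative semidefinite for every $\boldsymbol{\lambda}\in\hat{\Lambda}$; equivalently, the largest eigenvalue of $\nabla^2 f(\boldsymbol{\lambda})$ is at most $\rho$ for all $\boldsymbol{\lambda}\in\hat{\Lambda}$.

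First I would invoke the boundedness hypothesis: let $M<+\infty$ be a uniform bound on the absolute values of all entries of $\nabla^2 f(\boldsymbol{\lambda})$ over $\boldsymbol{\lambda}\in\hat{\Lambda}$. I would then bound the largest eigenvalue of the symmetric matrix $\nabla^2 f(\boldsymbol{\lambda})$ uniformly in terms of $M$ and $N$ --- for instance via the Gershgorin circle theorem (every eigenvalue lies in a disc centered at some diagonal entry with radius at most $(N-1)M$, hence the largest eigenvalue is at most $NM$), or equivalently via the crude operator-norm estimate $\|\nabla^2 f(\boldsymbol{\lambda})\|_2 \le NM$. Setting $\rho := NM$, which is finite and nonnegative, we obtain $\nabla^2 f(\boldsymbol{\lambda}) - \rho I \preceq 0$ for every $\boldsymbol{\lambda}\in\hat{\Lambda}$, so $f(\boldsymbol{\lambda}) - \rho\|\boldsymbol{\lambda}\|^2/2$ is concave on the convex set $\hat{\Lambda}$, and therefore $f$ is weakly concave on $\hat{\Lambda}$ by definition.

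There is essentially no hard step here --- the statement is a direct consequence of the second-order test for concavity combined with a uniform eigenvalue bound. The only points that warrant a sentence of care are (i) using the convexity of $\hat{\Lambda}$ so that the Hessian criterion for concavity is applicable, and (ii) noting that the constructed $\rho$ is automatically nonnegative since $NM\ge 0$, so it is a valid choice in the definition of weak concavity. This lemma will then be combined with Lemma \ref{lemma:n-weak-concave-1} and Lemma \ref{lemma:n-weak-concave-2} to complete the proof of Lemma \ref{lemma:n-weak-concave}: on a small enough neighborhood of any $\hat{\boldsymbol{\lambda}}$ where $\boldsymbol{y}^*>\boldsymbol{0}$, the value function agrees with a ratio of polynomials in $\boldsymbol{\lambda}$ with positively bounded denominator, whose Hessian entries are bounded by Lemma \ref{lemma:n-weak-concave-2}, hence locally weakly concave by the present lemma, which globalizes via Lemma \ref{lemma:n-weak-concave-1}.
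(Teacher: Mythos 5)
Your proposal is correct and follows essentially the same route as the paper: both arguments shift the Hessian by $\rho I$ with $\rho$ chosen from the entrywise bounds so that every eigenvalue of $\nabla^2 f(\boldsymbol{\lambda}) - \rho I$ is nonpositive (the paper phrases this as strict diagonal dominance with nonpositive diagonal, citing Horn and Johnson, which is the same Gershgorin-type estimate you invoke with $\rho = NM$ versus the paper's $\rho > \max_i \sum_j \bar{h}_{i,j}$). The second-order concavity test on the convex set $\hat{\Lambda}$ then yields weak concavity exactly as in the paper.
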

\begin{proof} {Proof.}
    Let $H(f(\boldsymbol{\lambda}))$ denote the Hessian of $f(\boldsymbol{\lambda})$, and its elements are denoted as $h_{i,j}(\boldsymbol{\lambda})$, $i,j\in[N]$. Since each element is bounded, we let $\bar{h}_{i,j}$ denote the upper bound of the absolute value of $h_{i,j}(\boldsymbol{\lambda})$, that is, we have
    $$
    |h_{i,j}(\boldsymbol{\lambda})| \leq \bar{h}_{i,j}, \forall \boldsymbol{\lambda}\in \hat{\Lambda},\ i,j\in[N].
    $$
    
    Given that $f(\boldsymbol{\lambda})$ is twice differentiable, so is $f(\boldsymbol{\lambda}) - \rho \|\boldsymbol{\lambda}\|^2$ for any $\rho \geq 0$. Then the Hessian of $f(\boldsymbol{\lambda}) - \rho \|\boldsymbol{\lambda}\|^2/2$ can be expressed as
    $
    H(f(\boldsymbol{\lambda})) - \rho \boldsymbol{I}
    $, where $\boldsymbol{I}$ is the identity matrix. The diagonal elements of this Hessian matrix are $h_{i,i}(\boldsymbol{\lambda})-\rho,\ i\in[N]$, and the non-diagonal elements are still $h_{i,j}(\boldsymbol{\lambda}),\ i,j\in[N]$. We select a value of $\rho$ that satisfies
    $$
    \rho > \max_{i\in[N]} \sum_{j\in[N]} \bar{h}_{i,j},
    $$
    then $h_{i,i}(\boldsymbol{\lambda})-\rho \leq \bar{h}_{i,i} - \rho \leq 0,\ i\in[N]$, and
    \begin{align*}
    |h_{i,i}(\boldsymbol{\lambda})-\rho| - \sum_{j\neq i} |h_{i,j}(\boldsymbol{\lambda})| &=  \rho -  h_{i,i}(\boldsymbol{\lambda})-   \sum_{j\neq i} |h_{i,j}(\boldsymbol{\lambda})| \\
    & \geq \rho -  \sum_{j\in [N]} |h_{i,j}(\boldsymbol{\lambda})| \\
    & \geq \rho -  \sum_{j\in [N]} \bar{h}_{i,j} \\
    & > 0,\ \forall i\in[N].
    \end{align*}
    This means that the matrix $
    H(f(\boldsymbol{\lambda})) - \rho \boldsymbol{I}
    $ is strictly diagonally dominant and its diagonal elements are all non-positive. The matrix is thus negative semidefinite (see, e.g., Theorem 6.1.10 in \citealt{horn2012matrix}).

    For the twice differentiable function $f(\boldsymbol{\lambda}) - \rho \|\boldsymbol{\lambda}\|^2/2$, since its Hessian $
    H(f(\boldsymbol{\lambda})) - \rho \boldsymbol{I}
    $ is negative semidefinite, it is concave. Thus $f(\boldsymbol{\lambda})$ is weakly concave on $\hat{\Lambda}$. \hfill $\square$
\end{proof}

Now we are ready to give the proof of \Cref{lemma:n-weak-concave}.

\begin{proof} {Proof of \Cref{lemma:n-weak-concave}.} We first write program \eqref{equ:cn} as the standard form consisting of only equality constraints and nonnegative decision variables:
\begin{subequations} \label{equ:cn-standard-form}
\begin{align} 
c(\boldsymbol{\lambda}) = \min_{\boldsymbol{x},\boldsymbol{y}}\quad & \sum_{i\in[N]} \sum_{j\in[N]} c_{(i,j)} x_{i,j} + \sum_{i\in[N]} c_{(i)} y_i, \\
\text{s.t.} \quad & \sum_{j\in[N]} x_{j,i} + \sum_{j\in[N]} x_{i,j} + y_i = \lambda_i, & \forall i \in [N], \\
& \theta_i x_{i,j} +\delta_{i,j}= \lambda_j y_i, & \forall i,j \in [N], \label{equ:cn-bound-standard-form}\\
& x_{i,j} \geq 0, & \forall i,j \in [N], \\
& \delta_{i,j} \geq 0, & \forall i,j \in [N], \label{equ:cn-nonneg-delta}\\
& y_{i} \geq 0, & \forall i \in [N].
\end{align}
\end{subequations}
For ease of expression, let
$
    \boldsymbol{b}(\boldsymbol{\lambda}) = (\boldsymbol{\lambda}, \boldsymbol{0}_{N^2})^T
$ denote the right-hand-side coefficients.

Given a value of $\hat{\boldsymbol{\lambda}}\in\Lambda$, assume there are a total $K\geq1$ optimal BFSs to program \eqref{equ:cn} when $\boldsymbol{\lambda}=\hat{\boldsymbol{\lambda}}$. For the $k$-th BFS, let $\mathcal{B}_k$ be the set of indices for its basis, and $\boldsymbol{B}^{(k)}(\boldsymbol{\lambda})$ be the basis matrix. The BFS can then be expressed as
\begin{equation}
    \mathbf{z}_{\mathcal{B}_k}(\hat{\boldsymbol{\lambda}}) = {\boldsymbol{B}^{(k)}(\hat{\boldsymbol{\lambda}})}^{-1} \boldsymbol{b}(\hat{\boldsymbol{\lambda}}) = \frac{\text{adj}(\boldsymbol{B}^{(k)}(\hat{\boldsymbol{\lambda}}))}{\text{det}(\boldsymbol{B}^{(k)}(\hat{\boldsymbol{\lambda}}))}\hat{\boldsymbol{\lambda}},\ \forall k\in[K], \label{equ:bfs}
\end{equation}
and we denote the value function produced by this BFS as $c_{\mathcal{B}_k}(\hat{\boldsymbol{\lambda}})$.
According to our assumption, all optimal BFSs are nondegenerate. Therefore, $ \mathbf{z}_{\mathcal{B}_k}(\hat{\boldsymbol{\lambda}}) > \boldsymbol{0} $ and $\text{det}(\boldsymbol{B}^{(k)}(\hat{\boldsymbol{\lambda}})) \neq 0$ for all $k \in [K]$. 

Now consider a small neighborhood of $\hat{\boldsymbol{\lambda}}$, defined as $ B(\hat{\boldsymbol{\lambda}}, \varepsilon) \cap \Lambda$, where $\varepsilon>0$ is a sufficiently small value. For every optimal BFS at $\hat{\boldsymbol{\lambda}}$, we can use it to produce another solution at any $ \boldsymbol{\lambda} \in B(\hat{\boldsymbol{\lambda}}, \varepsilon) \cap \Lambda$, by fixing the same non-basic variables to zero. Specifically, for the $k$-th optimal BFS at $\hat{\boldsymbol{\lambda}}$, by substituting $\hat{\boldsymbol{\lambda}}$ with $ \boldsymbol{\lambda}$ in \eqref{equ:bfs}, we obtain the following solution at $ \boldsymbol{\lambda} \in B(\hat{\boldsymbol{\lambda}}, \varepsilon) \cap \Lambda$:
\begin{align*}
    \mathbf{z}_{\mathcal{B}_k}(\boldsymbol{\lambda}) = {\boldsymbol{B}^{(k)}(\boldsymbol{\lambda})}^{-1} \boldsymbol{b}(\boldsymbol{\lambda}) = \frac{\text{adj}(\boldsymbol{B}^{(k)}(\boldsymbol{\lambda}))}{\text{det}(\boldsymbol{B}^{(k)}(\boldsymbol{\lambda}))}\boldsymbol{\lambda},\ \forall k\in[K],
\end{align*}
and value function $c_{\mathcal{B}_k}(\boldsymbol{\lambda})$ is similarly obtained by substituting $\hat{\boldsymbol{\lambda}}$ with $\boldsymbol{\lambda}$.

For any $k\in[K]$, since $ \mathbf{z}_{\mathcal{B}_k}(\hat{\boldsymbol{\lambda}}) > \boldsymbol{0} $, by selecting a small enough value of $\varepsilon$, we can guarantee that $\mathbf{z}_{\mathcal{B}_k}(\boldsymbol{\lambda}) > \boldsymbol{0}$ still holds for any $ \boldsymbol{\lambda} \in B(\hat{\boldsymbol{\lambda}}, \varepsilon) \cap \Lambda$. Therefore, these $K$ solutions are still BFS at $ \boldsymbol{\lambda} \in B(\hat{\boldsymbol{\lambda}}, \varepsilon) \cap \Lambda$. Furthermore, because the total number of bases is finite, there is a strict positive gap between the optimal value and the values of any non-optimal BFS at $\hat{\boldsymbol{\lambda}}$. By continuity, this gap prevents any other feasible basis from becoming optimal within the neighborhood. Similarly, by selecting a small enough value of $\varepsilon$, we can also prevent any other non-feasible basis from becoming feasible within the neighborhood. That is, we have the following results:
\begin{align*}\forall\hat{\boldsymbol{\lambda}}\in\Lambda,\ \exists\varepsilon >0, \text{ s.t. } c(\boldsymbol{\lambda}) = \min_{k\in[K]}\{ c_{\mathcal{B}_k}(\boldsymbol{\lambda})\},\ \forall \boldsymbol{\lambda} \in B(\hat{\boldsymbol{\lambda}}, \varepsilon) \cap \Lambda.
\end{align*}

On the other hand, note that all elements in $\boldsymbol{B}^{(k)}(\boldsymbol{\lambda})$ are polynomial in $\boldsymbol{\lambda}$, and the same is also true for $\text{adj}(\boldsymbol{B}^{(k)}(\boldsymbol{\lambda}))$ and $\text{det}(\boldsymbol{B}^{(k)}(\boldsymbol{\lambda}))$. Therefore, each element in $\mathbf{z}_{\mathcal{B}_k}(\boldsymbol{\lambda})$ can be expressed as a ratio of two polynomial functions of $\boldsymbol{\lambda}$, where the denominator is $\text{det}(\boldsymbol{B}^{(k)}(\boldsymbol{\lambda}))$. Since the basis matrix is invertible at $\hat{\boldsymbol{\lambda}}$, $\text{det}(\boldsymbol{B}^{(k)}(\boldsymbol{\lambda}))$ is non-zero. By continuity, it is still non-zero on $B(\hat{\boldsymbol{\lambda}}, \varepsilon) \cap \Lambda$ by choosing a sufficiently small $\varepsilon$. Therefore, based on \Cref{lemma:n-weak-concave-2}, each element in $\mathbf{z}_{\mathcal{B}_k}(\boldsymbol{\lambda})$ has infinitely many derivatives and is bounded as $\boldsymbol{\lambda}$ changes. The same results of differentiability and boundedness also hold for $c_{\mathcal{B}_k}(\boldsymbol{\lambda})$.

Denote $\boldsymbol{H}(c_{\mathcal{B}_k}(\boldsymbol{\lambda}))$ as the Hessian matrix of the value function $c_{\mathcal{B}_k}(\boldsymbol{\lambda})$. Based on \Cref{lemma:n-weak-concave-2}, every element of $\boldsymbol{H}(c_{\mathcal{B}_k}(\boldsymbol{\lambda}))$ is still bounded as $\boldsymbol{\lambda}$ changes. Then, based on \Cref{lemma:n-weak-concave-3}, $c_{\mathcal{B}_k}(\boldsymbol{\lambda})$ is weakly concave: there exists a positive scalar $\rho_k$ such that $c_{\mathcal{B}_k}(\boldsymbol{\lambda}) - \rho_k \|\boldsymbol{\lambda}\|^2/2$ is concave on $B(\hat{\boldsymbol{\lambda}}, \varepsilon) \cap \Lambda$. Let
$
\bar{\rho}=\max_{k\in[K]}\rho_k
$, then $c_{\mathcal{B}_k}(\boldsymbol{\lambda}) - \bar{\rho} \|\boldsymbol{\lambda}\|^2/2$ is still concave since $\bar{\rho} \ge \rho_k$. Given that $c(\boldsymbol{\lambda}) = \min_{k\in[K]}\{ c_{\mathcal{B}_k}(\boldsymbol{\lambda})\}$, we know that $c(\boldsymbol{\lambda}) - \bar{\rho} \|\boldsymbol{\lambda}\|^2/2$ is concave because the minimization preserves concavity (see Section 3.2.3, \citealt{Boyd_Vandenberghe_2004}), and thus $c(\boldsymbol{\lambda})$ is (locally) weakly concave for $\boldsymbol{\lambda} \in B(\hat{\boldsymbol{\lambda}}, \varepsilon) \cap \Lambda$. Lastly, using \Cref{lemma:n-weak-concave-1}, we can prove that $c(\boldsymbol{\lambda})$ is weakly concave on $\Lambda$. \hfill $\square$

\end{proof}

\begin{proof}{Proof of \Cref{prop:n-weak-concave-nonzero}}
For the standard form \eqref{equ:cn-standard-form}, it involves $2N^2+N$ variables and $N^2+N$ equality constraints. It can be verified that the equality constraints are linearly independent as we assumed that $\boldsymbol{\lambda} \geq \underline{\boldsymbol{\lambda}} > \boldsymbol{0}$: constraint \eqref{equ:cn-flow} spans $N$ dimensions because of the unique occurrence of $y_i$, $\forall i\in[N]$, and \eqref{equ:cn-nonneg-delta} spans additional $N^2$ dimensions because of $\delta_{i,j}$, $\forall i,j\in[N]$.

Based on the definition of BFS and the linear independence of constraints, there must be at least $(2N^2+N) - (N^2 + N) = N^2$ variables equal to zero in each BFS \citep{bertsimas1997introduction}. On the other hand, as we assumed, $y_i>0, \forall i\in[N]$, and for any $i,j\in[N]$, $x_{i,j}$ and $\delta_{i,j}$ cannot be zero at the same time (otherwise constraint \eqref{equ:cn-bound-standard-form} is violated), meaning that there are at most $N^2$ zero variables for each BFS. Therefore, the BFS has exactly $N^2$ zero variables and is thus nondegenerate. 

\hfill $\square$
\end{proof}

\subsection{Proofs of \Cref{lemma:n1-solution} and \Cref{prop:n1-concave}}

\begin{proof} {Proof of \Cref{lemma:n1-solution}}
When $N=1$, constraint \eqref{equ:cn-flow} can be written as $2x_{1,1} +y_1=\lambda_1$, and constraint \eqref{equ:cn-bound} can be written as $\theta x_{1,1}\leq \lambda_1y_1$. We first state that constraint \eqref{equ:cn-bound} should be binding. Otherwise, if the optimal solution satisfies $\theta x_{1,1}^* < \lambda_1y_1^*$, we can obtain a new feasible solution by letting
$
  \hat{x}_{1,1} = x_{1,1}^*+\varepsilon,\ \hat{y}_{1} = y_{1}^*-2\varepsilon.
$
By selecting a small enough value of $\varepsilon>0$, $\theta \hat{x}_{1,1} < \lambda_1\hat{y}_{1}$ is still satisfied. Constraint \eqref{equ:cn-flow} also holds.
Under this new feasible solution, the value function is decreased by $c_{(1)}\varepsilon$, which is a contradiction. 

Therefore, constraint \eqref{equ:cn-bound} is indeed binding. Solving $x_{1,1}$ and $y_{1}$ subject to two equations \eqref{equ:cn-flow} and \eqref{equ:cn-bound} leads to $y_1^*=\lambda_1\theta /(\theta +2\lambda_1)$ and $x_{1,1}^* = \lambda_1^2/(\theta +2\lambda_1)$. $\hfill\square$
\end{proof}

\begin{proof} {Proof of \Cref{prop:n1-concave}}
Given \Cref{lemma:n1-solution},
$$
c(\lambda_1)=c_{(1)}\frac{\lambda_1(\theta +\lambda_1)}{\theta +2\lambda_1}.
$$
Taking the second-order derivative of $c(\lambda_1)$ leads to
$$
\frac{d^2 c(\lambda_1)}{d \lambda_1^2}=-2c_{(1)}\frac{\theta ^2}{(\theta +2\lambda_1)^3} < 0,
$$
thus $c(\lambda_1)$ is concave. \hfill $\square$
\end{proof}

\subsection{Proofs of \Cref{lemma:n2-solution-same-theta} and \Cref{lemma:n2-solution}} \label{proof:n2-solution}
For ease of presentation, we directly consider the general case where $\theta_1$ can be different from $\theta_2$, and we explicitly write all constraints in the case of $N=2$ as follows.
\begin{subequations} \label{equ:c2} 
\begin{align}
c(\boldsymbol{\lambda}) = \min_{\boldsymbol{x},\boldsymbol{y}}\quad & \ c_{(1)}(x_{1,1}+y_1) + c_{(2)}(x_{2,2}+y_2) + c_{(1,2)} (x_{1,2} + x_{2,1}), \label{equ:c2-obj}\\
\text{s.t.} \quad & 2x_{1,1}+x_{1,2}+x_{2,1}+y_1=\lambda_1, \label{equ:c2-flow-1}\\
& 2x_{2,2}+x_{1,2}+x_{2,1}+y_2=\lambda_2, \label{equ:c2-flow-2}\\
& \theta_1 x_{1,1} \leq \lambda_1 y_1, \label{equ:c2-bound-11}\\
& \theta_1 x_{1,2} \leq \lambda_2 y_1, \label{equ:c2-bound-12}\\
& \theta_2 x_{2,1} \leq \lambda_1 y_2, \label{equ:c2-bound-21}\\
& \theta_2 x_{2,2} \leq \lambda_2 y_2, \label{equ:c2-bound-22} \\
& x_{1,1},x_{1,2},x_{2,1},x_{2,2} \geq 0, \label{equ:c2-nonneg-x} \\
& y_1, y_2 \geq 0. \label{equ:c2-nonneg-y} 
\end{align}
\end{subequations}

The proof relies on the following lemmas that partially characterize the optimal solution structure.

\begin{lemma} \label{lemma:n2-one-binding}
     All optimal solutions satisfy $y_1^* = \max\{\theta_1x_{1,1}^*/\lambda_1, \theta_1x_{1,2}^*/\lambda_2\}, y_2^* = \max\{\theta_2x_{2,1}^*/\lambda_1, \theta_2x_{2,2}^*/\lambda_2\}$. That is, at least one of \eqref{equ:c2-bound-11} and \eqref{equ:c2-bound-12} is binding, and at least one of \eqref{equ:c2-bound-21} and \eqref{equ:c2-bound-22} is binding.
\end{lemma}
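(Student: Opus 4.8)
The plan is to prove Lemma~\ref{lemma:n2-one-binding} by a local exchange (perturbation) argument, in exactly the same spirit as the $N=1$ case treated in Lemma~\ref{lemma:n1-solution}. Since $\lambda_1,\lambda_2>0$, feasibility of \eqref{equ:c2-bound-11} and \eqref{equ:c2-bound-12} already yields $y_1^*\geq\max\{\theta_1 x_{1,1}^*/\lambda_1,\ \theta_1 x_{1,2}^*/\lambda_2\}$, so the entire content of the first claim is that this inequality cannot be strict; the second claim is perfectly symmetric. I would therefore argue by contradiction.

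Suppose an optimal solution makes both \eqref{equ:c2-bound-11} and \eqref{equ:c2-bound-12} strictly slack, i.e.\ $\theta_1 x_{1,1}^*<\lambda_1 y_1^*$ and $\theta_1 x_{1,2}^*<\lambda_2 y_1^*$ (in particular $y_1^*>0$). Define a perturbed point by $\hat x_{1,1}=x_{1,1}^*+\varepsilon$, $\hat y_1=y_1^*-2\varepsilon$, with all other variables unchanged, for a small $\varepsilon>0$. The key structural observation is that the self-match variable $x_{1,1}$ enters (with coefficient $2$) only the flow constraint \eqref{equ:c2-flow-1} and the bound \eqref{equ:c2-bound-11}, so this move leaves \eqref{equ:c2-flow-2}, \eqref{equ:c2-bound-21}, \eqref{equ:c2-bound-22} untouched; \eqref{equ:c2-flow-1} still holds exactly because $2\varepsilon-2\varepsilon=0$; \eqref{equ:c2-bound-11} and \eqref{equ:c2-bound-12} continue to hold for all sufficiently small $\varepsilon$ precisely because of the assumed strict slack (each reduces to $\theta_1 x_{1,1}^*+(\theta_1+2\lambda_1)\varepsilon\le\lambda_1 y_1^*$ and $\theta_1 x_{1,2}^*+2\lambda_2\varepsilon\le\lambda_2 y_1^*$ respectively); and \eqref{equ:c2-nonneg-x}--\eqref{equ:c2-nonneg-y} hold for $0<\varepsilon\le y_1^*/2$. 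Thus the perturbed point is feasible, but its objective \eqref{equ:c2-obj} differs from the optimal value by $c_{(1)}\varepsilon-2c_{(1)}\varepsilon=-c_{(1)}\varepsilon<0$ (using $c_{(1)}>0$), contradicting optimality.

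Hence at least one of \eqref{equ:c2-bound-11}, \eqref{equ:c2-bound-12} is binding, which is exactly $y_1^*=\max\{\theta_1 x_{1,1}^*/\lambda_1,\ \theta_1 x_{1,2}^*/\lambda_2\}$; and in the degenerate case $y_1^*=0$ the identity holds trivially, since $\theta_1>0$ forces $x_{1,1}^*=x_{1,2}^*=0$, so both bounds are automatically binding. The statement for $y_2^*$ follows verbatim from the mirror-image perturbation $\hat x_{2,2}=x_{2,2}^*+\varepsilon$, $\hat y_2=y_2^*-2\varepsilon$, which interacts only with \eqref{equ:c2-flow-2} and \eqref{equ:c2-bound-22}. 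There is no substantial obstacle in this argument: the only point requiring care is the choice of perturbation---one must redirect the ``freed'' mass into a \emph{self}-match $x_{i,i}$ rather than into a cross-match such as $x_{1,2}$, since a cross-match would violate the other flow-balance equation; once that choice is fixed, every feasibility check is a one-line sign inequality.
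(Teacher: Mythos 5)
Your proposal is correct and follows essentially the same argument as the paper: a contradiction via the perturbation $\hat x_{1,1}=x_{1,1}^*+\varepsilon$, $\hat y_1=y_1^*-2\varepsilon$ (and its mirror image for type 2), which preserves feasibility when both bound constraints are strictly slack and strictly reduces the cost by $c_{(1)}\varepsilon$. Your additional remarks---the explicit handling of the degenerate case $y_1^*=0$ and the observation that the freed mass must go into a self-match rather than a cross-match---are accurate refinements of the same proof.
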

\begin{proof} {Proof.}
    Assume by way of contradiction that there exists an optimal solution that satisfies $y_1^* \neq \max\{\theta_1x_{1,1}^*/\lambda_1, \theta_1x_{1,2}^*/\lambda_2\}$. According to \eqref{equ:c2-bound-11} and \eqref{equ:c2-bound-12}, $y_1^* > \theta_1x_{1,1}^*/\lambda_1$ and $y_1^* >  \theta_1x_{1,2}^*/\lambda_2$. In this case, we can increase the value of $x_{1,1}^*$ by $\varepsilon > 0$, decrease the value of $y_1^*$ by $2\varepsilon$, and keep the remaining variables unchanged. This still satisfies constraint \eqref{equ:c2-flow-1} and other constraints as long as $\varepsilon$ is small enough. However, the total cost will be decreased by $c_{(1)}(2\varepsilon - \varepsilon) = c_{(1)} \varepsilon$, which violates the assumption of optimality.

    Similarly, it can be proved that $y_2^* = \max\{\theta_2x_{2,1}^*/\lambda_1, \theta_2x_{2,2}^*/\lambda_2\}$. \hfill $\square$
\end{proof}

\begin{lemma} \label{lemma:n2-cross-binding}
When $\Delta_1(\lambda_1,\lambda_2)\leq0$, there exists an optimal solution satisfying $x_{1,2}^* = x_{2,1}^* = 0$. When $\Delta_1(\lambda_1,\lambda_2)>0$, all optimal solutions satisfy $\theta_1x_{1,2}^*=\lambda_2y_1^*$ and $\theta_2x_{2,1}^*=\lambda_1y_2^*$.
\end{lemma}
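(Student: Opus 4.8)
I would begin by using the flow balance constraints \eqref{equ:c2-flow-1}--\eqref{equ:c2-flow-2} to eliminate $x_{1,1}$ and $x_{2,2}$: writing $m:=x_{1,2}+x_{2,1}$ and substituting $x_{i,i}=(\lambda_i-m-y_i)/2$, the objective \eqref{equ:c2-obj} rewrites as
$$
\frac{c_{(1)}\lambda_1+c_{(2)}\lambda_2}{2}\;+\;\frac{c_{(1)}y_1+c_{(2)}y_2}{2}\;+\;m\Bigl(c_{(1,2)}-\tfrac{c_{(1)}+c_{(2)}}{2}\Bigr),
$$
and any feasible point satisfies $0\le m\le\min\{\lambda_1,\lambda_2\}$ (since $x_{1,2}+x_{2,1}$ is dominated by the right-hand side of each flow equation). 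From \eqref{equ:c2-bound-11} together with $x_{1,1}=(\lambda_1-m-y_1)/2$ one obtains the universal lower bound $y_1\ge \theta_1(\lambda_1-m)/(2\lambda_1+\theta_1)$, and likewise for $y_2$; a short computation then shows that substituting these bounds into the objective yields a linear-in-$m$ lower bound $\psi(m)$ whose slope equals exactly $-\Delta_1(\lambda_1,\lambda_2)$. This is where the quantity $\Delta_1$ enters the picture.

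\textbf{Case $\Delta_1(\lambda_1,\lambda_2)\le 0$.} Here $\psi$ is nondecreasing on $[0,\min\{\lambda_1,\lambda_2\}]$, so every feasible solution has cost at least $\psi(0)$. But $\psi(0)$ is precisely the cost of the explicit cross-match-free point obtained by applying Lemma \ref{lemma:n1-solution} to each type in isolation ($x_{1,2}=x_{2,1}=0$, $y_i=\theta_i\lambda_i/(2\lambda_i+\theta_i)$, $x_{i,i}=\lambda_i^2/(2\lambda_i+\theta_i)$), which is feasible. Hence that point is optimal and satisfies $x_{1,2}^*=x_{2,1}^*=0$.

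\textbf{Case $\Delta_1(\lambda_1,\lambda_2)> 0$.} I would argue by contradiction, supposing that some optimal solution has \eqref{equ:c2-bound-12} slack, i.e.\ $\theta_1x_{1,2}^*<\lambda_2y_1^*$, which forces $y_1^*>0$. By Lemma \ref{lemma:n2-one-binding}, \eqref{equ:c2-bound-11} is then tight, so $x_{1,1}^*>0$. Perturb by increasing $x_{1,2}$ by a small $\varepsilon>0$, decreasing $x_{1,1}$ and $y_1$ so as to keep \eqref{equ:c2-bound-11} tight and \eqref{equ:c2-flow-1} satisfied, and compensating on the type-$2$ side, where Lemma \ref{lemma:n2-one-binding} produces two subcases. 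If \eqref{equ:c2-bound-22} is tight, decrease $x_{2,2}$ and $y_2$ keeping it tight and \eqref{equ:c2-flow-2} satisfied; the cost change evaluates to $-\varepsilon\,\Delta_1(\lambda_1,\lambda_2)<0$, contradicting optimality (and the corner $x_{2,2}^*=0$ is excluded first, since it would force $y_2^*=0$, $x_{2,1}^*=0$, $x_{1,2}^*=\lambda_2$, hence $y_1^*\ge\theta_1$ from \eqref{equ:c2-bound-12}, incompatible with the value of $y_1^*$ pinned down by the tight \eqref{equ:c2-bound-11} and \eqref{equ:c2-flow-1}). If instead \eqref{equ:c2-bound-22} is slack, then \eqref{equ:c2-bound-21} is tight; assuming $x_{2,1}^*>0$, decrease $x_{2,1}$ and $y_2$ keeping \eqref{equ:c2-bound-21} tight and \eqref{equ:c2-flow-2} satisfied, and the cost change becomes a positive multiple of $c_{(1,2)}-c_{(1)}\frac{\lambda_1+\theta_1}{2\lambda_1+\theta_1}-c_{(2)}$, which is negative because $\Delta_1>0$ gives $c_{(1,2)}<c_{(1)}\frac{\lambda_1+\theta_1}{2\lambda_1+\theta_1}+c_{(2)}\frac{\lambda_2+\theta_2}{2\lambda_2+\theta_2}<c_{(1)}\frac{\lambda_1+\theta_1}{2\lambda_1+\theta_1}+c_{(2)}$; again a contradiction, and the residual corner $x_{2,1}^*=0$ is ruled out exactly as above. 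Thus \eqref{equ:c2-bound-12} must be tight, and the argument symmetric under interchanging types $1$ and $2$ (which swaps \eqref{equ:c2-bound-12} with \eqref{equ:c2-bound-21} and leaves $\Delta_1$ invariant) shows \eqref{equ:c2-bound-21} is tight as well.

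\textbf{Main obstacle.} The delicate part is the $\Delta_1>0$ half: each perturbation must be tailored to whichever subset of \eqref{equ:c2-bound-11}, \eqref{equ:c2-bound-21}, \eqref{equ:c2-bound-22} is tight (as dictated by Lemma \ref{lemma:n2-one-binding}) so that feasibility and nonnegativity persist for small $\varepsilon$, and in every branch the first-order cost change has to be reduced to an expression whose sign is controlled by $\Delta_1>0$, with the degenerate corners (a basic variable already at zero) excluded by a separate small argument. The $\Delta_1\le 0$ half, by contrast, is immediate once the objective has been rewritten and the $y_i$-lower bounds are in place.
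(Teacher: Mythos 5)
Your treatment of the case $\Delta_1(\lambda_1,\lambda_2)\le 0$ is correct and takes a genuinely different route from the paper. The paper perturbs an arbitrary optimal solution locally (shift $\varepsilon$ of cross-match mass into self-matches and show the cost changes by $+\varepsilon\Delta_1\le 0$), whereas you derive a global lower bound $\psi(m)$, linear in the total cross-match $m=x_{1,2}+x_{2,1}$ with slope exactly $-\Delta_1$ (the identity $\tfrac{\theta_i+\lambda_i}{\theta_i+2\lambda_i}=\tfrac12+\tfrac{\theta_i}{2(\theta_i+2\lambda_i)}$ confirms this), and exhibit a feasible cross-match-free point attaining $\psi(0)$. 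This is cleaner, gives global rather than exchange-by-exchange optimality, and also explains where $\Delta_1$ comes from; I verified the algebra and it is sound.

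The $\Delta_1>0$ half, however, has a genuine gap. You split the type-$2$ side on whether \eqref{equ:c2-bound-22} is tight, and in the ``tight'' branch your perturbation holds $x_{2,1}$ fixed while decreasing $y_2$. This violates \eqref{equ:c2-bound-21} whenever that constraint is \emph{also} tight with $x_{2,1}^*>0$ --- a configuration (\eqref{equ:c2-bound-12} slack; \eqref{equ:c2-bound-11}, \eqref{equ:c2-bound-21}, \eqref{equ:c2-bound-22} all tight; $x_{2,1}^*,x_{2,2}^*,y_2^*>0$) that cannot be excluded a priori, since it is exactly the activity pattern of case (iii) of Table \ref{tab:n2-solution}. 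Neither of your two subcases yields a feasible perturbation there, so no contradiction is reached. The repair is a third branch that scales $x_{2,1},x_{2,2},y_2$ down together so as to keep \eqref{equ:c2-bound-21} and \eqref{equ:c2-bound-22} both tight: solving \eqref{equ:c2-flow-2} gives $\delta y_2=-\varepsilon\theta_2/(\theta_2+\lambda_1+2\lambda_2)$, $\delta x_{2,1}=\lambda_1\delta y_2/\theta_2$, $\delta x_{2,2}=\lambda_2\delta y_2/\theta_2$, then \eqref{equ:c2-flow-1} with \eqref{equ:c2-bound-11} kept tight determines $\delta y_1,\delta x_{1,1}$, and the first-order cost change collapses to $-\varepsilon\,\tfrac{\theta_2+2\lambda_2}{\theta_2+\lambda_1+2\lambda_2}\,\Delta_1(\lambda_1,\lambda_2)<0$, so the contradiction survives (and in this configuration $x_{2,2}^*,y_2^*>0$ follow from $x_{2,1}^*>0$, so the perturbation is feasible). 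The paper sidesteps this by casing directly on the tightness pattern of \eqref{equ:c2-bound-12} and \eqref{equ:c2-bound-21} and tailoring a perturbation to each. Aside from this missing branch, your corner exclusions and the type-swap symmetry argument for \eqref{equ:c2-bound-21} are correct.
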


\begin{proof} {Proof.}
Consider the first case where $\Delta_1(\lambda_1,\lambda_2)\leq0$. Assume, again by way of contradiction, that either $x_{1,2}^* > 0$ or $x_{2,1}^* > 0$. If $x_{1,2}^* > 0$, let $\hat{x}_{1,2} = x_{1,2}^* - \varepsilon$, $\hat{x}_{2,1}=x_{2,1}^*$, and let $\hat{y}_{i} = y_{i}^* + \varepsilon \theta_i/(\theta_i+2\lambda_i)$ and $\hat{x}_{i,i} = x_{i,i}^* + \varepsilon \lambda_i/(\theta_i+2\lambda_i)$ for $i=1,2$. Since $\hat{y}_{i} + 2\hat{x}_{i,i} + \hat{x}_{1,2} + \hat{x}_{2,1} = y_i^*+2x_{i,i}^* + x_{1,2}^* + x_{2,1}^* = \lambda_i$ ($i=1,2$), \eqref{equ:c2-flow-1} and \eqref{equ:c2-flow-2} are still satisfied. \eqref{equ:c2-bound-11} and \eqref{equ:c2-bound-22} are also satisfied because for $i=1,2$, $\theta_i \hat{x}_{i,i} = \theta_i x_{i,i}^* + \theta_i \varepsilon \lambda_i/(\theta_i+2\lambda_i) \leq \lambda_i y_{i}^* + \lambda_i\theta_i \varepsilon /(\theta_i+2\lambda_i) = \lambda_i\hat{y}_{i}$. \eqref{equ:c2-bound-12} and \eqref{equ:c2-bound-21} also hold because $\hat{y}_{1}, \hat{y}_{2}$ are larger than $y_{1}^*$ and $y_{2}^*$, whereas $\hat{x}_{1,2}, \hat{x}_{2,1}$ are smaller than $x_{1,2}^*$ and $x_{2,1}^*$. Lastly, the non-negative requirements \eqref{equ:c2-nonneg-x} and \eqref{equ:c2-nonneg-y} are satisfied by choosing a small value of $\varepsilon < x_{1,2}^*$. Therefore, the new solution $\boldsymbol{\hat{y}}, \boldsymbol{\hat{x}}$ is feasible.

Under this feasible solution, the total cost is given by
\begin{align*}
    & c_{(1)}(\hat{x}_{1,1}+\hat{y}_1) + c_{(2)}(\hat{x}_{2,2}+\hat{y}_2) + c_{(1,2)} (\hat{x}_{1,2} + \hat{x}_{2,1}) \\
    = & c_{(1)}(x_{1,1}^*+y_1^*) + c_{(2)}(x_{2,2}^*+y_2^*) + c_{(1,2)} (x_{1,2}^* + x_{2,1}^*) + \varepsilon\left[ c_{(1)}\frac{\theta_1+\lambda_1}{\theta_1+2\lambda_1} + c_{(2)}\frac{\theta_2+\lambda_2}{\theta_2+2\lambda_2} - c_{(1,2)}\right] \\
    = & c_{(1)}(x_{1,1}^*+y_1^*) + c_{(2)}(x_{2,2}^*+y_2^*) + c_{(1,2)} (x_{1,2}^* + x_{2,1}^*) + \varepsilon\Delta_1(\lambda_1,\lambda_2) \\
    \stackrel{(a)}{\leq} & c_{(1)}(x_{1,1}^*+y_1^*) + c_{(2)}(x_{2,2}^*+y_2^*) + c_{(1,2)} (x_{1,2}^* + x_{2,1}^*),
\end{align*}
where (a) is because of the assumption that $\Delta_1(\lambda_1,\lambda_2) \leq0$. Therefore, if $x_{1,2}^*>0$, by decreasing the value of $x_{1,2}$, we can always obtain another feasible solution without increasing the value function, meaning that $x_{1,2}^*=0$ is an optimal solution. Similarly, $x_{2,1}^*=0$.

Now consider the second case where $\Delta_1(\lambda_1,\lambda_2)>0$. In this case, we show that both \eqref{equ:c2-bound-12} and \eqref{equ:c2-bound-21} are binding at optimality. Again assume the opposite is true and consider the following three sub-cases:

\smallskip

(i) If both \eqref{equ:c2-bound-12} and \eqref{equ:c2-bound-21} are not binding, that is, $\theta_1 x_{1,2}^* < \lambda_2 y_1^*$ and $\theta_2 x_{2,1}^* < \lambda_1 y_2^*$, we then have $y_1^*>0$ and $y_2^* > 0$. Also according to \Cref{lemma:n2-one-binding}, \eqref{equ:c2-bound-11} and \eqref{equ:c2-bound-22} are binding, thus $x_{1,1}^*, x_{2,2}^*>0$. We can let $\hat{x}_{1,2} = x_{1,2}^* + \varepsilon$, $\hat{x}_{2,1}=x_{2,1}^*$, and let $\hat{y}_{i} = y_{i}^* - \varepsilon \theta_i/(\theta_i+2\lambda_i)$ and $\hat{x}_{i,i} = x_{i,i}^* - \varepsilon \lambda_i/(\theta_i+2\lambda_i)$ for $i=1,2$, where $\varepsilon > 0$. Similar to the first case, one can verify that all constraints are satisfied under this new solution when $\varepsilon$ is sufficiently small, and the new cost is given by
\begin{align*}
    & c_{(1)}(\hat{x}_{1,1}+\hat{y}_1) + c_{(2)}(\hat{x}_{2,2}+\hat{y}_2) + c_{(1,2)} (\hat{x}_{1,2} + \hat{x}_{2,1}) \\
    = & c_{(1)}(x_{1,1}^*+y_1^*) + c_{(2)}(x_{2,2}^*+y_2^*) + c_{(1,2)} (x_{1,2}^* + x_{2,1}^*) - \varepsilon\Delta_1(\lambda_1,\lambda_2)  \\
    \stackrel{(a)}{<} & c_{(1)}(x_{1,1}^*+y_1^*) + c_{(2)}(x_{2,2}^*+y_2^*) + c_{(1,2)} (x_{1,2}^* + x_{2,1}^*),
\end{align*}
where (a) is due to $\Delta_1(\lambda_1,\lambda_2) >0$.

\smallskip

(ii) If only \eqref{equ:c2-bound-21} is not binding, that is, $\theta_1 x_{1,2}^* = \lambda_2 y_1^*$ and $\theta_2 x_{2,1}^* < \lambda_1 y_2^*$, then we have $y_2^*>0$. Also according to \Cref{lemma:n2-one-binding}, $\theta_2x_{2,2}^*=\lambda_2y_2^*$. Here we must have $y_1^*>0$, otherwise if $y_1^*=0$, according to \eqref{equ:c2-bound-11}, \eqref{equ:c2-bound-12}, and \eqref{equ:c2-flow-1}, $x_{1,1}^*=x_{1,2}^*=0$, and $x_{2,1}^*=\lambda_1$. Thus $y_2^*>\theta_2x_{2,1}^*/\lambda_1=\theta_2$, and $x_{2,2}^*=\lambda_2y_2^*/\theta_2>\lambda_2$, which violates \eqref{equ:c2-flow-2}. Therefore, $y_1^*, x_{1,2}^*>0$.

We can then construct new feasible solutions with lower costs depending on whether $x_{1,1}^*$ is 0. If $x_{1,1}^*=0$, then $\theta_1x_{1,1}^*<\lambda_1y_1^*$. We construct a new feasible solution as follows: Given $\varepsilon>0$, let $\hat{x}_{2,1} = x_{2,1}^* + \varepsilon$, $\hat{y}_{1} = y_{1}^* - \varepsilon \theta_1/(\theta_1+\lambda_2)$, $\hat{x}_{1,1} = x_{1,1}^*$, $\hat{x}_{1,2} = x_{1,2}^* - \varepsilon \lambda_2/(\theta_1+\lambda_2)$, $\hat{y}_2 = y_2^* - \varepsilon \theta_1\theta_2/\left[(\theta_1+\lambda_2)(\theta_2+2\lambda_2)\right]$, and $\hat{x}_{2,2} = x_{2,2}^* - \varepsilon \theta_1\lambda_2/\left[(\theta_1+\lambda_2)(\theta_2+2\lambda_2)\right]$. Under the new solution $\boldsymbol{\hat{y}}, \boldsymbol{\hat{x}}$, \eqref{equ:c2-flow-1} and \eqref{equ:c2-flow-2} are satisfied because:
\begin{align*}
&2\hat{x}_{1,1}+\hat{x}_{1,2}+\hat{x}_{2,1}+\hat{y}_1 \\= &2x_{1,1}^*+x_{1,2}^*+x_{2,1}^*+y_1^* - \frac{\lambda_2}{\theta_1+\lambda_2}\varepsilon + \varepsilon - \frac{\theta_1}{\theta_1+\lambda_2}\varepsilon\\ =& \lambda_1, \\[3mm]
& 2\hat{x}_{2,2}+\hat{x}_{1,2}+\hat{x}_{2,1}+\hat{y}_2 \\ = & 2x_{2,2}^*+x_{1,2}^*+x_{2,1}^*+y_2^* -2\frac{\theta_1\lambda_2}{(\theta_1+\lambda_2)(\theta_2+2\lambda_2)}\varepsilon - \frac{\lambda_2}{\theta_1+\lambda_2}\varepsilon + \varepsilon - \frac{\theta_1\theta_2}{(\theta_1+\lambda_2)(\theta_2+2\lambda_2)}\varepsilon \\ =& \lambda_2,
\end{align*}
and the remaining constraints are also satisfied as long as $\varepsilon$ is sufficiently small. The new total cost is
\begin{align*}
    & c_{(1)}(\hat{x}_{1,1}+\hat{y}_1) + c_{(2)}(\hat{x}_{2,2}+\hat{y}_2) + c_{(1,2)} (\hat{x}_{1,2} + \hat{x}_{2,1}) \\
    = & c_{(1)}(x_{1,1}^*+y_1^*) + c_{(2)}(x_{2,2}^*+y_2^*) + c_{(1,2)} (x_{1,2}^* + x_{2,1}^*) \\ &- \varepsilon\left[ c_{(1)}\frac{\theta_1}{\theta_1+\lambda_2} + c_{(2)}\frac{\theta_1(\theta_2+\lambda_2)}{(\theta_1+\lambda_2)(\theta_2+2\lambda_2)} - c_{(1,2)}\frac{\theta_1}{\theta_1+\lambda_2}\right] \\
    = & c_{(1)}(x_{1,1}^*+y_1^*) + c_{(2)}(x_{2,2}^*+y_2^*) + c_{(1,2)} (x_{1,2}^* + x_{2,1}^*)  - \varepsilon \frac{\theta_1}{\theta_1+\lambda_2} \left[ c_{(1)}+ c_{(2)}\frac{\theta_2+\lambda_2}{\theta_2+2\lambda_2} - c_{(1,2)}\right] \\
    < & c_{(1)}(x_{1,1}^*+y_1^*) + c_{(2)}(x_{2,2}^*+y_2^*) + c_{(1,2)} (x_{1,2}^* + x_{2,1}^*)  - \varepsilon \frac{\theta_1}{\theta_1+\lambda_2} \left[ c_{(1)}\frac{\theta_1+\lambda_1}{\theta_1+2\lambda_1}+ c_{(2)}\frac{\theta_2+\lambda_2}{\theta_2+2\lambda_2} - c_{(1,2)}\right] \\
    < & c_{(1)}(x_{1,1}^*+y_1^*) + c_{(2)}(x_{2,2}^*+y_2^*) + c_{(1,2)} (x_{1,2}^* + x_{2,1}^*).
\end{align*}

If $x_{1,1}^*>0$, then we construct another new feasible solution as follows: Given $\varepsilon >0$, let $\hat{x}_{2,1} = x_{2,1}^* + \varepsilon$, $\hat{y}_{1} = y_{1}^* - \varepsilon \theta_1/(\theta_1+2\lambda_1+\lambda_2)$, $\hat{x}_{1,1} = x_{1,1}^* - \varepsilon \lambda_1/(\theta_1+2\lambda_1+\lambda_2)$, $\hat{x}_{1,2} = x_{1,2}^* - \varepsilon \lambda_2/(\theta_1+2\lambda_1+\lambda_2)$, $\hat{y}_2 = y_2^* - \varepsilon (\theta_1+2\lambda_1)\theta_2/\left[(\theta_1+2\lambda_1+\lambda_2)(\theta_2+2\lambda_2)\right]$, and $\hat{x}_{2,2} = x_{2,2}^* - \varepsilon (\theta_1+2\lambda_1)\lambda_2/\left[(\theta_1+2\lambda_1+\lambda_2)(\theta_2+2\lambda_2)\right]$. Under the new solution $\boldsymbol{\hat{y}}, \boldsymbol{\hat{x}}$, \eqref{equ:c2-flow-1} and \eqref{equ:c2-flow-2} are satisfied because:
\begin{align*}
&2\hat{x}_{1,1}+\hat{x}_{1,2}+\hat{x}_{2,1}+\hat{y}_1 \\= &2x_{1,1}^*+x_{1,2}^*+x_{2,1}^*+y_1^* -2\frac{\lambda_1}{\theta_1+2\lambda_1+\lambda_2}\varepsilon - \frac{\lambda_2}{\theta_1+2\lambda_1+\lambda_2}\varepsilon + \varepsilon - \frac{\theta_1}{\theta_1+2\lambda_1+\lambda_2}\varepsilon\\ =& \lambda_1, \\[3mm]
& 2\hat{x}_{2,2}+\hat{x}_{1,2}+\hat{x}_{2,1}+\hat{y}_2 \\ = & 2x_{2,2}^*+x_{1,2}^*+x_{2,1}^*+y_2^* -2\frac{(\theta_1+2\lambda_1)\lambda_2}{(\theta_1+2\lambda_1+\lambda_2)(\theta_2+2\lambda_2)}\varepsilon - \frac{\lambda_2}{\theta_1+2\lambda_1+\lambda_2}\varepsilon + \varepsilon - \frac{(\theta_1+2\lambda_1)\theta_2}{(\theta_1+2\lambda_1+\lambda_2)(\theta_2+2\lambda_2)}\varepsilon \\ =& \lambda_2,
\end{align*}
and the remaining constraints are also satisfied as long as $\varepsilon$ is sufficiently small. Under $\boldsymbol{\hat{y}}, \boldsymbol{\hat{x}}$, the total cost is
\begin{align*}
    & c_{(1)}(\hat{x}_{1,1}+\hat{y}_1) + c_{(2)}(\hat{x}_{2,2}+\hat{y}_2) + c_{(1,2)} (\hat{x}_{1,2} + \hat{x}_{2,1}) \\
    = & c_{(1)}(x_{1,1}^*+y_1^*) + c_{(2)}(x_{2,2}^*+y_2^*) + c_{(1,2)} (x_{1,2}^* + x_{2,1}^*) \\ &- \varepsilon\left[ c_{(1)}\frac{\theta_1+\lambda_1}{\theta_1+2\lambda_1+\lambda_2} + c_{(2)}\frac{(\theta_2+\lambda_2)(\theta_1+2\lambda_1)}{(\theta_2+2\lambda_2)(\theta_1+2\lambda_1+\lambda_2)} - c_{(1,2)}\frac{\theta_1+2\lambda_1}{\theta_1+2\lambda_1+\lambda_2}\right] \\
    = & c_{(1)}(x_{1,1}^*+y_1^*) + c_{(2)}(x_{2,2}^*+y_2^*) + c_{(1,2)} (x_{1,2}^* + x_{2,1}^*)  - \varepsilon \frac{\theta_1+2\lambda_1}{\theta_1+2\lambda_1+\lambda_2} \left[ c_{(1)}\frac{\theta_1+\lambda_1}{\theta_1+2\lambda_1} + c_{(2)}\frac{\theta_2+\lambda_2}{\theta_2+2\lambda_2} - c_{(1,2)}\right] \\
    < & c_{(1)}(x_{1,1}^*+y_1^*) + c_{(2)}(x_{2,2}^*+y_2^*) + c_{(1,2)} (x_{1,2}^* + x_{2,1}^*),
\end{align*}
leading to the contradiction. 

\smallskip

(iii) If only \eqref{equ:c2-bound-12} is not binding, we can also get a contradiction in a similar way to sub-case (ii). 

Therefore, both \eqref{equ:c2-bound-12} and \eqref{equ:c2-bound-21} are binding at optimality, that is, $\theta_1 x_{1,2}^* = \lambda_2 y_1^*$ and $\theta_2 x_{2,1}^* = \lambda_1 y_2^*$ when $\Delta_1(\lambda_1,\lambda_2)>0$.
\hfill $\square$
\end{proof}

\begin{lemma} 
\label{lemma:n2-y1-nonneg}
    All optimal solutions satisfy $y_1^* > 0$ and $\theta_2 x_{2,2}^*=\lambda_2 y_2^*$.
\end{lemma}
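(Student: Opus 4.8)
The plan is to prove the two assertions in turn, in each case assuming a hypothetical optimal solution that violates the claim and deriving a contradiction from the binding-structure lemmas (Lemma~\ref{lemma:n2-one-binding}, Lemma~\ref{lemma:n2-cross-binding}) combined with explicit feasible perturbations.

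\emph{Proving $y_1^*>0$.} Suppose $y_1^*=0$. Then \eqref{equ:c2-bound-11} and \eqref{equ:c2-bound-12} (with $\theta_1,\lambda_2>0$) force $x_{1,1}^*=x_{1,2}^*=0$, so \eqref{equ:c2-flow-1} gives $x_{2,1}^*=\lambda_1$; then \eqref{equ:c2-bound-21} yields $y_2^*\geq\theta_2$, and \eqref{equ:c2-flow-2} shows $x_{2,2}^*=(\lambda_2-\lambda_1-y_2^*)/2$, from which one checks that \eqref{equ:c2-bound-22} is \emph{strictly} slack. I then construct a perturbed feasible solution that diverts an amount $\delta>0$ of $x_{2,1}$ into type-1 self-matching and unmatched flow in the ratio $\lambda_1:\theta_1$ (so \eqref{equ:c2-bound-11} stays satisfied) while routing the released $\delta$ into $x_{2,2}$ (legal since \eqref{equ:c2-bound-22} was slack); a short computation gives cost change $\delta\big(\Delta_1(\lambda_1,\lambda_2)-c_{(2)}\theta_2/(2(\theta_2+2\lambda_2))\big)$. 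If $\Delta_1(\lambda_1,\lambda_2)\leq 0$ this is strictly negative, contradicting optimality. If $\Delta_1(\lambda_1,\lambda_2)>0$, I instead perturb to $y_1=\nu>0$, keeping $x_{1,1}=x_{1,2}=0$ and $x_{2,1}=\lambda_1-\nu$ with $x_{2,2}$ adjusted for flow balance; the cost change is $\nu(c_{(1)}-c_{(1,2)})\leq 0$ by Assumption~\ref{assum:cost}, so the perturbed point is also optimal, yet it has $y_1>0$ while $x_{1,2}=0$, contradicting the relation $\theta_1 x_{1,2}^*=\lambda_2 y_1^*$ that Lemma~\ref{lemma:n2-cross-binding} forces when $\Delta_1(\lambda_1,\lambda_2)>0$. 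Either way we reach a contradiction, so $y_1^*>0$.

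\emph{Proving $\theta_2 x_{2,2}^*=\lambda_2 y_2^*$.} Assume \eqref{equ:c2-bound-22} is strictly slack. By Lemma~\ref{lemma:n2-one-binding}, \eqref{equ:c2-bound-21} is then binding; moreover $y_2^*>0$ (else $x_{2,2}^*=0$ and \eqref{equ:c2-bound-22} holds as $0=0$), so $x_{2,1}^*=\lambda_1 y_2^*/\theta_2>0$. I split on the sign of $\Delta_1(\lambda_1,\lambda_2)$. If $\Delta_1(\lambda_1,\lambda_2)\leq 0$: apply a Lemma~\ref{lemma:n2-cross-binding}-style perturbation to $x_{2,1}$ — decrease $x_{2,1}$ by $\varepsilon$ and redistribute it into $(x_{1,1},y_1)$ and $(x_{2,2},y_2)$ in the ratios $\lambda_i:\theta_i$ that leave the slacks of \eqref{equ:c2-bound-11} and \eqref{equ:c2-bound-22} unchanged — so the cost change is $\varepsilon\Delta_1(\lambda_1,\lambda_2)\leq 0$ and the perturbed point is optimal; but \eqref{equ:c2-bound-21} has become slack while \eqref{equ:c2-bound-22} is still slack, contradicting the type-2 half of Lemma~\ref{lemma:n2-one-binding}. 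If $\Delta_1(\lambda_1,\lambda_2)>0$: Lemma~\ref{lemma:n2-cross-binding} forces both \eqref{equ:c2-bound-12} and \eqref{equ:c2-bound-21} binding, so $x_{1,2}^*=\lambda_2 y_1^*/\theta_1$ and $x_{2,1}^*=\lambda_1 y_2^*/\theta_2$; eliminating $x_{1,1},x_{2,2}$ through \eqref{equ:c2-flow-1}--\eqref{equ:c2-flow-2} turns the objective into an affine function of $(y_1,y_2)$ whose two coefficients equal $\tfrac{1}{2\theta_i}\big(c_{(i)}\theta_i+\lambda_{3-i}(2c_{(1,2)}-c_{(1)}-c_{(2)})\big)$, which are strictly positive since $2c_{(1,2)}\geq c_{(1)}+c_{(2)}$ by Assumption~\ref{assum:cost}. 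Minimizing an increasing affine function over the feasible polygon in $(y_1,y_2)$, and using $y_1^*>0$ (first part) and $y_2^*>0$ to discard the degenerate vertices ($y_1=0$, $y_2=0$, or $x_{1,1}=0$ together with \eqref{equ:c2-bound-11} active), forces the minimizer onto the facet where \eqref{equ:c2-bound-22} is active — contradicting slackness.

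I expect the main obstacle to lie in the last case ($\Delta_1(\lambda_1,\lambda_2)>0$ for the binding of \eqref{equ:c2-bound-22}): one must rule out every vertex of the reduced polygon with $x_{2,2}=0$ but $y_2>0$, and at such a candidate vertex a naive flow perturbation is blocked by \eqref{equ:c2-bound-11} and would violate Lemma~\ref{lemma:n2-cross-binding}, so the exclusion has to come from the affine-monotonicity structure (or a direct cost comparison with the vertex where \eqref{equ:c2-bound-11} and \eqref{equ:c2-bound-22} are simultaneously active). The earlier steps, by contrast, are routine once the forced structure $x_{1,1}^*=x_{1,2}^*=0$, $x_{2,1}^*=\lambda_1$ is extracted.
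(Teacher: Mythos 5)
Your overall strategy (contradiction plus local feasible perturbations) is the right one, and your $\Delta_1(\lambda_1,\lambda_2)\leq 0$ subcases are sound. But both of your $\Delta_1(\lambda_1,\lambda_2)>0$ subcases have genuine gaps, and a telling symptom is that your argument never uses $\theta_1\leq\theta_2$ --- which is essential, since the asymmetric conclusion ($y_1^*>0$ but possibly $y_2^*=0$) is false if the ordering is reversed.

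First gap: in the proof of $y_1^*>0$ with $\Delta_1(\lambda_1,\lambda_2)>0$, your perturbation sets $y_1=\nu$, $x_{2,1}=\lambda_1-\nu$, and adjusts $x_{2,2}$ for flow balance in \eqref{equ:c2-flow-2}; that adjustment is $+\nu/2$ on $x_{2,2}$, so the cost change is $\nu\bigl(c_{(1)}+\tfrac{1}{2}c_{(2)}-c_{(1,2)}\bigr)$, not $\nu\bigl(c_{(1)}-c_{(1,2)}\bigr)$. Assumption~\ref{assum:cost} only gives $c_{(1,2)}\geq\max\{c_{(1)},c_{(2)}\\}$, so this quantity can be strictly positive (e.g.\ $c_{(1)}=c_{(2)}=1$, $c_{(1,2)}=1.2$, for which $\Delta_1>0$ is attainable); the perturbed point is then strictly worse, it is not optimal, and no contradiction with Lemma~\ref{lemma:n2-cross-binding} follows. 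Second gap: in the proof of $\theta_2x_{2,2}^*=\lambda_2y_2^*$ with $\Delta_1(\lambda_1,\lambda_2)>0$, you reduce to minimizing an increasing affine function of $(y_1,y_2)$ but, as you yourself note, you do not rule out the vertices of the reduced polygon with $x_{2,2}=0$ and $y_2>0$ (these have \eqref{equ:c2-bound-22} strictly slack, and since $x_{2,2}=0$ is an upper-bound facet for $y_2$ while \eqref{equ:c2-bound-11} is a lower-bound facet for $y_1$, the normal cone at their intersection can contain the descent direction, so "affine monotonicity" alone does not exclude them). This step is exactly where the content of the lemma lives, and it is left unproved; moreover it is made to depend on the first part, which is itself broken.

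The paper avoids both case splits entirely: it shows that in either bad scenario ($y_1^*=0$, or \eqref{equ:c2-bound-22} strictly slack) one has $x_{2,1}^*>0$, $y_2^*>0$, and slack in \eqref{equ:c2-bound-22}, and then applies one perturbation that shifts $\varepsilon$ out of $x_{2,1}$ into $y_1,x_{1,1},x_{1,2}$ proportionally to $\theta_1,\lambda_1,\lambda_2$ while also \emph{decreasing} $y_2$ by $\theta_2\varepsilon/\lambda_1$ and compensating in $x_{2,2}$. The resulting cost change is bounded above by $\varepsilon c_{(2)}\bigl(\theta_1/(\theta_1+2\lambda_1+\lambda_2)-\theta_2/\lambda_1\bigr)<0$ using only $c_{(1,2)}\geq c_{(1)}$, $c_{(1,2)}\geq c_{(2)}$, and $\theta_1\leq\theta_2$ --- no sign condition on $\Delta_1$ is needed. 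If you want to repair your proof, you should replace your $\Delta_1>0$ perturbations with one that simultaneously decreases $y_2$ (which is strictly positive in both bad scenarios) and increases $x_{1,2}$, so that the ordering $\theta_1\leq\theta_2$ can be brought to bear.
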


\begin{proof} {Proof.}
We prove this lemma by contradiction. First consider the case where $y_1^*=0$. According to \eqref{equ:c2-bound-11}, \eqref{equ:c2-bound-12} and \eqref{equ:c2-nonneg-x}, $x_{1,1}^*=x_{1,2}^*=0$. Then \eqref{equ:c2-flow-1} leads to $x_{2,1}^*=\lambda_1 > 0$, \eqref{equ:c2-bound-21} leads to $y_2^* \geq \theta_2$, and \eqref{equ:c2-flow-2} is equivalent to 
$$
   x_{2,2}^* = \frac{\lambda_2 -x_{2,1}^* - y_2^*}{2}  \leq \frac{\lambda_2 - \lambda_1 -\theta_2}{2} < \lambda_2 \leq \frac{\lambda_2y_2^*}{\theta_2},
$$
meaning that strict inequality holds for \eqref{equ:c2-bound-22}.

For the case where $\theta_2 x_{2,2}^*<\lambda_2 y_2^*$, strict inequality also holds for \eqref{equ:c2-bound-22}. We also have $y_2^* > 0$. According to \Cref{lemma:n2-one-binding}, \eqref{equ:c2-bound-21} must be binding, thus $x_{2,1}^* > 0$.

Now for both cases, we try to give another feasible solution with a smaller total cost. Given a small value of $\varepsilon > 0$, let $\hat{y}_1=y_1^*+\theta_1\varepsilon/(\theta_1+2\lambda_1+\lambda_2)$, $\hat{x}_{1,1} = x_{1,1}^*+\lambda_1\varepsilon/(\theta_1+2\lambda_1+\lambda_2)$, $\hat{x}_{1,2} = x_{1,2}^*+\lambda_2\varepsilon/(\theta_1+2\lambda_1+\lambda_2)$, $\hat{x}_{2,1} = x_{2,1}^* - \varepsilon$, $\hat{y}_2 = y_2^* - \theta_2\varepsilon/\lambda_1$, and $\hat{x}_{2,2} = x_{2,2}^* + \left[(\theta_1+2\lambda_1)/(\theta_1+2\lambda_1+\lambda_2)+\theta_2/\lambda_1 \right]\varepsilon/2$. 

For this new solution $\boldsymbol{\hat{x}}, \boldsymbol{\hat{y}}$, one can verify that \eqref{equ:c2-flow-1}, \eqref{equ:c2-flow-2}, \eqref{equ:c2-bound-11}, \eqref{equ:c2-bound-12}, and \eqref{equ:c2-bound-21} still hold. For example, for \eqref{equ:c2-flow-1}, we have
$$
2\hat{x}_{1,1}+\hat{x}_{1,2}+\hat{x}_{2,1}+\hat{y}_1 = 2x_{1,1}^* + x_{1,2}^*+x_{2,1}^* + y_1^* + \left[\frac{\theta_1+2\lambda_1+\lambda_2}{\theta_1+2\lambda_1+\lambda_2} - 1\right]\varepsilon = \lambda_1.
$$
For \eqref{equ:c2-bound-22}, since $\theta_2 x_{2,2}^* < \lambda_2y_2^*$ holds with strict inequality, we can guarantee that $\theta_2 \hat{x}_{2,2} < \lambda_2 \hat{y}_2$ still holds by choosing a small enough value of $\varepsilon$. Similarly, non-negative constraints \eqref{equ:c2-nonneg-x} and \eqref{equ:c2-nonneg-y} hold since $x_{2,1}^* > 0$ and $y_2^* > 0$.

Under this new feasible solution, the total cost is calculated as
\begin{align*}
        & c_{(1)}(\hat{x}_{1,1}+\hat{y}_1) + c_{(2)}(\hat{x}_{2,2}+\hat{y}_2) + c_{(1,2)} (\hat{x}_{1,2} + \hat{x}_{2,1}) \\
        = & c_{(1)}(x_{1,1}^*+y_1^*) + c_{(2)}(x_{2,2}^*+y_2^*) + c_{(1,2)} (x_{1,2}^* + x_{2,1}^*) \\
         & + \varepsilon\left[ c_{(1)}\frac{\theta_1+\lambda_1}{\theta_1+2\lambda_1+\lambda_2} + \frac{c_{(2)}}{2}\left(\frac{\theta_1+2\lambda_1}{\theta_1+2\lambda_1+\lambda_2} - \frac{\theta_2}{\lambda_1}\right) - c_{(1,2)} \frac{\theta_1+2\lambda_1}{\theta_1+2\lambda_1+\lambda_2}\right] \\
        \stackrel{(a)}{\leq} & c_{(1)}(x_{1,1}^*+y_1^*) + c_{(2)}(x_{2,2}^*+y_2^*) + c_{(1,2)} (x_{1,2}^* + x_{2,1}^*) \\
         & + \varepsilon\left[ c_{(1)}\frac{\theta_1+\lambda_1}{\theta_1+2\lambda_1+\lambda_2} + \frac{c_{(2)}}{2}\left(\frac{\theta_1+2\lambda_1}{\theta_1+2\lambda_1+\lambda_2} - \frac{\theta_2}{\lambda_1}\right) - c_{(1)} \frac{\theta_1+\lambda_1}{\theta_1+2\lambda_1+\lambda_2} - c_{(2)} \frac{\lambda_1}{\theta_1+2\lambda_1+\lambda_2}\right] \\
        = & c_{(1)}(x_{1,1}^*+y_1^*) + c_{(2)}(x_{2,2}^*+y_2^*) + c_{(1,2)} (x_{1,2}^* + x_{2,1}^*) + \frac{\varepsilon c_{(2)}}{2} \left(\frac{\theta_1}{\theta_1+2\lambda_1+\lambda_2}-\frac{\theta_2}{\lambda_1} \right) \\
        \stackrel{(b)}{<} & c_{(1)}(x_{1,1}^*+y_1^*) + c_{(2)}(x_{2,2}^*+y_2^*) + c_{(1,2)} (x_{1,2}^* + x_{2,1}^*),
    \end{align*}
where (a) is because $c_{(1,2)}\geq c_{(1)}$ and $c_{(1,2)} \geq c_{(2)}$, and (b) is because $\theta_1 \leq \theta_2$. Therefore, a new feasible solution with a lower cost is obtained, leading to a contradiction. \hfill $\square$
\end{proof}

\begin{lemma} 
\label{lemma:n2-y2-nonneg}
    All optimal solutions satisfy $y_2^* > 0$ if $\Delta_2(\lambda_1,\lambda_2)<0$ or $\Delta_3(\lambda_1,\lambda_2)<0$, and all optimal solutions satisfy $\theta_1 x_{1,1}^*=\lambda_1 y_1^*$ if $\Delta_2(\lambda_1,\lambda_2)<0$.
\end{lemma}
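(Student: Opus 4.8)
The plan is to prove both assertions by contradiction, and both rest on first pinning down the structure of an optimal solution with $y_2^\ast=0$. So I would begin by assuming $y_2^\ast=0$: then \eqref{equ:c2-bound-21}, \eqref{equ:c2-bound-22} and \eqref{equ:c2-nonneg-x} force $x_{2,1}^\ast=x_{2,2}^\ast=0$; substituting into the flow balance \eqref{equ:c2-flow-2} gives $x_{1,2}^\ast=\lambda_2$, whence \eqref{equ:c2-bound-12} yields $y_1^\ast\ge\theta_1$; since \eqref{equ:c2-flow-1} and nonnegativity force $x_{1,1}^\ast\le\lambda_1/2<\lambda_1$, Lemma~\ref{lemma:n2-one-binding} gives $y_1^\ast=\max\{\theta_1 x_{1,1}^\ast/\lambda_1,\ \theta_1\}=\theta_1$, and then \eqref{equ:c2-flow-1} forces $x_{1,1}^\ast=(\lambda_1-\lambda_2-\theta_1)/2=\Delta_3(\lambda_1,\lambda_2)/2$. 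This identity immediately handles the case $\Delta_3<0$: it would make $x_{1,1}^\ast<0$, which is impossible, so $y_2^\ast>0$ whenever $\Delta_3<0$.

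For the case $\Delta_2(\lambda_1,\lambda_2)<0$ (now with $\Delta_3\ge0$, so the above candidate is well defined), I would exhibit a perturbed feasible solution of strictly smaller cost. Starting from $\bigl(x_{1,1}^\ast,x_{1,2}^\ast,x_{2,1}^\ast,x_{2,2}^\ast,y_1^\ast,y_2^\ast\bigr)=\bigl(\Delta_3/2,\ \lambda_2,\ 0,\ 0,\ \theta_1,\ 0\bigr)$, I would decrease $x_{1,2}$ by a small $a>0$, route the freed type-$2$ capacity into a reverse match $\hat x_{2,1}=\varepsilon$ and into $(\hat x_{2,2},\hat y_2)$ while keeping \eqref{equ:c2-bound-22} tight, choosing $\varepsilon=\lambda_1 a/(\lambda_1+2\lambda_2+\theta_2)$ so that \eqref{equ:c2-bound-21} becomes exactly tight, and absorb the freed type-$1$ capacity into $\hat x_{1,1}$ and $\hat y_1$ while keeping \eqref{equ:c2-bound-12} tight. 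Then I would verify that for $a$ small all constraints hold (using $\Delta_3\ge0$ and the strict slack of \eqref{equ:c2-bound-11} at the candidate) and that the resulting change in the objective equals exactly $a\,\Delta_2(\lambda_1,\lambda_2)\,\frac{2\lambda_2+\theta_2}{\lambda_1+2\lambda_2+\theta_2}$ --- which is precisely where the opaque form of $\Delta_2$ comes from, the denominator $\lambda_1+2\lambda_2+\theta_2$ being the normalization forced by tightening \eqref{equ:c2-bound-21} and \eqref{equ:c2-bound-22} simultaneously. Since this quantity is negative, optimality is contradicted and $y_2^\ast>0$.

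For the second assertion, I would assume $\Delta_2<0$ (so $y_2^\ast>0$ by the first part, while $y_1^\ast>0$ and $\theta_2 x_{2,2}^\ast=\lambda_2 y_2^\ast$ by Lemma~\ref{lemma:n2-y1-nonneg}) yet that \eqref{equ:c2-bound-11} is slack, and derive a contradiction. Lemma~\ref{lemma:n2-one-binding} then forces \eqref{equ:c2-bound-12} to bind, so $x_{1,2}^\ast=\lambda_2 y_1^\ast/\theta_1>0$; since $x_{1,2}^\ast>0$ is incompatible with $\Delta_1<0$ (cf.\ the proof of Lemma~\ref{lemma:n2-cross-binding}), we are in the regime where Lemma~\ref{lemma:n2-cross-binding} also makes \eqref{equ:c2-bound-21} bind. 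With \eqref{equ:c2-bound-12}, \eqref{equ:c2-bound-21} and \eqref{equ:c2-bound-22} all tight and \eqref{equ:c2-bound-11} slack, the feasible set near the optimum is a one-parameter family (decrease $y_1$, track $x_{1,2}$ through the tight \eqref{equ:c2-bound-12}, push the released capacity through the tight type-$2$ constraints, and send the released type-$1$ capacity into $x_{1,1}$), along which the objective is affine; a computation parallel to the one above shows its slope is a strictly positive multiple of $\Delta_2$, so $\Delta_2<0$ forces the cost to strictly decrease as $x_{1,1}$ grows, contradicting slackness of \eqref{equ:c2-bound-11}. (When $\Delta_1<0$ there is no cross-matching, so \eqref{equ:c2-bound-12} is automatically slack and Lemma~\ref{lemma:n2-one-binding} makes \eqref{equ:c2-bound-11} bind directly.)

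I expect the crux to be the perturbation in the second paragraph. The naive moves --- dumping the freed type-$1$ mass entirely into $x_{1,1}$, or entirely into $y_1$, without introducing the reverse match $x_{2,1}$ --- give a cost change whose coefficient is strictly larger than $\Delta_2$ (it is missing exactly the term $-\frac{c_{(1)}}{2}\cdot\frac{\theta_1\lambda_1}{\lambda_2(\theta_2+2\lambda_2)}$), so they fail to produce a contradiction; the reverse cross-match $x_{2,1}$, which becomes admissible only because the perturbation itself lifts $y_2$ above $0$, is exactly what injects the factor $\lambda_1$ appearing in $\Delta_2$. Keeping the bookkeeping of the six primal variables and four inequality constraints consistent, and confirming that nonnegativity and the remaining slack inequalities survive for small $a$, is the delicate part; everything else follows from the earlier lemmas and routine algebra.
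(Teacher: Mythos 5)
Your proposal matches the paper's proof in all essentials: the paper likewise rules out $y_2^*=0$ under $\Delta_3<0$ by deriving $x_{1,1}^*\leq\Delta_3(\lambda_1,\lambda_2)/2<0$, and under $\Delta_2<0$ it applies exactly the index-swapped version of the perturbation from Lemma~\ref{lemma:n2-y1-nonneg} (decrease $x_{1,2}$, introduce the reverse match $x_{2,1}$, rebalance proportionally), arriving at the same cost change $\varepsilon\,\Delta_2(\lambda_1,\lambda_2)\,\tfrac{\theta_2+2\lambda_2}{\theta_2+\lambda_1+2\lambda_2}$ that you identify; the same perturbation, started from a solution with \eqref{equ:c2-bound-11} slack (which forces \eqref{equ:c2-bound-12} to bind and $x_{1,2}^*>0$ via Lemmas~\ref{lemma:n2-one-binding} and~\ref{lemma:n2-y1-nonneg}), also delivers the second assertion. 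The only wrinkle is in your third paragraph: you route through Lemma~\ref{lemma:n2-cross-binding} to make \eqref{equ:c2-bound-21} bind, which fails in the boundary case $\Delta_1(\lambda_1,\lambda_2)=0$ (the lemma only guarantees \emph{existence} of a cross-match-free optimum there, not that every optimum has $x_{1,2}^*=0$); this detour is unnecessary, since the proportional perturbation preserves \eqref{equ:c2-bound-21} and \eqref{equ:c2-bound-22} whether or not they are tight, which is how the paper avoids the case split.
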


\begin{proof} {Proof.}

The idea of proving this lemma is the same as \Cref{lemma:n2-y1-nonneg}. Again prove this lemma by contradiction. Consider the case where $y_2^*=0$. We have $x_{2,1}^*=x_{2,2}^*=0, x_{1,2}^*=\lambda_2 > 0$. \eqref{equ:c2-bound-12} leads to $y_1^* \geq \theta_1$, and \eqref{equ:c2-flow-1} leads to 
$$
   x_{1,1}^* = \frac{\lambda_1 -x_{1,2}^* - y_1^*}{2}  \leq \frac{\lambda_1 - \lambda_2 -\theta_1}{2} = \frac{\Delta_3(\lambda_1,\lambda_2)}{2}.
$$
Thus if $\Delta_3(\lambda_1,\lambda_2)<0$, $x_{1,1}^*$ will become negative, which immediately leads to the contradiction.

When $\Delta_3(\lambda_1,\lambda_2)\geq0$, we continue by following the proving process of \Cref{lemma:n2-y1-nonneg} but swap all indexes of 1 and 2. Similarly we can define a solution $\boldsymbol{\hat{x}}, \boldsymbol{\hat{y}}$ with the total cost calculated as
\begin{align*}
        & c_{(1)}(\hat{x}_{1,1}+\hat{y}_1) + c_{(2)}(\hat{x}_{2,2}+\hat{y}_2) + c_{(1,2)} (\hat{x}_{1,2} + \hat{x}_{2,1}) \\
        = & c_{(1)}(x_{1,1}^*+y_1^*) + c_{(2)}(x_{2,2}^*+y_2^*) + c_{(1,2)} (x_{1,2}^* + x_{2,1}^*) \\
         & + \varepsilon\left[\frac{c_{(1)}}{2}\left(\frac{\theta_2+2\lambda_2}{\theta_2+\lambda_1+2\lambda_2} - \frac{\theta_1}{\lambda_2}\right) + c_{(2)}\frac{\theta_2+\lambda_2}{\theta_2+\lambda_1+2\lambda_2} - c_{(1,2)} \frac{\theta_2+2\lambda_2}{\theta_2+\lambda_1+2\lambda_2}\right] \\
        = & c_{(1)}(x_{1,1}^*+y_1^*) + c_{(2)}(x_{2,2}^*+y_2^*) + c_{(1,2)} (x_{1,2}^* + x_{2,1}^*) + \varepsilon\frac{\theta_2+2\lambda_2}{\theta_2+\lambda_1+2\lambda_2}\Delta_2(\lambda_1,\lambda_2),
    \end{align*}
thus if $\Delta_2(\lambda_1,\lambda_2)<0$, a new feasible solution with a lower cost is obtained, also leading to a contradiction. \hfill $\square$
\end{proof}

Using the above lemmas, we are now ready to give a proof of \Cref{lemma:n2-solution}. We prove \Cref{lemma:n2-solution-same-theta} later by treating it as a special case.

\begin{proof}{Proof of \Cref{lemma:n2-solution}.} We first show that an optimal solution to \eqref{equ:c2} is characterized by \Cref{tab:n2-solution}. Note that
\begin{align*}
    \Delta_1(\lambda_1,\lambda_2)-\Delta_2(\lambda_1,\lambda_2) &= c_{(1)}\frac{\theta_1+\lambda_1}{\theta_1+2\lambda_1} - \frac{c_{(1)}}{2}\left[1 - \frac{\theta_1(\theta_2+\lambda_1+2\lambda_2)}{\lambda_2(\theta_2+2\lambda_2)}\right]\\
    &> \frac{c_{(1)}}{2} - \frac{c_{(1)}}{2}=0.
\end{align*}

Below, we prove the four cases in \Cref{tab:n2-solution} one by one.

\smallskip

\textbf{Case (i): } $\Delta_1(\lambda_1,\lambda_2) \leq 0$, and thus $\Delta_2(\lambda_1,\lambda_2)<0$. According to Lemmas \ref{lemma:n2-y1-nonneg} and \ref{lemma:n2-y2-nonneg}, $y_1^*,y_2^*>0$, and $\theta_i x_{i,i}^* = \lambda_i y_i^*$, $i=1,2$. According to \Cref{lemma:n2-cross-binding}, there exists an optimal solution satisfying $x_{1,2}^*=x_{2,1}^*=0$.

\smallskip

\textbf{Case (ii): } $\Delta_1(\lambda_1,\lambda_2) > 0, \Delta_2(\lambda_1,\lambda_2) < 0$. According to Lemmas \ref{lemma:n2-y1-nonneg} and \ref{lemma:n2-y2-nonneg}, $y_1^*,y_2^*>0$, and $\theta_i x_{i,i}^* = \lambda_i y_i^*$, $i=1,2$. According to \Cref{lemma:n2-cross-binding}, the optimal solution must also satisfy $\theta_1x_{1,2}^*=\lambda_2y_1^*$ and $\theta_2x_{2,1}^*=\lambda_1y_2^*$.

\smallskip

\textbf{Case (iii): } $\Delta_2(\lambda_1,\lambda_2) \geq 0, \Delta_3(\lambda_1,\lambda_2) < 0$, and thus $\Delta_1(\lambda_1,\lambda_2) > 0$. According to Lemmas \ref{lemma:n2-y1-nonneg} and \ref{lemma:n2-y2-nonneg}, $y_1^*,y_2^*>0$, and $\theta_2 x_{2,2}^* = \lambda_2 y_2^*$. According to \Cref{lemma:n2-cross-binding}, $\theta_1x_{1,2}^*=\lambda_2y_1^*$ and $\theta_2x_{2,1}^*=\lambda_1y_2^*$. Then all we need is to prove that there is an optimal solution with $x_{1,1}^*=0$.

Assume for contradiction that $x_{1,1}^*>0$. Again we can re-use the same method to construct another feasible solution as in Lemmas \ref{lemma:n2-y1-nonneg} and \ref{lemma:n2-y2-nonneg}. Given a small value of $\varepsilon > 0$, let $\hat{y}_2=y_2^*-\theta_2\varepsilon/(\theta_2+\lambda_1+2\lambda_2)$, $\hat{x}_{2,2} = x_{2,2}^*-\lambda_2\varepsilon/(\theta_2+\lambda_1+2\lambda_2)$, $\hat{x}_{2,1} = x_{2,1}^*-\lambda_1\varepsilon/(\theta_2+\lambda_1+2\lambda_2)$, $\hat{x}_{1,2} = x_{1,2}^* + \varepsilon$, $\hat{y}_1 = y_1^* + \theta_1\varepsilon/\lambda_2$, and $\hat{x}_{1,1} = x_{1,1}^* - \left[(\theta_2+2\lambda_2)/(\theta_2+\lambda_1+2\lambda_2)+\theta_1/\lambda_2 \right]\varepsilon/2$. The new total cost is calculated as
\begin{align*}
        & c_{(1)}(\hat{x}_{1,1}+\hat{y}_1) + c_{(2)}(\hat{x}_{2,2}+\hat{y}_2) + c_{(1,2)} (\hat{x}_{1,2} + \hat{x}_{2,1}) \\
        = & c_{(1)}(x_{1,1}^*+y_1^*) + c_{(2)}(x_{2,2}^*+y_2^*) + c_{(1,2)} (x_{1,2}^* + x_{2,1}^*) \\
         & - \varepsilon\left[\frac{c_{(1)}}{2}\left(\frac{\theta_2+2\lambda_2}{\theta_2+\lambda_1+2\lambda_2} - \frac{\theta_1}{\lambda_2}\right) + c_{(2)}\frac{\theta_2+\lambda_2}{\theta_2+\lambda_1+2\lambda_2} - c_{(1,2)} \frac{\theta_2+2\lambda_2}{\theta_2+\lambda_1+2\lambda_2}\right] \\
        = & c_{(1)}(x_{1,1}^*+y_1^*) + c_{(2)}(x_{2,2}^*+y_2^*) + c_{(1,2)} (x_{1,2}^* + x_{2,1}^*) - \varepsilon\frac{\theta_2+2\lambda_2}{\theta_2+\lambda_1+2\lambda_2}\Delta_2(\lambda_1,\lambda_2) \\
        \leq & c_{(1)}(x_{1,1}^*+y_1^*) + c_{(2)}(x_{2,2}^*+y_2^*) + c_{(1,2)} (x_{1,2}^* + x_{2,1}^*),
    \end{align*}
thus by decreasing $x_{1,1}^*$, we can obtain another feasible solution with a total cost no more than the original cost. This is sufficient to prove that $x_{1,1}^*=0$ corresponds to an optimal solution.

\smallskip

\textbf{Case (iv): } $\Delta_2(\lambda_1,\lambda_2) \geq 0, \Delta_3(\lambda_1,\lambda_2) \geq 0$, and thus $\Delta_1(\lambda_1,\lambda_2) > 0$. \Cref{lemma:n2-y1-nonneg} leads to $y_1^*>0$ and $\theta_2 x_{2,2}^* = \lambda_2 y_2^*$. According to \Cref{lemma:n2-cross-binding}, $\theta_1x_{1,2}^*=\lambda_2y_1^*$ and $\theta_2x_{2,1}^*=\lambda_1y_2^*$. Then all we need is to prove that $y_2^*=0$.

Again assume $y_2^*>0$. Since $\theta_1x_{1,2}^*=\lambda_2y_1^*$ and from \eqref{equ:c2-flow-2}, $x_{1,2}^*<\lambda_2$, we have $y_1^*<\theta_1$. By subtracting \eqref{equ:c2-flow-2} from \eqref{equ:c2-flow-1}, we obtain
\begin{align*}
\lambda_1-\lambda_2 = (y_1^*+2x_{1,1}^*)-(y_2^*+2x_{2,2}^*) < \theta_1 + 2x_{1,1}^*.
\end{align*}
Since $\Delta_3(\lambda_1,\lambda_2) \geq 0$, $\lambda_1-\lambda_2\geq\theta_1$. Therefore, in order to satisfy $\lambda_1-\lambda_2 < \theta_1 + 2x_{1,1}^*$, we must have $x_{1,1}^* > 0$.

Given $y_2^*,x_{2,1}^*,x_{2,2}^*,x_{1,1}^* > 0$, we can immediately re-use the idea in case (iii) to construct a new feasible solution by decreasing $y_2^*$. The new feasible solution has a total cost no more than the original cost, thus $y_2^*=0$ corresponds to an optimal solution.

\smallskip

In addition, from \Cref{lemma:n2-y1-nonneg}, all optimal solutions satisfy $y_1^*>0$. From \Cref{lemma:n2-y2-nonneg}, all optimal solutions satisfy $y_2^*>0$ if $\Delta_2(\lambda_1, \lambda_2)<0$ or $\Delta_3(\lambda_1, \lambda_2)<0$. On the other hand, when $\Delta_2(\lambda_1, \lambda_2), \Delta_3(\lambda_1, \lambda_2)\geq0$, according to \Cref{tab:n2-solution}, there exists an optimal solution with $y_2^*=0$. Therefore, all optimal solutions satisfy $y_2^*>0$ if and only if $\Delta_2(\lambda_1, \lambda_2)<0$ or $\Delta_3(\lambda_1, \lambda_2)<0$.

\smallskip

Lastly, by fully characterizing the optimal solutions, we can explicitly determine the values of $y_1^*$ and $y_2^*$ under four different cases, thereby providing the optimal solutions in closed form.

\smallskip

\textbf{Case (i):} Solving \eqref{equ:c2-flow-1} and \eqref{equ:c2-flow-2} when \eqref{equ:c2-bound-11}, \eqref{equ:c2-bound-22} are binding and $x_{1,2}^*=x_{2,1}^*=0$, we obtain 
\begin{align*}
    y_1^* = \frac{\theta_1\lambda_1}{\theta_1+2\lambda_1},\ y_2^* = \frac{\theta_2\lambda_2}{\theta_2+2\lambda_2},
\end{align*}
and $x_{1,2}^*=x_{2,1}^*=0$, $x_{1,1}^*=\lambda_1y_1^*/\theta_1$, $x_{2,2}^*=\lambda_2y_2^*/\theta_2$.

\smallskip

\textbf{Case (ii):} Solving \eqref{equ:c2-flow-1} and \eqref{equ:c2-flow-2} when \eqref{equ:c2-bound-11}, \eqref{equ:c2-bound-12}, \eqref{equ:c2-bound-21} and \eqref{equ:c2-bound-22} are all binding, we obtain 
\begin{align*}
    y_1^* &= \frac{\theta_1\lambda_1(\theta_2+\lambda_1+\lambda_2)}{2(\lambda_1+\lambda_2)^2+\theta_1(\lambda_1+2\lambda_2)+\theta_2(\lambda_2+2\lambda_1)+\theta_1\theta_2}, \\
    y_2^* &= \frac{\theta_2\lambda_2(\theta_1+\lambda_1+\lambda_2)}{2(\lambda_1+\lambda_2)^2+\theta_1(\lambda_1+2\lambda_2)+\theta_2(\lambda_2+2\lambda_1)+\theta_1\theta_2},
\end{align*}
and $x_{1,1}^*=\lambda_1y_1^*/\theta_1$,$x_{1,2}^*=\lambda_2y_1^*/\theta_1$,$x_{2,1}^*=\lambda_1y_2^*/\theta_2$,$x_{2,2}^*=\lambda_2y_2^*/\theta_2$.

\smallskip 

\textbf{Case (iii):} Solving \eqref{equ:c2-flow-1} and \eqref{equ:c2-flow-2} subject to the binding constraints \eqref{equ:c2-bound-12}, \eqref{equ:c2-bound-21}, \eqref{equ:c2-bound-22}, and $x_{1,1}^*=0$, we obtain
\begin{align*}
    y_1^* &= \frac{\theta_1\lambda_1(\theta_2+\lambda_1+\lambda_2)}{2\lambda_2^2+\theta_1(\lambda_1+2\lambda_2)+\theta_2\lambda_2+\theta_1\theta_2}, \\
    y_2^* &= \frac{\theta_2\lambda_2(\theta_1-\lambda_1+\lambda_2)}{2\lambda_2^2+\theta_1(\lambda_1+2\lambda_2)+\theta_2\lambda_2+\theta_1\theta_2},
\end{align*}
and $x_{1,1}^*=0$,$x_{1,2}^*=\lambda_2y_1^*/\theta_1$,$x_{2,1}^*=\lambda_1y_2^*/\theta_2$,$x_{2,2}^*=\lambda_2y_2^*/\theta_2$.

\smallskip
\textbf{Case (iv):} Solving \eqref{equ:c2-flow-1} and \eqref{equ:c2-flow-2} subject to $y_2^*=x_{2,1}^*=x_{2,2}^*=0$ and the binding constraint \eqref{equ:c2-bound-12}, we obtain
\begin{align*}
    y_1^* = \theta_1,\ x_{1,1}^*= \frac{\lambda_1-\lambda_2-\theta_1}{2},\ x_{1,2}^*=\lambda_2.
\end{align*}
\hfill $\square$
\end{proof}

\begin{proof}{Proof of \Cref{lemma:n2-solution-same-theta}}
Notice that when $\theta_1=\theta_2=\theta$, 
\begin{align*}
    (\theta+2\lambda_2)\Delta_2(\lambda_1,\lambda_2) &= \frac{c_{(1)}}{2}\left[(\theta+2\lambda_2) - \frac{\theta(\theta+\lambda_1+2\lambda_2)}{\lambda_2}\right] +c_{(2)}(\theta+\lambda_2) - c_{(1,2)}(\theta+2\lambda_2) \\
    & \leq \frac{c_{(1)}}{2}\left[(\theta+2\lambda_2) - \frac{\theta(\theta+\lambda_1+2\lambda_2)}{\lambda_2}\right] +c_{(2)}(\theta+\lambda_2) - c_{(1)}\lambda_2 - c_{(2)}(\theta+\lambda_2) \\
    & = \frac{c_{(1)}\theta}{2}\left(1-\frac{\theta+\lambda_1+2\lambda_2}{\lambda_2}\right) \\
    & < 0,
\end{align*}
thus $\Delta_2(\lambda_1,\lambda_2) < 0$ always holds. Therefore, we do not need to consider the last two cases in \Cref{lemma:n2-solution}. For the first two cases in \Cref{lemma:n2-solution}, we can solve \eqref{equ:c2-flow-1} and \eqref{equ:c2-flow-2} with respect to $y_1^*$ and $y_2^*$, and obtain the optimal solution. This leads to the results stated in \Cref{tab:n2-solution-same-theta}. In addition, since $\Delta_2(\lambda_1,\lambda_2) < 0$, according to \Cref{lemma:n2-y1-nonneg} and \Cref{lemma:n2-y2-nonneg}, all optimal solutions satisfy $y_1^*, y_2^*>0$.
\hfill $\square$
\end{proof}

\subsection{Proof of \Cref{prop:n2-concave-same-theta}}
\smallskip
\begin{lemma}
\label{lemma:convex}
Given any 2-dimensional vector $\boldsymbol{\alpha}$ and any nonnegative vector $\boldsymbol{\beta} \geq 0$, the function $(\boldsymbol{\alpha}^T \boldsymbol{\lambda})^2/(\theta+\boldsymbol{\beta}^T\boldsymbol{\lambda})$ is convex with respect to $\boldsymbol{\lambda} \geq \boldsymbol{0}$.
\end{lemma}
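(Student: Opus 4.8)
The plan is to recognize $(\boldsymbol{\alpha}^T\boldsymbol{\lambda})^2/(\theta+\boldsymbol{\beta}^T\boldsymbol{\lambda})$ as the composition of the quadratic-over-linear function $h(u,v)=u^2/v$ with an affine map, and then invoke the standard fact that precomposition with an affine map preserves convexity. Concretely, I would set $a(\boldsymbol{\lambda}):=\boldsymbol{\alpha}^T\boldsymbol{\lambda}$ and $b(\boldsymbol{\lambda}):=\theta+\boldsymbol{\beta}^T\boldsymbol{\lambda}$. Since $\theta>0$, $\boldsymbol{\beta}\geq\boldsymbol{0}$, and $\boldsymbol{\lambda}\geq\boldsymbol{0}$, we have $b(\boldsymbol{\lambda})\geq\theta>0$ on the entire (convex) domain $\{\boldsymbol{\lambda}\geq\boldsymbol{0}\}$, so the ratio is well defined and smooth there, and its image under $\boldsymbol{\lambda}\mapsto(a(\boldsymbol{\lambda}),b(\boldsymbol{\lambda}))$ lies in $\mathbb{R}\times(0,\infty)$, the domain on which $h$ is known to be convex (see, e.g., Section 3.1.5 in \citealt{Boyd_Vandenberghe_2004}).

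For a self-contained argument I would instead compute the Hessian directly. Writing $f(\boldsymbol{\lambda})=a^2/b$ with $\nabla a=\boldsymbol{\alpha}$ and $\nabla b=\boldsymbol{\beta}$, one obtains $\nabla f=\tfrac{2a}{b}\boldsymbol{\alpha}-\tfrac{a^2}{b^2}\boldsymbol{\beta}$ and, differentiating once more and collecting terms over the common denominator $b^3$,
\[
\nabla^2 f(\boldsymbol{\lambda})=\frac{2}{b^3}\bigl(b\,\boldsymbol{\alpha}-a\,\boldsymbol{\beta}\bigr)\bigl(b\,\boldsymbol{\alpha}-a\,\boldsymbol{\beta}\bigr)^{T}.
\]
Because $b>0$, this is a nonnegative scalar times a rank-one positive semidefinite matrix, hence $\nabla^2 f(\boldsymbol{\lambda})\succeq 0$ for every $\boldsymbol{\lambda}\geq\boldsymbol{0}$; convexity on the convex set $\{\boldsymbol{\lambda}\geq\boldsymbol{0}\}$ follows from the second-order characterization of convexity.

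There is essentially no obstacle here; the only point requiring (trivial) care is verifying that the denominator stays bounded away from zero, which is immediate from $\theta>0$ and the sign restrictions $\boldsymbol{\beta},\boldsymbol{\lambda}\geq\boldsymbol{0}$ — without this, $h$ need not be convex near $v=0$. I would also remark that neither the statement nor the proof uses the ambient dimension being two, so the argument holds verbatim for any dimension; restricting to $2$-dimensional $\boldsymbol{\lambda}$ is all that is needed for its later use in the proof of Theorem \ref{thm:n2-concave-same-theta}.
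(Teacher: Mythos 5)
Your proposal is correct and its main line of argument—viewing $(\boldsymbol{\alpha}^T\boldsymbol{\lambda})^2/(\theta+\boldsymbol{\beta}^T\boldsymbol{\lambda})$ as the quadratic-over-linear function $u^2/v$ (convex for $v>0$) precomposed with an affine map whose second coordinate stays above $\theta>0$—is exactly the paper's proof. Your supplementary direct computation $\nabla^2 f=\tfrac{2}{b^3}\bigl(b\,\boldsymbol{\alpha}-a\,\boldsymbol{\beta}\bigr)\bigl(b\,\boldsymbol{\alpha}-a\,\boldsymbol{\beta}\bigr)^{T}$ is also correct and gives a cleaner certificate of positive semidefiniteness than the paper's diagonal-plus-determinant check, and your observation that nothing depends on the dimension being two is accurate.
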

\begin{proof} {Proof.}
    Define the function $f(a,b) = a^2/b$, we can verify that $f(a,b)$ is convex when $b >0$, based on its positive semidefinite Hessian matrix
    $\begin{bmatrix}
         2/b & -2a/b^2\\
        -2a/b^2 & 2a^2/b^3
    \end{bmatrix}$: the diagonal elements are positive, and the determinant is 0. Then the function $f((\boldsymbol{\alpha}^T \boldsymbol{\lambda})^2,\theta+\boldsymbol{\beta}^T\boldsymbol{\lambda})$ is also convex since $\theta+\boldsymbol{\beta}^T\boldsymbol{\lambda} > \boldsymbol{0}$ and an affine mapping preserves convexity (see Section 3.2.2, \citealt{Boyd_Vandenberghe_2004}). $\hfill\square$
\end{proof}

\begin{proof}{Proof of \Cref{prop:n2-concave-same-theta}.} Based on the results of \Cref{lemma:n2-solution-same-theta}, define two cost functions as follows:
\begin{align*}
    c^{(1)}(\boldsymbol{\lambda}) & = \frac{c_{(1)}\lambda_1(\theta+\lambda_1)}{\theta+2\lambda_1} + \frac{c_{(2)}\lambda_2(\theta+\lambda_2)}{\theta+2\lambda_2},\\
    c^{(2)}(\boldsymbol{\lambda}) & = \frac{c_{(1)}\lambda_1(\theta+\lambda_1)+c_{(2)}\lambda_2(\theta+\lambda_2)+2c_{(1,2)}\lambda_1\lambda_2}{\theta+2\lambda_1+2\lambda_2}.
\end{align*}
where the first function $c^{(1)}(\boldsymbol{\lambda})$ corresponds to the optimal cost function when $\Delta_1(\lambda_1,\lambda_2)\leq0$, and $c^{(2)}(\boldsymbol{\lambda})$ is the optimal cost when $\Delta_1(\lambda_1,\lambda_2)>0$. 

For $c^{(1)}(\boldsymbol{\lambda})$, notice that
\begin{align*}
    c^{(1)}(\boldsymbol{\lambda}) &= \frac{c_{(1)}\lambda_1(\theta+\lambda_1)}{\theta+2\lambda_1} + \frac{c_{(2)}\lambda_2(\theta+\lambda_2)}{\theta+2\lambda_2} \\
    & = c_{(1)}\lambda_1+c_{(2)}\lambda_2 - c_{(1)}\frac{\lambda_1^2}{\theta+2\lambda_1} - c_{(2)} \frac{\lambda_2^2}{\theta+2\lambda_2}.
\end{align*}
Using \Cref{lemma:convex}, by choosing $\boldsymbol{\alpha} = (1,0)^T$ and $\boldsymbol{\beta} = (2,0)^T$, we obtain the result that $\lambda_1^2/(\theta+2\lambda_1)$ is convex. Similarly, by choosing $\boldsymbol{\alpha} = (0,1)^T$ and $\boldsymbol{\beta} = (0,2)^T$, $\lambda_2^2/(\theta+2\lambda_2)$ is convex. $c^{(1)}(\boldsymbol{\lambda})$ is expressed as a linear function minus convex functions, and is therefore concave.

Note that if $c_{(1)}+c_{(2)}-c_{(1,2)} \leq 0$, $$
\Delta_1(\lambda_1,\lambda_2) = c_{(1)}\frac{\theta+\lambda_1}{\theta+2\lambda_1} + c_{(2)}\frac{\theta+\lambda_2}{\theta+2\lambda_2} - c_{(1,2)} < c_{(1)} + c_{(2)} - c_{(1,2)} \leq 0,$$ thus in this case, $c(\boldsymbol{\lambda}) = c^{(1)}(\boldsymbol{\lambda})$ and is concave. The following analysis considers the case when $c_{(1)}+c_{(2)}-c_{(1,2)} > 0$.

For $c^{(2)}(\boldsymbol{\lambda})$, we can reformulate the function as
\begin{align*}
    c^{(2)}(\boldsymbol{\lambda}) =& \frac{c_{(1)}\lambda_1(\theta+\lambda_1)+c_{(2)}\lambda_2(\theta+\lambda_2)+2c_{(1,2)}\lambda_1\lambda_2}{\theta+2\lambda_1+2\lambda_2} \\
    = &c_{(1)}\lambda_1+c_{(2)}\lambda_2 - \frac{c_{(1)}\lambda_1^2+c_{(2)}\lambda_2^2+2[c_{(1)}+c_{(2)}-c_{(1,2)}]\lambda_1\lambda_2}{\theta+2\lambda_1+2\lambda_2} \\
    = & c_{(1)}\lambda_1+c_{(2)}\lambda_2 - [c_{(1,2)}-c_{(2)}]\frac{\lambda_1^2}{\theta+2\lambda_1+2\lambda_2}-[c_{(1,2)}-c_{(1)}]\frac{\lambda_2^2}{\theta+2\lambda_1+2\lambda_2}\\
    & - [c_{(1)}+c_{(2)}-c_{(1,2)}]\frac{(\lambda_1+\lambda_2)^2}{\theta+2\lambda_1+2\lambda_2}.
\end{align*}
Again, according to \Cref{lemma:convex}, we can prove that $\lambda_1^2/(\theta+2\lambda_1+2\lambda_2)$, $\lambda_2^2/(\theta+2\lambda_1+2\lambda_2)$, and $(\lambda_1+\lambda_2)^2/(\theta+2\lambda_1+2\lambda_2)$ are all convex, by choosing $\boldsymbol{\beta} = (2,2)^T$, $\boldsymbol{\alpha} = (1,0)^T, (0,1)^T$ and $(1,1)^T$ respectively. Since $c_{(1,2)} \geq c_{(1)}$, $c_{(1,2)} \geq c_{(2)}$, and $c_{(1,2)} < c_{(1)} + c_{(2)}$, thus $c^{(2)}(\boldsymbol{\lambda})$ can also be expressed as a linear function minus convex functions, and is therefore concave.

Since two types of optimal solutions given in \Cref{lemma:n2-solution-same-theta} remain feasible for any $\lambda_1,\lambda_2 \geq 0$, we can re-write the optimal cost function $c(\boldsymbol{\lambda})$ as the minimization of two functions:
\begin{align*}
c(\boldsymbol{\lambda}) & = \min \{ c^{(1)}(\boldsymbol{\lambda}), c^{(2)}(\boldsymbol{\lambda}) \}.
\end{align*}
Given that the minimization preserves concavity, $c(\boldsymbol{\lambda})$ is concave. 
\hfill $\square$
\end{proof}

\subsection{Proof of \Cref{prop:n2-tight}}

When $N=1$, according to \Cref{lemma:n1-solution} and \Cref{prop:n1-concave}, the optimal solution to the LP model is given by $y_1^*=\lambda_1\theta /(\theta +2\lambda_1)$ and $x_{1,1}^* = \lambda_1^2/(\theta +2\lambda_1)$, and $c(\boldsymbol{\lambda})$ is concave. Now consider the following matching policy: when an agent arrives, if there is an agent waiting, match these two agents; otherwise, let this agent wait. It is easy to verify that this matching policy results in only 2 states for the system: state 0 means that no agent is waiting, and state 1 means that there is an agent waiting. This leads to the Markov chain shown in \Cref{fig:chain_1} \citep{yan2023pricing}.
\begin{figure}[ht]
\centering
    \subfloat[$N=1$]{\includegraphics[width=0.3\linewidth]{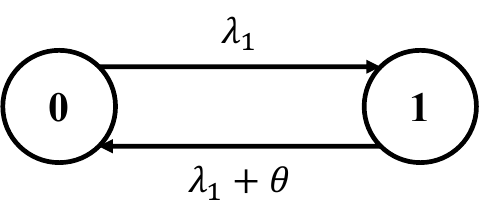} \label{fig:chain_1}}
 \subfloat[$N=2$]{\includegraphics[width=0.4\linewidth]{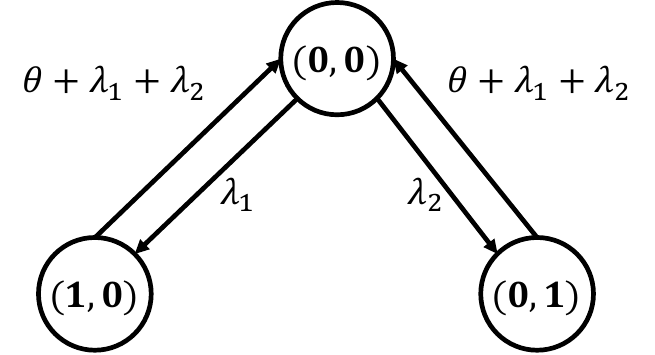} \label{fig:chain_2}}
 \caption{Markov chains when $N\leq 2$.}
\end{figure}

Solving this Markov chain leads to the steady-state distributions of $\pi_0 = (\theta +\lambda_1)/(\theta +2\lambda_1)$ and $\pi_1 = \lambda_1/(\theta +2\lambda_1)$. The rate at which agents get matched is thus $\pi_1 \lambda_1=x_{1,1}^*$, and the rate at which agents depart solo is $\pi_1 \theta  = y_1^*$. Therefore, $x_{1,1}^*$ and $y_1^*$ correspond to the steady-state flow under this matching policy. Combining with the property that the LP model gives a lower bound of the MDP cost, we can see that the LP relaxation is tight.

When $N=2$, according to \Cref{lemma:n2-solution-same-theta}, two possible solution structures can also be determined by checking the sign of $\Delta_1(\lambda_1,\lambda_2)$. When $\Delta_1(\lambda_1,\lambda_2)\leq0$, an optimal solution of LP satisfies $x_{1,2}^*=x_{2,1}^*=0$,  $y_i^*=\lambda_i\theta /(\theta +2\lambda_i)$, and $x_{i,i}^* = \lambda_i^2/(\theta +2\lambda_i)$ ($\forall i=1,2$).  We can implement a matching policy that never matches two types of agents together, and matches the same types of agents as the case of $N=1$. Therefore, the LP relaxation is tight in this case.

When $\Delta_1(\lambda_1,\lambda_2)>0$, consider the matching policy that matches two agents whenever possible, regardless of their types. Under this policy, there will be three possible states: $(0,0)$, $(1,0)$ and $(0,1)$, where the first and second indices denote the number of type 1 and type 2 agents waiting, respectively. This leads to the Markov chain shown in \Cref{fig:chain_2}.

Denote the steady-state distributions of this Markov chain as $\pi_{(0,0)}$, $\pi_{(1,0)}$, and $\pi_{(0,1)}$. The values of $\pi_{(0,0)}$, $\pi_{(1,0)}$, and $\pi_{(0,1)}$ can be uniquely solved using the following equations:
\begin{align*}
    \pi_{(0,0)} + \pi_{(1,0)} + \pi_{(0,1)} &= 1,\\
    (\theta +\lambda_1+\lambda_2) \pi_{(1,0)} &= \lambda_1  \pi_{(0,0)}, \\
    (\theta +\lambda_1+\lambda_2) \pi_{(0,1)} &= \lambda_2  \pi_{(0,0)}.
\end{align*}
The solution is given by $\pi_{(1,0)}=\lambda_1/(\theta + 2\lambda_1 + 2\lambda_2) $, $\pi_{(0,1)}=\lambda_2/(\theta + 2\lambda_1 + 2\lambda_2) $, and  $\pi_{(0,0)}=(\theta+\lambda_1+\lambda_2)/(\theta + 2\lambda_1 + 2\lambda_2) $. We then have $x_{1,j}^*=\lambda_jy_1^*/\theta =\lambda_j \pi_{(1,0)}$ and $x_{2,j}^*=\lambda_jy_2^*/\theta =\lambda_j \pi_{(0,1)}$ for $j=1,2$. Therefore, $\boldsymbol{x}^*$ and $\boldsymbol{y}^*$ correspond to the steady-state flow under this matching policy, and thus the fluid relaxation is tight. $\hfill\square$

\subsection{Proofs of Theorems \ref{thm:n-weak-concave-same-theta}, \ref{thm:n-weak-concave} and Propositions \ref{prop:n-concave-same-theta}, \ref{prop:n-weak-concave-same-theta-upper-bound}, \ref{prop:n-concave}, \ref{prop:n-weak-concave-upper-bound}, \ref{prop:n-weak-concave-same-theta-both-bound}, and \ref{prop:n-weak-concave-both-bound} } \label{appendix:thm-n-concave}

Our proof relies on the following two lemmas characterizing the optimal solution structure. 

\begin{lemma} \label{lemma:n-concave-1}
    $\forall i,j \in [N]$, if
    $$
    c_{(i)}\frac{\theta_i+\lambda_i}{\theta_i+2\lambda_i} + c_{(j)}\frac{\theta_j+\lambda_j}{\theta_j+2\lambda_j} - c_{(i,j)} < 0,
    $$
    the optimal solution must have $x_{i,j}^*=x_{j,i}^*=0$.
\end{lemma}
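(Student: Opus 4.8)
The plan is to prove this by contradiction, using the same local flow-rerouting perturbation that underlies the $\Delta_1\le 0$ case of Lemma~\ref{lemma:n2-cross-binding}, now applied to a single ordered pair $(i,j)$ inside the $N$-type program~\eqref{equ:cn}. First observe that the claim is vacuous when $i=j$: the hypothesis would then read $2c_{(i)}(\theta_i+\lambda_i)/(\theta_i+2\lambda_i)-c_{(i,i)}<0$, i.e. $\theta_i<0$ (using $c_{(i,i)}=c_{(i)}$ from Assumption~\ref{assum:cost}), which is impossible. So assume $i\neq j$ and suppose, for contradiction, that some optimal solution $(\boldsymbol{x}^*,\boldsymbol{y}^*)$ has $x_{i,j}^*>0$ (the case $x_{j,i}^*>0$ will follow by the symmetric argument with the ordered pair $(j,i)$).

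For small $\varepsilon\in(0,x_{i,j}^*)$, I would define $(\hat{\boldsymbol{x}},\hat{\boldsymbol{y}})$ to equal $(\boldsymbol{x}^*,\boldsymbol{y}^*)$ except that $\hat x_{i,j}=x_{i,j}^*-\varepsilon$ and, for $k\in\{i,j\}$, $\hat x_{k,k}=x_{k,k}^*+\lambda_k\varepsilon/(\theta_k+2\lambda_k)$ and $\hat y_k=y_k^*+\theta_k\varepsilon/(\theta_k+2\lambda_k)$. Since $2\lambda_k/(\theta_k+2\lambda_k)+\theta_k/(\theta_k+2\lambda_k)=1$, the net change in the balance equations~\eqref{equ:cn-flow} for types $i$ and $j$ cancels to zero, and no other balance equation involves the modified variables, so~\eqref{equ:cn-flow} still holds. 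For the capacity constraints~\eqref{equ:cn-bound}: the constraint $\theta_i x_{i,j}\le\lambda_j y_i$ now has a strictly smaller left-hand side and a weakly larger right-hand side; the constraints $\theta_k x_{k,k}\le\lambda_k y_k$ for $k\in\{i,j\}$ hold because $\theta_k\hat x_{k,k}=\theta_k x_{k,k}^*+\theta_k\lambda_k\varepsilon/(\theta_k+2\lambda_k)\le\lambda_k y_k^*+\lambda_k\theta_k\varepsilon/(\theta_k+2\lambda_k)=\lambda_k\hat y_k$; and every other constraint in~\eqref{equ:cn-bound} keeps its left-hand side while its right-hand side $\lambda_b\hat y_a$ only weakly increases, since each $\hat y_a\ge y_a^*$. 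Nonnegativity~\eqref{equ:cn-nonneg-x}--\eqref{equ:cn-nonneg-y} is preserved because $\varepsilon<x_{i,j}^*$ and all other modified entries only increase. Hence $(\hat{\boldsymbol{x}},\hat{\boldsymbol{y}})$ is feasible.

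The objective~\eqref{equ:cn-obj} changes by $\varepsilon\bigl[c_{(i)}(\theta_i+\lambda_i)/(\theta_i+2\lambda_i)+c_{(j)}(\theta_j+\lambda_j)/(\theta_j+2\lambda_j)-c_{(i,j)}\bigr]$, where I have used $c_{(k,k)}=c_{(k)}$, and this quantity is strictly negative by the hypothesis of the lemma. This contradicts the optimality of $(\boldsymbol{x}^*,\boldsymbol{y}^*)$, so $x_{i,j}^*=0$ in every optimal solution; running the identical argument with the ordered pair $(j,i)$ yields $x_{j,i}^*=0$ as well.

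The argument is largely mechanical, and the one point requiring care is the exhaustive verification that the rerouting respects all of the capacity constraints in~\eqref{equ:cn-bound}; this goes through precisely because the perturbation only \emph{raises} the unmatched rates $y_i,y_j$ (loosening every constraint in which they appear on the right) while only lowering the cross-flow $x_{i,j}$ and raising the self-flows $x_{i,i},x_{j,j}$ in lock-step with the corresponding $y$'s. Unlike Lemma~\ref{lemma:n2-cross-binding}, the \emph{strict} inequality in the hypothesis forces the cost to decrease strictly, which is exactly what upgrades the conclusion to ``$x_{i,j}^*=x_{j,i}^*=0$ for all optimal solutions'' rather than merely the existence of one such solution.
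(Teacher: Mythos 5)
Your proposal is correct and follows essentially the same argument as the paper: the identical perturbation that shifts $\varepsilon$ of cross-flow $x_{i,j}$ into self-flows $x_{i,i},x_{j,j}$ and unmatched rates $y_i,y_j$ in proportions $\lambda_k/(\theta_k+2\lambda_k)$ and $\theta_k/(\theta_k+2\lambda_k)$, yielding a strict cost decrease under the hypothesis. Your added observations (the $i=j$ case is vacuous, and the strict inequality is what forces the conclusion for \emph{all} optimal solutions) are accurate refinements of the same proof.
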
 
\begin{proof} {Proof.}
    This lemma is a direct generalization of the first part of \Cref{lemma:n2-cross-binding}. Assume by contradiction that there exist $i,j\in[N]$, such that either $x_{i,j}^*>0$ or $x_{j,i}^* > 0$ when $c_{(i)}(\theta_i+\lambda_i)/(\theta_i+2\lambda_i) + c_{(j)}(\theta_j+\lambda_j)/(\theta_j+2\lambda_j) - c_{(i,j)} < 0$. If $x_{i,j}^* > 0$, we obtain a new solution by letting
    \begin{align*}
        \hat{x}_{i,j} &= x_{i,j}^* - \varepsilon, \\
        \hat{y}_{i} &= y_{i}^* + \varepsilon \frac{\theta_i}{\theta_i+2\lambda_i}, \\
        \hat{x}_{i,i} &= x_{i,i}^* + \varepsilon \frac{\lambda_i}{\theta_i+2\lambda_i}, \\
        \hat{y}_{j} &= y_{j}^* + \varepsilon \frac{\theta_j}{\theta_j+2\lambda_j}, \\
        \hat{x}_{j,j} &= x_{j,j}^* + \varepsilon \frac{\lambda_j}{\theta_j+2\lambda_j},
    \end{align*}
    and keeping the remaining variables unchanged, where $\varepsilon > 0$ is a sufficiently small positive value. Similar to the approach in \Cref{lemma:n2-cross-binding}, we can verify this is still a feasible solution to \eqref{equ:cn}, since we guarantee that $2\hat{x}_{i,i}+\hat{x}_{i,j}+\hat{y}_i = 2x_{i,i}^* + x_{i,j}^* + y_i^*$ and $2\hat{x}_{j,j}+\hat{x}_{i,j}+\hat{y}_j = 2x_{j,j}^* + x_{i,j}^* + y_j^*$. Compared to the original solution, the difference in cost under this new feasible solution is given by
\begin{align*}
    & \left[c_{(i)}(\hat{x}_{i,i}+\hat{y}_i) + c_{(j)}(\hat{x}_{j,j}+\hat{y}_j) + c_{(i,j)} \hat{x}_{i,j} \right] - \left[ c_{(i)}(x_{i,i}^*+y_i^*) + c_{(j)}(x_{j,j}^*+y_j^*) + c_{(i,j)} x_{i,j}^* \right]\\
    = & \varepsilon\left[ c_{(i)}\frac{\theta_i+\lambda_i}{\theta_i+2\lambda_i} + c_{(j)}\frac{\theta_j+\lambda_j}{\theta_j+2\lambda_j} - c_{(i,j)}\right] \\
    < & 0.
\end{align*}
Therefore, if $x_{i,j}^*>0$, by decreasing the value of $x_{i,j}$, we can always obtain another feasible solution with lower cost, meaning that $x_{i,j}^*=0$ must be an optimal solution. Similarly, $x_{j,i}^*=0$. \hfill $\square$
\end{proof}

\begin{lemma} \label{lemma:n-concave-2}
    For $i\in[N]$, define $X_i = \{j~|~\theta_i x_{i,j}^* = \lambda_j y_i^*\}$. If
    $$
    \sum_{j\in X_i \setminus \{i\}} \lambda_j\frac{\theta_j}{\theta_j+2\lambda_j+\lambda_i}-\theta_i <0 ,
    $$
    we must have $i \in X_i$.
\end{lemma}

\begin{proof} {Proof.}
     This lemma is a generalization of the second statements in Lemmas \ref{lemma:n2-y1-nonneg} and \ref{lemma:n2-y2-nonneg}. Again, assume by contradiction that $i\notin X_i$ (i.e., $\theta_i x_{i,i}^* < \lambda_i y_i^*$), then we must have $y_i^*>0$. Now we try to give another feasible solution with a smaller total cost. Given a small value of $\varepsilon > 0$, let 
     \begin{align*}
        \hat{y}_j&=y_j^*+\varepsilon\lambda_j\frac{\theta_j}{\theta_j+2\lambda_j+\lambda_i}, &\forall j\in X_i \setminus \{i\},\\
        \hat{x}_{j,j}&=x_{j,j}^*+\varepsilon\lambda_j\frac{\lambda_j}{\theta_j+2\lambda_j+\lambda_i}, &\forall j\in X_i \setminus \{i\},\\
        \hat{x}_{j,i}&=x_{j,i}^*+\varepsilon\lambda_j\frac{\lambda_i}{\theta_j+2\lambda_j+\lambda_i}, &\forall j\in X_i \setminus \{i\},\\
        \hat{x}_{i,j}&=x_{i,j}^*-\varepsilon\lambda_j, &\forall j\in X_i \setminus \{i\},\\
        \hat{y}_i&=y_{i}^*-\varepsilon\theta_i,\\
        \hat{x}_{i,i}&=x_{i,i}^*+\frac{\varepsilon}{2}\left(\theta_i+\sum_{j\in X_i \setminus \{i\}} \lambda_j\frac{\theta_j+2\lambda_j}{\theta_j+2\lambda_j+\lambda_i}\right),
     \end{align*}
     and keep the remaining variables unchanged. Similar to Lemmas \ref{lemma:n2-y1-nonneg} and \ref{lemma:n2-y2-nonneg}, since $y_i^*>0$ and $x_{i,j}^*=\lambda_jy_i^*/\theta_i>0,\ \forall j\in X_i$, we can also guarantee that the new solution is still feasible. Then compared to the original solution, the difference in cost under this new feasible solution is given by
     \begin{align*}
    & \left[c_{(i)}(\hat{x}_{i,i}+\hat{y}_i) + \sum_{j\in X_i \setminus \{i\}}c_{(j)}(\hat{x}_{j,j}+\hat{y}_j) + \sum_{j\in X_i \setminus \{i\}}c_{(i,j)} (\hat{x}_{i,j}+\hat{x}_{j,i}) \right] \\
    & - \left[ c_{(i)}(x_{i,i}^*+y_i^*) + \sum_{j\in X_i \setminus \{i\}}c_{(j)}(x_{j,j}^*+y_j^*) + \sum_{j\in X_i \setminus \{i\}}c_{(i,j)} (x_{i,j}^*+x_{j,i}^*) \right]\\
    = & \varepsilon\left[ \frac{c_{(i)}}{2} \left( \sum_{j\in X_i \setminus \{i\}} \lambda_j\frac{\theta_j+2\lambda_j}{\theta_j+2\lambda_j+\lambda_i}-\theta_i \right)
    + \sum_{j\in X_i \setminus \{i\}} \left( c_{(j)}\lambda_j\frac{\theta_j+\lambda_j}{\theta_j+2\lambda_j+\lambda_i} - c_{(i,j)} \lambda_j\frac{\theta_j+2\lambda_j}{\theta_j+2\lambda_j+\lambda_i}\right)\right] \\
    \stackrel{(a)}{\leq} & \varepsilon\left[ \frac{c_{(i)}}{2} \left( \sum_{j\in X_i \setminus \{i\}} \lambda_j\frac{\theta_j+2\lambda_j}{\theta_j+2\lambda_j+\lambda_i}-\theta_i \right)
    - \sum_{j\in X_i \setminus \{i\}} \left( c_{(i,j)}\lambda_j\frac{\lambda_j}{\theta_j+2\lambda_j+\lambda_i} \right)\right] \\
    \stackrel{(b)}{\leq} & \varepsilon\left[ \frac{c_{(i)}}{2} \left( \sum_{j\in X_i \setminus \{i\}} \lambda_j\frac{\theta_j}{\theta_j+2\lambda_j+\lambda_i}-\theta_i \right)\right] \\
    < & 0,
    \end{align*}
    where (a) is due to $c_{(i,j)} \geq c_{(j)}$ and (b) is due to $c_{(i,j)} \geq c_{(i)}$. Therefore, we can obtain another feasible solution with a lower cost, leading to the contradiction. \hfill $\square$
\end{proof}

We prove \Cref{thm:n-weak-concave-same-theta,thm:n-weak-concave} and Propositions \ref{prop:n-weak-concave-same-theta-upper-bound}, \ref{prop:n-weak-concave-upper-bound}, \ref{prop:n-weak-concave-same-theta-both-bound}, and \ref{prop:n-weak-concave-both-bound} by establishing a general theorem that encompasses all cases.

\begin{theorem} \label{thm:n-weak-concave-general}
$c(\boldsymbol{\lambda})$ is weakly concave on $\Lambda$ under either of the following conditions:

(i) there exists an integer $K \geq 1$, such that $(K-2)\nu \leq 2$, $e_{(K)} \neq 0.5$ and 
$$
\underline{\lambda}_i > \theta_i \frac{e_{(K)}}{1-2e_{(K)}},\ \forall i \in [N];
$$

(ii) there exists an integer $K \geq 1$, such that $(K - 2)\nu  > 2$, $e_{(K)} \neq 0.5$ and 
$$
\theta_i \frac{e_{(K)}}{1-2e_{(K)}} < \underline{\lambda}_i < \overline{\lambda}_i < \theta_i \max\left\{\frac{1}{K-2}, \frac{1}{(K-2)\nu-2}\right\}\left(1+\frac{1}{\nu}\frac{e_{(K)}}{1-2e_{(K)}}\right),\ \forall i \in [N].
$$
\end{theorem}

\begin{proof}{Proof.}

(i) Consider the first case where $(K-2)\nu\leq2$ and $\underline{\lambda}_i > \theta_i e_{(K)}/(1-2e_{(K)}),\ \forall i \in [N]$. For any $i,j\in[N]\ (i\neq j)$, we have
\begin{align*}
    & c_{(i)} \frac{\theta_i+\lambda_i}{\theta_i+2\lambda_i} + c_{(j)} \frac{\theta_j+\lambda_j}{\theta_j+2\lambda_j} - c_{(i,j)} \\ =& c_{(i)} + c_{(j)} - c_{(i)} \frac{\lambda_i}{\theta_i+2\lambda_i} - c_{(j)} \frac{\lambda_j}{\theta_j+2\lambda_j}  - c_{(i,j)}\\
    \stackrel{(a)}{\leq} & c_{(i)} + c_{(j)} - c_{(i)} \frac{\underline{\lambda}_i}{\theta_i+2\underline{\lambda}_i} - c_{(j)} \frac{\underline{\lambda}_j}{\theta_j+2\underline{\lambda}_j}  - c_{(i,j)}\\
    \stackrel{(b)}{<} & \left(c_{(i)}+c_{(j)}\right) \left(1- e_{(K)}\right)  - c_{(i,j)}\\
    =& \left(c_{(i)}+c_{(j)}\right) \left(1- e_{(K)} - \frac{c_{(i,j)}}{c_{(i)}+c_{(j)}}\right)\\
    =& \left(c_{(i)}+c_{(j)}\right) \left(e_{i,j}- e_{(K)}\right),
\end{align*}
where (a) and (b) are because the function $\lambda_i/(\theta+2\lambda_i)$ is increasing in $\lambda_i > 0$. This means that, according \Cref{lemma:n-concave-1}, for a given type $i\in[N]$, the number of other demand types $j\neq i$ satisfying $x_{i,j}^*>0$ or $x_{j,i}^*>0$ is at most $K-2$.

Now, consider the following expression
$$
\sum_{j\in X_i \setminus \{i\}} \lambda_j\frac{\theta_j}{\theta_j+2\lambda_j+\lambda_i}-\theta_i \leq \theta_N \sum_{j\in X_i \setminus \{i\}} \frac{\lambda_j}{\theta_j+2\lambda_j+\lambda_i}-\theta_1 \stackrel{(a)}{<} \theta_1 \left[\frac{(K-2)\nu}{2}-1\right] \leq 0,
$$
where (a) is due to the size of the set $X_i \setminus \{i\}$ is at most $K-2$ and $\lambda_j/(\theta_j+2\lambda_j+\lambda_i) < 1/2$. Then based on \Cref{lemma:n-concave-2}, we have $i \in X_i$, that is $\theta_i x_{i,i}^*=\lambda_i y_i^*,\ \forall i\in[N]$.

Then we show that $y_i^*>0$ for all $i\in [N]$. Assume by contradiction that there exists $i \in [N]$, $y_i^*=0$. Based on \eqref{equ:cn-bound}, we have $x_{i,j}^*=0,\ \forall j\in[N]$. Then constraint \eqref{equ:cn-flow} for type $i$ leads to $\sum_{j \in [N], j\neq i} x_{j,i}^* = \lambda_i$. For demand type $j$, we have $y_j^* \geq \theta_j x_{j,i}^*/\lambda_i$, $x_{j,j}^* = \lambda_j y_j^*/\theta_j \geq \lambda_j x_{j,i}^*/\lambda_i$, and $x_{j,i}^*+2x_{j,j}^*+y_j^*\leq\lambda_j$, leading to
$$
x_{j,i}^* \leq 
\lambda_i\frac{\lambda_j}{\theta_j+2\lambda_j+\lambda_i} < \frac{\lambda_i}{2}.
$$
Note that since $K-2 \leq (K-2)\nu \leq 2$, there are at most 2 demand types $j \neq i$ such that $x_{j,i}^*>0$, then we have $\sum_{j \in [N], j\neq i} x_{j,i}^* < 2  \lambda_i/2 = \lambda_i$, which contradicts with constraint \eqref{equ:cn-flow}. 

Therefore, we must have $y_i^*>0$ for all $i\in [N]$. Based on \Cref{lemma:n-weak-concave} and \Cref{prop:n-weak-concave-nonzero}, $c(\boldsymbol{\lambda})$ is weakly concave on $\Lambda$.

(ii) Consider the second case where $(K-2)\nu>2$ and
$$
\theta_i \frac{e_{(K)}}{1-2e_{(K)}} < \underline{\lambda}_i < \overline{\lambda}_i < \theta_i \max\left\{\frac{1}{K-2}, \frac{1}{(K-2)\nu-2}\right\}\left(1+\frac{1}{\nu}\frac{e_{(K)}}{1-2e_{(K)}}\right),\ \forall i \in [N].
$$
From the lower bounds, we can get the similar result as the first case, that for a given type $i\in[N]$, the number of other demand types $j\in[N]$ satisfying $x_{i,j}^*>0$ or $x_{j,i}^*>0$ is at most $K-2$.

For the upper bounds, we begin with the case where $1/(K-2) \leq 1/[(K-2)\nu-2]$, and thus
$$
\overline{\lambda}_i < \frac{\theta_i}{(K-2)\nu-2}\left(1+\frac{1}{\nu}\frac{e_{(K)}}{1-2e_{(K)}}\right),\ \forall i \in [N].
$$
For any demand types $i,j\in[N]$, we have
\begin{align*}
\frac{\lambda_j}{\theta_j+2\lambda_j+\lambda_i} & \stackrel{(a)}{\leq} \frac{\overline{\lambda}_j}{\theta_j+2\overline{\lambda}_j+\underline{\lambda}_i}  \stackrel{(b)}{<} \frac{\frac{\theta_j}{(K-2)\nu-2}\left(1+\frac{1}{\nu}\frac{e_{(K)}}{1-2e_{(K)}}\right)}{\theta_j+2\frac{\theta_j}{(K-2)\nu-2}\left(1+\frac{1}{\nu}\frac{e_{(K)}}{1-2e_{(K)}}\right)+\theta_i \frac{e_{(K)}}{1-2e_{(K)}}} \\
& = \frac{\frac{1}{(K-2)\nu-2}\left(1+\frac{1}{\nu}\frac{e_{(K)}}{1-2e_{(K)}}\right)}{1+2\frac{1}{(K-2)\nu-2}\left(1+\frac{1}{\nu}\frac{e_{(K)}}{1-2e_{(K)}}\right)+ \frac{\theta_i}{\theta_j}\frac{e_{(K)}}{1-2e_{(K)}}} \\
& \leq \frac{\frac{1}{(K-2)\nu-2}\left(1+\frac{1}{\nu}\frac{e_{(K)}}{1-2e_{(K)}}\right)}{1+2\frac{1}{(K-2)\nu-2}\left(1+\frac{1}{\nu}\frac{e_{(K)}}{1-2e_{(K)}}\right)+ \frac{1}{\nu}\frac{e_{(K)}}{1-2e_{(K)}}} \\
& = \frac{1+\frac{1}{\nu}\frac{e_{(K)}}{1-2e_{(K)}}}{2\left(1+\frac{1}{\nu}\frac{e_{(K)}}{1-2e_{(K)}}\right)+ [(K-2)\nu-2] \left(1 + \frac{1}{\nu}\frac{e_{(K)}}{1-2e_{(K)}}\right)} \\
& = \frac{1}{(K-2)\nu},
\end{align*}
where (a) and (b) are due to $\lambda_j/(\theta_j+2\lambda_j+\lambda_i)$ is increasing in $\lambda_j > 0$. Following the idea of the first case, we have
$$
\sum_{j\in X_i \setminus \{i\}} \lambda_j\frac{\theta_j}{\theta_j+2\lambda_j+\lambda_i}-\theta_i \leq \theta_N \sum_{j\in X_i \setminus \{i\}} \frac{\lambda_j}{\theta_j+2\lambda_j+\lambda_i}-\theta_1 < \theta_1 \left[(K-2)\nu\frac{1}{(K-2)\nu}-1\right] \leq 0,
$$
then based on \Cref{lemma:n-concave-2}, we have $i \in X_i$, that is $\theta_i x_{i,i}^*=\lambda_i y_i^*,\ \forall i\in[N]$. 

Same as the first case, we then show that $y_i^*>0$ for all $i\in [N]$. Assume by contradiction that there exists $i \in [N]$, $y_i^*=0$ and $\sum_{j \in [N], j\neq i} x_{j,i}^* = \lambda_i$. For demand type $j$, 
$$
x_{j,i}^* \leq 
\lambda_i\frac{\lambda_j}{\theta_j+2\lambda_j+\lambda_i}.
$$
However, there are at most $K-2$ demand types $j \neq i$ such that $x_{j,i}^*>0$, then we have
$$
\sum_{j \in [N], j\neq i} x_{j,i}^* < \lambda_i \left[\sum_{j \in [N], j\neq i}  \frac{\lambda_j}{\theta_j+2\lambda_j+\lambda_i}\right] < \lambda_i (K-2) \frac{1}{(K-2)\nu} < \lambda_i,
$$which contradicts with constraint \eqref{equ:cn-flow}. Therefore, we must have $y_i^*>0$ for all $i\in [N]$. Based on \Cref{lemma:n-weak-concave} and \Cref{prop:n-weak-concave-nonzero}, $c(\boldsymbol{\lambda})$ is weakly concave on $\Lambda$.
 
For the case where $1/(K-2) > 1/[(K-2)\nu-2]$, we have
$$
\overline{\lambda}_i < \frac{\theta_i}{K-2}\left(1+\frac{1}{\nu}\frac{e_{(K)}}{1-2e_{(K)}}\right),\ \forall i \in [N].
$$
Again assume by contradiction that there exists $i \in [N]$, $y_i^*=0$ and $\sum_{j \in [N], j\neq i} x_{j,i}^* = \lambda_i$. For demand type $j$, we have $y_j^* \geq \theta_j x_{j,i}^*/\lambda_i$ and $x_{j,i}^*+y_j^*\leq\lambda_j$, leading to
$$
x_{j,i}^* \leq 
\lambda_i\frac{\lambda_j}{\theta_j+\lambda_i}.
$$
Then
\begin{align*}
\sum_{j \in [N], j\neq i} x_{j,i}^* & \leq \lambda_i \sum_{j \in [N], j\neq i} \frac{\lambda_j}{\theta_j+\lambda_i} < \lambda_i \sum_{j \in [N], j\neq i} \frac{\overline{\lambda}_j}{\theta_j+\underline{\lambda}_i}  \\
& < \lambda_i \sum_{j \in [N], j\neq i} \frac{\frac{\theta_j}{K-2}\left(1+\frac{1}{\nu}\frac{e_{(K)}}{1-2e_{(K)}}\right)}{\theta_j+\theta_i \frac{e_{(K)}}{1-2e_{(K)}}}\\
& = \lambda_i \sum_{j \in [N], j\neq i} \frac{\frac{1}{K-2}\left(1+\frac{1}{\nu}\frac{e_{(K)}}{1-2e_{(K)}}\right)}{1+\frac{\theta_i}{\theta_j} \frac{e_{(K)}}{1-2e_{(K)}}} \\
& \leq \lambda_i \sum_{j \in [N], j\neq i} \frac{\frac{1}{K-2}\left(1+\frac{1}{\nu}\frac{e_{(K)}}{1-2e_{(K)}}\right)}{1+\frac{1}{\nu} \frac{e_{(K)}}{1-2e_{(K)}}} \\
& \stackrel{(a)}{\leq} \lambda_i,
\end{align*}
where (a) is due to the fact that there are at most $K-2$ demand types $j \neq i$ such that $x_{j,i}^*>0$. This contradicts with $\sum_{j \in [N], j\neq i} x_{j,i}^* = \lambda_i$. Therefore, $y_i^*>0$, $i\in [N]$, and $c(\boldsymbol{\lambda})$ is weakly concave on $\Lambda$.

\hfill $\square$
\end{proof}

The specific results presented in the main text are derived from \Cref{thm:n-weak-concave-general} as follows. Regarding the general heterogeneous patience setting: \Cref{thm:n-weak-concave} corresponds to the first condition with parameters $K=3$ and $\nu \leq 2$, while \Cref{prop:n-weak-concave-both-bound} corresponds to the second condition. Furthermore, by setting $K=N+1$ in the second condition, we obtain \Cref{prop:n-weak-concave-upper-bound}. Finally, the results for the homogeneous case follow immediately by imposing $\nu=1$: \Cref{thm:n-weak-concave-same-theta} and \Cref{prop:n-weak-concave-same-theta-both-bound} are recovered from the general forms, and \Cref{prop:n-weak-concave-same-theta-upper-bound} is obtained from the second condition with $K=N+1$.

\medskip

Now we consider \Cref{prop:n-concave-same-theta} and \Cref{prop:n-concave}.

\begin{proof} {Proof of \Cref{prop:n-concave-same-theta}.}

Consider the case where $\theta_1=\theta_2=\cdots=\theta_N=\theta$ and
$
\underline{\lambda}_i > \theta e_{(3)}/(1-2e_{(3)}),\ \forall i \in [N]. 
$ Similar to the proof of \Cref{thm:n-weak-concave-general}, for any $i,j \in [N]\ (i\neq j)$, we have
$$
c_{(i)} \frac{\theta+\lambda_i}{\theta+2\lambda_i} + c_{(j)} \frac{\theta+\lambda_j}{\theta+2\lambda_j} - c_{(i,j)}  < \left(c_{(i)}+c_{(j)}\right) \left(e_{i,j}- e_{(3)}\right).
$$

Denote $$n(i) = {\arg\max}_{j\in[N],j\neq i} e_{i,j},$$ that is, $n(i)$ is the ``neighbor'' of demand type $i$ with the highest matching efficiency (excluding demand type $i$ itself). Then based on the definition of $e_{(3)}$, $e_{i,j} > e_{(3)} \geq \max\{e_{i,(3)},e_{j,(3)}\}\ (i\neq j)$ is possible only if $j=n(i)$ and $i=n(j)$. Then according to \Cref{lemma:n-concave-1}, for $i,j\in[N], i\neq j$, $x_{i,j}^*>0$ or $x_{j,i}^*>0$ is possible only if $j=n(i)$ and $i=n(j)$. and for all other pairs $(i,j)$, we must have $x_{i,j}^*=x_{j,i}^*=0$. This result allows us to reduce program \eqref{equ:cn} into the following form:
\begin{align*} 
c(\boldsymbol{\lambda}) = \min_{\boldsymbol{x},\boldsymbol{y}}\quad & \sum_{i,j\in[N]: j=n(i), i=n(j)} c_{(i,j)} x_{i,j} + \sum_{i\in[N]} c_{(i)} \left(y_i+x_{i,i}\right),\\
\text{s.t.} \quad & \sum_{j\in[N]: j=n(i), i=n(j)} \left(x_{i,j}+x_{j,i}\right)+x_{i,i} + y_i = \lambda_i, & \forall i \in [N],\\
& \eqref{equ:cn-bound}, \eqref{equ:cn-nonneg-x}, \eqref{equ:cn-nonneg-y}. \nonumber
\end{align*}

This reduced formulation is further decomposable, since all $N$ types of demands can be formed into groups of at most 2, and the matching only happens within the group. Specifically, when a pair $(i,j)$ satisfies $j=n(i)$ and $i=n(j)$, they might be matched together and
thus form a group of 2. The remaining demand types satisfy $i\neq n(n(i))$ and will not be matched with any other demand types. Let $c_{i}(\lambda_i)$ denote the optimal value function assuming only type-$i$ demand exists, then $c_{i}(\lambda_i)$ can be given by \Cref{lemma:n1-solution} and is concave. Let $c_{i,j}(\lambda_i, \lambda_j)$ denote the optimal value function assuming only type-$i$ and type-$j$ demands exist, then  $c_{i,j}(\lambda_i, \lambda_j)$ is concave and tight according to \Cref{prop:n2-concave-same-theta} and \Cref{prop:n2-tight}. By decomposition, we have
$$
c(\boldsymbol{\lambda}) = \sum_{i,j\in[N]: i<j,j=n(i), i=n(j)} c_{i,j}(\lambda_i, \lambda_j) + \sum_{i\in[N]: i\neq n(n(i))} c_{i}(\lambda_i),
$$
and is thus concave and tight. \hfill $\square$
\end{proof}

\begin{proof} {Proof of \Cref{prop:n-concave}.}

For this case, we have $
\underline{\lambda}_i > \theta_i e_{(2)}/(1-2e_{(2)}),\ \forall i \in [N].
$ Similar to the proof of \Cref{thm:n-weak-concave-general}, for any $i,j \in [N]\ (i\neq j)$, we have
$$
c_{(i)} \frac{\theta_i+\lambda_i}{\theta_i+2\lambda_i} + c_{(j)} \frac{\theta_j+\lambda_j}{\theta_j+2\lambda_j} - c_{(i,j)}  < \left(c_{(i)}+c_{(j)}\right) \left(e_{i,j}- e_{(2)}\right).
$$
This means that, according to \Cref{lemma:n-concave-1}, for any demand type $i\in[N]$, it can only be matched with itself. Thus the program \eqref{equ:cn} can be reduced into:
$$
c(\boldsymbol{\lambda}) = \sum_{i\in[N]} c_{i}(\lambda_i),
$$
which is concave and tight according to \Cref{prop:n1-concave} and \Cref{prop:n2-tight}. \hfill $\square$
\end{proof}

\subsection{Proof of \Cref{prop:n-concave-agg}}

We first prove that when $
\sum_{j\in\mathcal{S}(i)} \underline{\lambda}_j > \theta {\bar{e}}/(1-2\bar{e}),\ \forall i \in [N]$, then $x_{i,j}^* = 0$ holds if $j \notin \mathcal{S}(i)$. Assume by contradiction that $x_{i,j}^* > 0$, we can obtain a new solution by letting
    \begin{align*}
        \hat{x}_{i,j} &= x_{i,j}^* - \varepsilon, \\
        \hat{y}_{i} &= y_{i}^* + \varepsilon \frac{\theta}{\theta+2\sum_{k\in\mathcal{S}(i)}\lambda_k}, \\
        \hat{x}_{i,i} &= x_{i,i}^* + \varepsilon \frac{\sum_{k\in\mathcal{S}(i)}\lambda_k}{\theta+2\sum_{k\in\mathcal{S}(i)}\lambda_k}, \\
        \hat{y}_{j} &= y_{j}^* + \varepsilon \frac{\theta}{\theta+2\sum_{k\in\mathcal{S}(j)}\lambda_k}, \\
        \hat{x}_{j,j} &= x_{j,j}^* + \varepsilon \frac{\sum_{k\in\mathcal{S}(j)}\lambda_k}{\theta+2\sum_{k\in\mathcal{S}(j)}\lambda_k},
    \end{align*}
    and keeping the remaining variables unchanged, where $\varepsilon > 0$ is a sufficiently small positive value. Similar to the way in \Cref{lemma:n2-cross-binding}, we can verify this is still a feasible solution to \eqref{equ:cn-agg}. Compared to the original solution, the difference in cost under this new feasible solution is given by
\begin{align*}
    & \left[c_{(i)}(\hat{x}_{i,i}+\hat{y}_i) + c_{(j)}(\hat{x}_{j,j}+\hat{y}_j) + c_{(i,j)} \hat{x}_{i,j} \right] - \left[ c_{(i)}(x_{i,i}^*+y_i^*) + c_{(j)}(x_{j,j}^*+y_j^*) + c_{(i,j)} x_{i,j}^* \right]\\
    = & \varepsilon\left[ c_{(i)}\frac{\theta+\sum_{k\in\mathcal{S}(i)}\lambda_k}{\theta+2\sum_{k\in\mathcal{S}(i)}\lambda_k} + c_{(j)}\frac{\theta+\sum_{k\in\mathcal{S}(j)}\lambda_k}{\theta+2\sum_{k\in\mathcal{S}(j)}\lambda_k} - c_{(i,j)}\right] \\
    < & \varepsilon\left[ (c_{(i)} + c_{(j)}) (1-\bar{e})- c_{(i,j)}\right] = \varepsilon (c_{(i)} + c_{(j)}) \left[ e_{i,j}-\bar{e}\right]\\
    \stackrel{(a)}{\leq} & 0,
\end{align*}
where (a) is due to the condition that if $j\notin \mathcal{S}(i)$, then $e_{i,j} \leq \bar{e}$. Therefore, if $x_{i,j}^*>0$, by decreasing the value of $x_{i,j}$, we can always obtain another feasible solution with lower cost, meaning that $x_{i,j}^*=0$ must hold.

We can also prove that there exists an optimal solution such that $x_{i,j}^*=0$ holds if $j\in\mathcal{S}(i)$ ($i\neq j$). Again, assume by contradiction that $x_{i,j}^*>0$, and we obtain a new solution by letting
\begin{align*}
        \hat{x}_{i,j} &= x_{i,j}^* - \varepsilon, \\
        \hat{y}_{i} &= y_{i}^* - \varepsilon \frac{\theta}{\theta+2\sum_{k\in\mathcal{S}(i)}\lambda_k}, \\
        \hat{x}_{i,i} &= x_{i,i}^* + \varepsilon \frac{\theta+\sum_{k\in\mathcal{S}(i)}\lambda_k}{\theta+2\sum_{k\in\mathcal{S}(i)}\lambda_k}, \\
        \hat{y}_{j} &= y_{j}^* + \varepsilon \frac{\theta}{\theta+2\sum_{k\in\mathcal{S}(j)}\lambda_k}, \\
        \hat{x}_{j,j} &= x_{j,j}^* + \varepsilon \frac{\sum_{k\in\mathcal{S}(j)}\lambda_k}{\theta+2\sum_{k\in\mathcal{S}(j)}\lambda_k},
    \end{align*}
and keeping the remaining variables unchanged, where $\varepsilon > 0$ is a sufficiently small positive value. The feasibility of the new solution can also be easily verified. Compared to the original solution, the difference in cost under this new feasible solution is given by
\begin{align*}
    & \left[c_{(i)}(\hat{x}_{i,i}+\hat{y}_i) + c_{(j)}(\hat{x}_{j,j}+\hat{y}_j) + c_{(i,j)} \hat{x}_{i,j} \right] - \left[ c_{(i)}(x_{i,i}^*+y_i^*) + c_{(j)}(x_{j,j}^*+y_j^*) + c_{(i,j)} x_{i,j}^* \right]\\
    = & \varepsilon\left[ c_{(i)}\frac{\sum_{k\in\mathcal{S}(i)}\lambda_k}{\theta+2\sum_{k\in\mathcal{S}(i)}\lambda_k} + c_{(j)}\frac{\theta+\sum_{k\in\mathcal{S}(j)}\lambda_k}{\theta+2\sum_{k\in\mathcal{S}(j)}\lambda_k} - c_{(i,j)}\right] \\
    < & \varepsilon c_{(i,j)} \left[ \frac{\sum_{k\in\mathcal{S}(i)}\lambda_k}{\theta+2\sum_{k\in\mathcal{S}(i)}\lambda_k} + \frac{\theta+\sum_{k\in\mathcal{S}(j)}\lambda_k}{\theta+2\sum_{k\in\mathcal{S}(j)}\lambda_k} - 1\right] \\
    \stackrel{(a)}{=} &  0,
\end{align*}
where (a) is due to $\mathcal{S}(i) = \mathcal{S}(j)$. Therefore, we can always obtain another feasible solution with a lower cost, meaning that $x_{i,j}^*=0$ always holds.

These two results lead to the conclusion that all optimal solutions satisfy $x_{i,j}^*=0$ for any $i\neq j$. This allows us to simplify $c(\boldsymbol{\lambda}; \mathcal{S})$ to

\begin{align*} 
c(\boldsymbol{\lambda}; \mathcal{S}) = \min_{\boldsymbol{x},\boldsymbol{y}} \quad& \sum_{i\in[N]} c_{(i)} (y_i + x_{i,i})\\
\text{s.t.} \quad 
& \theta x_{i,i} \leq y_i \sum_{j \in S(i)} \lambda_j , & \forall i \in [N], \\
& 2 x_{i,i} + y_i = \lambda_i, & \forall i \in [N], \\
& x_{i,i}, y_i \geq 0, & \forall i \in [N],
\end{align*}
of which the optimal solution can be trivially given by 
\begin{align*}
    x_{i,i}^* & = \lambda_i \frac{\sum_{j \in S(i)} \lambda_j}{\theta + 2\sum_{j \in S(i)} \lambda_j}, & \forall i \in [N], \\
    y_i^* & = \lambda_i \frac{\theta}{\theta + 2\sum_{j \in S(i)} \lambda_j}, & \forall i \in [N].
\end{align*}

Therefore,
$$
c(\boldsymbol{\lambda}; \mathcal{S}) = \sum_{i\in[N]} c_{(i)} \lambda_i \frac{\theta + \sum_{j \in S(i)} \lambda_j}{\theta + 2\sum_{j \in S(i)} \lambda_j}
$$ is weakly concave, given its smoothness as well as the boundedness of its Hessian matrix (see \Cref{lemma:n-weak-concave-3}).  \hfill $\square$

\subsection{Proof of \Cref{prop:n2-concave}}

According to \Cref{lemma:n2-solution}, $y_1^*>0$ is guaranteed, and $y_2^*>0$ if and only if $\Delta_2(\lambda_1,\lambda_2) < 0$ or $\Delta_3(\lambda_1,\lambda_2) < 0$ holds for all $(\lambda_1,\lambda_2) \in \Lambda$. Define 
the set $\mathcal{D} := \{(\lambda_1,\lambda_2) |\Delta_2(\lambda_1,\lambda_2) \geq 0, \Delta_3(\lambda_1,\lambda_2) \geq 0, \lambda_1,\lambda_2>0 \}$. If $\Lambda \cap \mathcal{D} = \emptyset$, then $\boldsymbol{y}^*>\boldsymbol{0}$ always hold for $\boldsymbol{\lambda}\in\Lambda$. According to \Cref{lemma:n-weak-concave} and \Cref{prop:n-weak-concave-nonzero}, $c(\boldsymbol{\lambda})$ is then weakly concave on $\Lambda$.

Firstly, it is trivial to verify that the last condition leads to the weak concavity. When $\underline{\lambda}_2 > \overline{\lambda}_1 - \theta_1$, $\Delta_3(\lambda_1,\lambda_2)<0$ for all $\boldsymbol{\lambda}\in\Lambda$, thus $\Lambda \cap \mathcal{D} = \emptyset$. Now we prove that the first two conditions also lead to weak concavity. For the set $\mathcal{D}$, note that 
\begin{align*}
    &\Delta_2(\lambda_1,\lambda_2) \geq 0 \\
    \Leftrightarrow\ & \frac{c_{(1)}}{2}\left[1 - \frac{\theta_1(\theta_2+\lambda_1+2\lambda_2)}{\lambda_2(\theta_2+2\lambda_2)}\right] +c_{(2)}\frac{\theta_2+\lambda_2}{\theta_2+2\lambda_2} - c_{(1,2)}  \geq 0 \\
    \Leftrightarrow\ & c_{(1)} \left[\lambda_2(\theta_2+2\lambda_2)- \theta_1(\theta_2+\lambda_1+2\lambda_2)\right] + 2c_{(2)}\lambda_2(\theta_2+\lambda_2) - 2c_{(1,2)}\lambda_2(\theta_2+2\lambda_2) \geq 0\\
    \Leftrightarrow\ & \lambda_1 \leq -2\frac{2c_{(1,2)}-c_{(1)}-c_{(2)}}{c_{(1)}\theta_1} \lambda_2^2 + \left( \frac{\tau_1}{c_{(1)}\theta_1}+1\right) \lambda_2 - \theta_2,
\end{align*}
where $\tau_1 = c_{(1)}(\theta_2-3\theta_1)+2c_{(2)}\theta_2-2c_{(1,2)}\theta_2$, and $\Delta_3(\lambda_1,\lambda_2) \geq 0$ is equivalent to $ \lambda_1 \geq \lambda_2+\theta_1$. Thus
\begin{align*}
    \mathcal{D}
    = \left\{(\lambda_1,\lambda_2) |\lambda_2+\theta_1 \leq \lambda_1 \leq -2\frac{2c_{(1,2)}-c_{(1)}-c_{(2)}}{c_{(1)}\theta_1} \lambda_2^2 + \left( \frac{\tau_1}{c_{(1)}\theta_1}+1\right) \lambda_2 - \theta_2, \lambda_1,\lambda_2>0 \right\}.
\end{align*}

According to \Cref{assum:cost}, $2c_{(1,2)}-c_{(1)}-c_{(2)} \geq 0$. Consider the case where $2c_{(1,2)}-c_{(1)}-c_{(2)} = 0$, then $\tau_2 = \tau_1^2 \geq 0$. If $\tau_1 \leq 0$ in this case, $\mathcal{D}$ is equivalent to
$$\left\{(\lambda_1,\lambda_2) |\lambda_2+\theta_1 \leq \lambda_1 \leq \left( \frac{\tau_1}{c_{(1)}\theta_1}+1\right) \lambda_2 - \theta_2, \lambda_1,\lambda_2>0 \right\},$$ which is an empty set because $\left( \tau_1/(c_{(1)}\theta_1)+1\right) \lambda_2 - \theta_2 \leq \lambda_2 - \theta_2 < \lambda_2 + \theta_1$ for any $\lambda_1, \lambda_2 > 0$. Thus $\Lambda \cap \mathcal{D} = \emptyset$ when $2c_{(1,2)}-c_{(1)}-c_{(2)} = 0$ and $\tau_1\leq0$.

For the remainder of the proof, we assume $2c_{(1,2)}-c_{(1)}-c_{(2)} > 0$. In this case, $\lambda_1$ in $\mathcal{D}$ is larger than a linear function of $\lambda_2$ and smaller than a quadratic function with a negative leading coefficient. Then a sufficient condition of $\Lambda \cap \mathcal{D} = \emptyset$ is for any $\lambda_2\in[\underline{\lambda}_2, \overline{\lambda}_{2}]$,
\begin{align*}
    & \lambda_2+\theta_1 > -2\frac{2c_{(1,2)}-c_{(1)}-c_{(2)}}{c_{(1)}\theta_1} \lambda_2^2 + \left( \frac{\tau_1}{c_{(1)}\theta_1}+1\right) \lambda_2 - \theta_2\\
    \Leftrightarrow & - \lambda_2^2 + \frac{\tau_1}{2(2c_{(1,2)}-c_{(1)}-c_{(2)})} \lambda_2 - \frac{c_{(1)}\theta_1(\theta_1+\theta_2)}{2(2c_{(1,2)}-c_{(1)}-c_{(2)})}<0\\
    \Leftrightarrow & -\left[\lambda_2 - \frac{\tau_1}{4(2c_{(1,2)}-c_{(1)}-c_{(2)})}\right]^2+\frac{\tau_2}{16(2c_{(1,2)}-c_{(1)}-c_{(2)})^2} < 0.
\end{align*}

If $\tau_2 < 0$, the above inequality naturally holds for any $\lambda_2 \in R$. If $\tau_2 \geq 0$, the above inequality leads to
$$
\lambda_2 > \frac{\tau_1+\sqrt{\tau_2}}{4(2c_{(1,2)}-c_{(1)}-c_{(2)})},\ \text{or}\ \lambda_2 < \frac{\tau_1-\sqrt{\tau_2}}{4(2c_{(1,2)}-c_{(1)}-c_{(2)})}.
$$
When $\tau_1 \leq 0$, 
$$
\tau_1+\sqrt{\tau_2} = \tau_1 + \sqrt{\tau_1^2-8c_{(1)}(2c_{(1,2)}-c_{(1)}-c_{(2)})\theta_1(\theta_1+\theta_2)} \leq 0,
$$
meaning that the inequality always holds when $\lambda_2 > 0$. Otherwise, when $\tau_1 >0$ and  $\tau_2 \geq 0$, a sufficient condition to make the inequality hold is when
$$
\underline{\lambda}_2 > \frac{\tau_1+\sqrt{\tau_2}}{4(2c_{(1,2)}-c_{(1)}-c_{(2)})}.
$$
\hfill $\square$

\subsection{Proof of \Cref{corl:n2-concave}}

For the first case when $\theta_2 < 3\theta_1$, we have
$$
\tau_1 = c_{(1)}(\theta_2-3\theta_1)+2c_{(2)}\theta_2-2c_{(1,2)}\theta_2 < 2c_{(2)}\theta_2-2c_{(1,2)}\theta_2 \leq 0.
$$
Then, according to the first case of \Cref{prop:n2-concave}, $c(\boldsymbol{\lambda})$ is weakly concave.

In the second case, according to the definition of $e_{1,2}$, we have
\begin{align*}
\underline{\lambda}_2 > \frac{\theta_2}{4}\frac{1}{1-2e_{1,2}} &= \frac{\theta_2}{4}\frac{c_{(1)}+c_{(2)}}{2c_{(1,2)}-c_{(1)}-c_{(2)}} 
\stackrel{(a)}{\geq }\frac{2\theta_2\left(c_{(1)}+2c_{(2)}-2c_{(1,2)}\right)}{4\left(2c_{(1,2)}-c_{(1)}-c_{(2)}\right)}  > \frac{2\tau_1}{4\left(2c_{(1,2)}-c_{(1)}-c_{(2)}\right)}.
\end{align*}
For (a), $c_{(1)}+c_{(2)} \geq 2\left(c_{(1)}+2c_{(2)}-2c_{(1,2)}\right)$ because $c_{(1,2)} \geq c_{(1)}$ and $c_{(1,2)} \geq c_{(2)}$. If $\tau_1 \leq 0$ or $\tau_2 < 0$, the result holds according to the first case of \Cref{prop:n2-concave}. If $\tau_1 > 0$ and $\tau_2 \geq 0$, note that
$$
\sqrt{\tau_2} = \sqrt{\tau_1^2-8c_{(1)}(2c_{(1,2)}-c_{(1)}-c_{(2)})\theta_1(\theta_1+\theta_2)} < \sqrt{\tau_1^2} = \tau_1,
$$
and thus
$$
\underline{\lambda}_2 > \frac{2\tau_1}{4\left(2c_{(1,2)}-c_{(1)}-c_{(2)}\right)}>\frac{\tau_1+\sqrt{\tau_2}}{4\left(2c_{(1,2)}-c_{(1)}-c_{(2)}\right)}.
$$
According to the second case of \Cref{prop:n2-concave}, $c(\boldsymbol{\lambda})$ is weakly concave. 

For the third case, since $\overline{\lambda}_1 \leq \theta_1$, $\underline{\lambda}_2 > 0 \geq \overline{\lambda}_1 - \theta_1$ always holds and according to the third case of \Cref{prop:n2-concave}, $c(\boldsymbol{\lambda})$ is weakly concave. 
\hfill $\square$

    \section{Additional Computational Results}
\label{appx:additional_exp}
\subsection{Heterogeneous Patience Levels}\label{appendix:different_theta}

\begin{table}[ht] \centering \scriptsize
\caption{Numerical results of solving \eqref{equ:gn} when $\theta_i$ varies across $i$.} \label{tab:results-diff-theta}
\resizebox{\textwidth}{!}{
\begin{tabular}{lcllllllllll}
\toprule
\multicolumn{3}{c}{$c$ (\$/mile)} & \multicolumn{3}{c}{$0.7$} & \multicolumn{3}{c}{$0.9$} & \multicolumn{3}{c}{$1.1$} \\ \cmidrule(lr){4-6} \cmidrule(lr){7-9} \cmidrule(lr){10-12} 
\multicolumn{3}{c}{$(\underline{\theta}, \overline{\theta})$ (min$^{-1}$)} & $(1/5, 1/3)$ & $(1/3, 1)$ & $(1, 2)$ & $(1/5, 1/3)$ & $(1/3, 1)$ & $(1, 2)$ & $(1/5, 1/3)$ & $(1/3, 1)$ &  $(1, 2)$ \\ \hline
\multicolumn{1}{c}{\multirow{12}{*}{$N=100$}} & \multirow{4}{*}{\begin{tabular}[c]{@{}c@{}}Running\\ Time (sec)\end{tabular}} & PG ($\delta_{\text{PG}}=100$) & 10.7 & 10.8 & 10.6 & 10.1 & 10.0 & 10.6 & 52.2 & 32.2 & 46.9 \\
\multicolumn{1}{c}{} &  & PG ($\delta_{\text{PG}}=10$) & 9.4 & 9.6 & 10.8 & 8.1 & 8.7 & 8.7 & 22.3 & 24.3 & 14.3 \\
\multicolumn{1}{c}{} &  & PG ($\delta_{\text{PG}}=1$) & 18.4 & 8.6 & 22.9 & 18.9 & 19.8 & 12.0 & 18.7 & 29.9 & 18.4 \\
\multicolumn{1}{c}{} &  & MM & 1.7 & 1.7 & 1.7 & 2.0 & 2.1 & 2.3 & 4.3 & 5.0 & 5.3 \\ \cline{2-12} 
\multicolumn{1}{c}{} & \multirow{4}{*}{\begin{tabular}[c]{@{}c@{}}Num.\\ Iterations\end{tabular}} & PG ($\delta_{\text{PG}}=100$) & 30.7 & 32.0 & 31.7 & 29.0 & 31.0 & 31.7 & 158.7 & 101.0 & 133.0 \\
\multicolumn{1}{c}{} &  & PG ($\delta_{\text{PG}}=10$) & 27.0 & 28.0 & 31.3 & 23.3 & 25.3 & 25.7 & 66.7 & 74.0 & 42.0 \\
\multicolumn{1}{c}{} &  & PG ($\delta_{\text{PG}}=1$) & 54.0 & 25.3 & 69.3 & 56.7 & 59.3 & 35.7 & 56.3 & 92.3 & 52.3 \\
\multicolumn{1}{c}{} &  & MM & 4.0 & 4.0 & 4.0 & 5.0 & 5.0 & 6.0 & 12.0 & 14.3 & 14.0 \\ \cline{2-12} 
\multicolumn{1}{c}{} & \multirow{4}{*}{\begin{tabular}[c]{@{}c@{}}Objective\\ Function\end{tabular}} & PG ($\delta_{\text{PG}}=100$) & 62.28 & 52.29 & 42.08 & 37.18 & 26.76 & 17.33 & 11.76 & 6.20 & 2.60 \\
\multicolumn{1}{c}{} &  & PG ($\delta_{\text{PG}}=10$) & 62.28 & 52.29 & 42.08 & 37.20 & 26.76 & 17.33 & 11.23 & 6.64 & 2.64 \\
\multicolumn{1}{c}{} &  & PG ($\delta_{\text{PG}}=1$) & 62.19 & 51.97 & 41.96 & 37.12 & 26.67 & 17.17 & 15.40 & 8.29 & 2.96 \\
\multicolumn{1}{c}{} &  & MM & 62.28 & 52.29 & 42.08 & 37.19 & 26.76 & 17.33 & 17.23 & 8.78 & 3.17 \\ \hline
\multirow{12}{*}{$N=200$} & \multirow{4}{*}{\begin{tabular}[c]{@{}c@{}}Running\\ Time (sec)\end{tabular}} & PG ($\delta_{\text{PG}}=100$) & 38.3 & 44.6 & 51.1 & 42.6 & 50.8 & 61.4 & 191.5 & 64.0 & 106.0 \\
 &  & PG ($\delta_{\text{PG}}=10$) & 30.5 & 34.7 & 43.4 & 33.7 & 39.9 & 50.7 & 260.5 & 364.8 & 104.6 \\
 &  & PG ($\delta_{\text{PG}}=1$) & 27.5 & 33.5 & 46.3 & 28.3 & 30.8 & 45.2 & 146.1 & 81.3 & 51.4 \\
 &  & MM & 5.7 & 6.4 & 8.5 & 7.9 & 8.9 & 11.1 & 20.4 & 25.0 & 32.7 \\ \cline{2-12} 
 & \multirow{4}{*}{\begin{tabular}[c]{@{}c@{}}Num.\\ Iterations\end{tabular}} & PG ($\delta_{\text{PG}}=100$) & 33.0 & 33.0 & 31.0 & 32.7 & 34.0 & 34.0 & 131.3 & 38.3 & 49.7 \\
 &  & PG ($\delta_{\text{PG}}=10$) & 25.3 & 25.7 & 25.7 & 26.3 & 27.0 & 27.0 & 178.3 & 195.3 & 47.3 \\
 &  & PG ($\delta_{\text{PG}}=1$) & 23.3 & 26.0 & 27.3 & 23.0 & 20.7 & 23.3 & 102.7 & 43.0 & 22.7 \\
 &  & MM & 4.0 & 4.0 & 4.0 & 5.7 & 5.3 & 5.0 & 14.7 & 13.0 & 13.3 \\ \cline{2-12} 
 & \multirow{4}{*}{\begin{tabular}[c]{@{}c@{}}Objective\\ Function\end{tabular}} & PG ($\delta_{\text{PG}}=100$) & 57.76 & 48.83 & 40.35 & 31.96 & 23.49 & 16.25 & 10.70 & 6.76 & 2.79 \\
 &  & PG ($\delta_{\text{PG}}=10$) & 57.76 & 48.83 & 40.64 & 31.96 & 23.49 & 16.25 & 11.31 & 7.12 & 2.82 \\
 &  & PG ($\delta_{\text{PG}}=1$) & 57.63 & 48.70 & 40.51 & 31.81 & 23.25 & 16.09 & 12.74 & 7.09 & 2.65 \\
 &  & MM & 57.76 & 48.83 & 40.64 & 31.96 & 23.49 & 16.25 & 13.16 & 7.26 & 2.89 \\ \hline
\multirow{12}{*}{$N=1000$} & \multirow{4}{*}{\begin{tabular}[c]{@{}c@{}}Running\\ Time (sec)\end{tabular}} & PG ($\delta_{\text{PG}}=100$) & 1200 & 1200 & 1200 & 1200 & 1200 & 1200 & 1200 & 1200 & 1200 \\
 &  & PG ($\delta_{\text{PG}}=10$) & 1200 & 1200 & 1200 & 1200 & 1200 & 1200 & 1200 & 1200 & 1200 \\
 &  & PG ($\delta_{\text{PG}}=1$) & 1200 & 1200 & 1200 & 1200 & 1200 & 1200 & 1200 & 1200 & 1200 \\
 &  & MM & 408.5 & 436.3 & 386.1 & 457.4 & 399.1 & 435.2 & 710.0 & 611.1 & 796.1 \\ \cline{2-12} 
 & \multirow{4}{*}{\begin{tabular}[c]{@{}c@{}}Num.\\ Iterations\end{tabular}} & PG ($\delta_{\text{PG}}=100$) & 14.0 & 14.7 & 13.7 & 15.0 & 14.3 & 14.7 & 14.3 & 14.3 & 14.3 \\
 &  & PG ($\delta_{\text{PG}}=10$) & 15.0 & 14.0 & 14.3 & 15.0 & 14.3 & 14.0 & 14.0 & 14.3 & 14.0 \\
 &  & PG ($\delta_{\text{PG}}=1$) & 14.0 & 14.0 & 13.3 & 14.0 & 14.3 & 14.7 & 14.7 & 14.3 & 16.7 \\
 &  & MM & 4.0 & 4.3 & 4.0 & 4.3 & 4.3 & 4.7 & 8.0 & 7.7 & 12.0 \\ \cline{2-12} 
 & \multirow{4}{*}{\begin{tabular}[c]{@{}c@{}}Objective\\ Function\end{tabular}} & PG ($\delta_{\text{PG}}=100$) & -51.73 & -145.44 & -123.78 & -255.35 & -132.14 & -247.79 & -164.69 & -161.08 & -160.94 \\
 &  & PG ($\delta_{\text{PG}}=10$) & -101.18 & -23.62 & -59.97 & -168.13 & -89.41 & -39.86 & -149.86 & -99.09 & -60.82 \\
 &  & PG ($\delta_{\text{PG}}=1$) & 30.25 & 25.26 & -33.20 & -15.45 & -2.21 & -12.63 & -4.87 & -0.18 & 1.88 \\
 &  & MM & 54.97 & 47.99 & 40.82 & 29.48 & 22.74 & 16.19 & 11.90 & 6.70 & 2.50 \\
 \bottomrule
\end{tabular}
}
\end{table}

\Cref{tab:results-diff-theta} presents the experimental results for the setting where $\theta_i$ varies across demand types. For type-$i$ demands, $\theta_i$ is randomly drawn from a uniform distribution $U[\underline{\theta}, \overline{\theta}]$, with three parameter choices: $(\underline{\theta}, \overline{\theta}) \in \{(1/5, 1/3), (1/3, 1), (1, 2)\}$. The results exhibit a similar trend to that observed in \Cref{tab:results-same-theta}. When $N \leq 200$, compared to three PG variants with $\delta_{\text{PG}}\in\{100,10,1\}$, the MM algorithm reduces running time by 81.7\%, 83.5\% and 84.7\%; decreases the number of iterations by 85.6\%, 83.0\% and 88.5\%; and improves the objective value by 2.27\%, 1.97\% and 0.94\%, respectively. When $N=1000$, only MM converges within 20 minutes. Besides, the performance of PG is still sensitive to the step size: $\delta_{\text{PG}}=100$ and $\delta_{\text{PG}}=10$ perform better when $N \leq 200$ and $c\leq 0.9$, whereas $\delta_{\text{PG}}=1$ is preferable when $c = 1.1$ or $N=1000$.

\subsection{Exponential Demand Model} \label{appendix:exponential_demand}

In this section, we present additional experimental results under another demand model. For rider type $i\in[N]$, we assume that the demand $\lambda_i$ is exponential with respect to the per-mile price $p_i/\ell_{(i)}$:
$$
\lambda_i = \overline{\lambda}_i \exp\left(-\frac{p_i}{\ell_{(i)}}\right),\quad \forall i\in[N],
$$
and inversely,
$$
p_i(\lambda_i) = \ell_{(i)} \left(\ln \overline{\lambda}_i - \ln \lambda_i\right),\quad \forall i\in[N].
$$

Correspondingly, for the PG method, the gradient of $g(\boldsymbol{\lambda})$ now becomes
$$
    \nabla g(\boldsymbol{\lambda}) = \left[ \ell_{(i)} \left(\ln \overline{\lambda}_i - \ln \lambda_i-1\right) - \gamma_i^* - \sum_{j \in [N]} y_j^* \eta_{j,i}^* \right]_{i \in [N]}.
$$

For the MM method, we are not able to obtain a closed-form solution of $\boldsymbol{\lambda}^{(t+1)} \gets \arg\max_{\boldsymbol{\lambda} \in \Lambda} Q(\boldsymbol{\lambda} \mid \boldsymbol{\lambda}^{(t)})$ under the exponential demand model. Nevertheless, by leveraging the fact that $Q(\boldsymbol{\lambda} \mid \boldsymbol{\lambda}^{(t)})$ is concave, we can still use efficient root finding methods (e.g., bisection method) to solve
$$
\nabla Q(\boldsymbol{\lambda} \mid \boldsymbol{\lambda}^{(t)}) = \left[ \ell_{(i)} \left(\ln \overline{\lambda}_i - \ln \lambda_i-1\right) - \rho \lambda_i- \gamma_i^* - \sum_{j \in [N]} y_j^* \eta_{j,i}^* + \rho\lambda_i^{(t)}\right]_{i \in [N]} = \boldsymbol{0},
$$
thus obtaining the unique solution of $\boldsymbol{\lambda}^{(t+1)}$ at each iteration.

\begin{table}[ht] \centering \tiny 
\caption{Numerical results of solving \eqref{equ:gn} when $\theta_i$ varies across $i$, under exponential demand model.} \label{tab:results-diff-theta-exponential-demand}
\resizebox{\textwidth}{!}{
\begin{tabular}{lcllllllllll}
\toprule
\multicolumn{3}{c}{$c$ (\$/mile)} & \multicolumn{3}{c}{$0.7$} & \multicolumn{3}{c}{$0.9$} & \multicolumn{3}{c}{$1.1$} \\ \cmidrule(lr){4-6} \cmidrule(lr){7-9} \cmidrule(lr){10-12} 
\multicolumn{3}{c}{$(\underline{\theta}, \overline{\theta})$ (min$^{-1}$)} & $(1/5, 1/3)$ & $(1/3, 1)$ & $(1, 2)$ & $(1/5, 1/3)$ & $(1/3, 1)$ & $(1, 2)$ & $(1/5, 1/3)$ & $(1/3, 1)$ &  $(1, 2)$ \\ \hline
\multicolumn{1}{c}{\multirow{12}{*}{$N=100$}} & \multirow{4}{*}{\begin{tabular}[c]{@{}c@{}}Running\\ Time (sec)\end{tabular}} & PG ($\delta_{\text{PG}}=100$) & 12.5 & 12.0 & 11.9 & 11.0 & 10.9 & 11.4 & 11.4 & 11.8 & 12.3 \\
\multicolumn{1}{c}{} &  & PG ($\delta_{\text{PG}}=10$) & 9.4 & 9.9 & 10.1 & 8.9 & 9.0 & 9.7 & 9.9 & 9.6 & 10.7 \\
\multicolumn{1}{c}{} &  & PG ($\delta_{\text{PG}}=1$) & 40.9 & 42.8 & 41.7 & 32.8 & 32.2 & 16.7 & 48.9 & 8.4 & 9.4 \\
\multicolumn{1}{c}{} &  & MM & 1.4 & 1.8 & 1.8 & 1.8 & 1.8 & 1.8 & 1.7 & 1.7 & 1.7 \\ \cline{2-12} 
\multicolumn{1}{c}{} & \multirow{4}{*}{\begin{tabular}[c]{@{}c@{}}Num.\\ Iterations\end{tabular}} & PG ($\delta_{\text{PG}}=100$) & 38.3 & 36.3 & 36.0 & 33.3 & 33.0 & 34.3 & 34.7 & 35.3 & 37.3 \\
\multicolumn{1}{c}{} &  & PG ($\delta_{\text{PG}}=10$) & 27.7 & 30.0 & 30.3 & 27.0 & 26.3 & 29.0 & 29.3 & 29.0 & 32.0 \\
\multicolumn{1}{c}{} &  & PG ($\delta_{\text{PG}}=1$) & 126.3 & 133.0 & 128.0 & 101.3 & 99.3 & 50.3 & 151.7 & 24.7 & 28.3 \\
\multicolumn{1}{c}{} &  & MM & 3.0 & 4.0 & 4.0 & 4.0 & 4.0 & 4.0 & 4.0 & 4.0 & 4.0 \\ \cline{2-12} 
\multicolumn{1}{c}{} & \multirow{4}{*}{\begin{tabular}[c]{@{}c@{}}Objective\\ Function\end{tabular}} & PG ($\delta_{\text{PG}}=100$) & 176.40 & 167.78 & 159.08 & 154.92 & 144.93 & 135.25 & 135.54 & 124.60 & 114.44 \\
\multicolumn{1}{c}{} &  & PG ($\delta_{\text{PG}}=10$) & 176.40 & 167.78 & 159.08 & 154.92 & 144.92 & 135.25 & 135.54 & 124.61 & 114.44 \\
\multicolumn{1}{c}{} &  & PG ($\delta_{\text{PG}}=1$) & 176.30 & 167.68 & 159.01 & 154.83 & 144.84 & 135.21 & 135.46 & 124.54 & 114.37 \\
\multicolumn{1}{c}{} &  & MM & 176.40 & 167.78 & 159.08 & 154.93 & 144.93 & 135.25 & 135.54 & 124.61 & 114.44 \\ \hline
\multirow{12}{*}{$N=200$} & \multirow{4}{*}{\begin{tabular}[c]{@{}c@{}}Running\\ Time (sec)\end{tabular}} & PG ($\delta_{\text{PG}}=100$) & 44.3 & 50.2 & 60.5 & 47.8 & 53.0 & 65.0 & 48.9 & 58.4 & 72.5 \\
 &  & PG ($\delta_{\text{PG}}=10$) & 35.9 & 39.7 & 52.4 & 37.0 & 42.2 & 53.7 & 40.0 & 47.2 & 65.9 \\
 &  & PG ($\delta_{\text{PG}}=1$) & 229.3 & 269.4 & 365.5 & 99.2 & 45.6 & 46.1 & 31.8 & 39.1 & 57.6 \\
 &  & MM & 5.4 & 6.9 & 7.2 & 5.8 & 7.3 & 9.0 & 6.0 & 7.1 & 9.2 \\ \cline{2-12} 
 & \multirow{4}{*}{\begin{tabular}[c]{@{}c@{}}Num.\\ Iterations\end{tabular}} & PG ($\delta_{\text{PG}}=100$) & 35.7 & 36.3 & 36.7 & 37.7 & 37.0 & 37.3 & 37.3 & 40.0 & 41.0 \\
 &  & PG ($\delta_{\text{PG}}=10$) & 29.7 & 29.3 & 31.3 & 29.7 & 30.0 & 31.0 & 31.3 & 32.7 & 37.0 \\
 &  & PG ($\delta_{\text{PG}}=1$) & 206.3 & 205.0 & 208.0 & 86.0 & 33.7 & 26.3 & 25.7 & 27.7 & 32.0 \\
 &  & MM & 3.7 & 4.0 & 3.3 & 4.0 & 4.3 & 4.0 & 4.0 & 4.0 & 4.0 \\ \cline{2-12} 
 & \multirow{4}{*}{\begin{tabular}[c]{@{}c@{}}Objective\\ Function\end{tabular}} & PG ($\delta_{\text{PG}}=100$) & 176.59 & 168.92 & 161.80 & 153.77 & 145.16 & 137.28 & 133.43 & 124.29 & 115.99 \\
 &  & PG ($\delta_{\text{PG}}=10$) & 176.58 & 168.91 & 161.79 & 153.77 & 145.16 & 137.28 & 133.44 & 124.29 & 115.99 \\
 &  & PG ($\delta_{\text{PG}}=1$) & 176.34 & 168.65 & 161.53 & 153.39 & 145.04 & 137.20 & 133.36 & 124.22 & 115.92 \\
 &  & MM & 176.59 & 168.92 & 161.80 & 153.77 & 145.16 & 137.28 & 133.44 & 124.29 & 116.00 \\ \hline
\multirow{12}{*}{$N=1000$} & \multirow{4}{*}{\begin{tabular}[c]{@{}c@{}}Running\\ Time (sec)\end{tabular}} & PG ($\delta_{\text{PG}}=100$) & 1200 & 1200 & 1200 & 1200 & 1200 & 1200 & 1200 & 1200 & 1200 \\
 &  & PG ($\delta_{\text{PG}}=10$) & 1200 & 1200 & 1200 & 1200 & 1200 & 1200 & 1200 & 1200 & 1200 \\
 &  & PG ($\delta_{\text{PG}}=1$) & 1200 & 1200 & 1200 & 1200 & 1200 & 1200 & 1200 & 1200 & 1200 \\
 &  & MM & 335.1 & 319.6 & 312.5 & 336.4 & 344.1 & 379.4 & 390.2 & 404.6 & 417.3 \\ \cline{2-12} 
 & \multirow{4}{*}{\begin{tabular}[c]{@{}c@{}}Num.\\ Iterations\end{tabular}} & PG ($\delta_{\text{PG}}=100$) & 15.0 & 14.0 & 13.3 & 14.7 & 14.3 & 14.3 & 15.0 & 14.3 & 14.3 \\
 &  & PG ($\delta_{\text{PG}}=10$) & 14.7 & 14.0 & 13.7 & 15.0 & 14.0 & 14.0 & 15.0 & 14.3 & 13.7 \\
 &  & PG ($\delta_{\text{PG}}=1$) & 14.3 & 14.0 & 13.7 & 14.7 & 14.0 & 13.7 & 15.0 & 14.3 & 14.0 \\
 &  & MM & 3.0 & 3.0 & 3.0 & 3.0 & 3.3 & 3.7 & 3.7 & 4.0 & 4.3 \\ \cline{2-12} 
 & \multirow{4}{*}{\begin{tabular}[c]{@{}c@{}}Objective\\ Function\end{tabular}} & PG ($\delta_{\text{PG}}=100$) & -177.75 & -39.25 & -199.40 & -195.41 & -126.87 & -175.56 & -377.55 & -165.09 & -205.60 \\
 &  & PG ($\delta_{\text{PG}}=10$) & -49.77 & 27.66 & -38.03 & -162.10 & 10.93 & 0.96 & -244.69 & -92.17 & -126.78 \\
 &  & PG ($\delta_{\text{PG}}=1$) & 99.93 & 105.72 & 40.19 & 38.91 & 60.40 & 0.60 & -96.88 & -20.30 & 23.39 \\
 &  & MM & 178.14 & 171.93 & 165.57 & 154.56 & 147.48 & 140.34 & 133.73 & 126.11 & 118.56\\
\bottomrule
\end{tabular}
}
\end{table}

The experimental results under the exponential demand model are presented in \Cref{tab:results-diff-theta-exponential-demand}, where we assume $\theta_i$ varies across $i$.
Although the exponential demand model requires an additional root-finding step rather than using a closed-form solution for $\boldsymbol{\lambda}^{(t)}$, the MM method still significantly outperforms the PG method in both running time and number of iterations. These results demonstrate that our main findings are robust to the choice of demand model.

\subsection{Simulation} \label{appendix:simulation}

We also conduct a simulation study to validate our pricing results in a dynamic stochastic matching environment. In the simulation, rider arrivals of each type follow independent Poisson processes, and their sojourn times are sampled from exponential distributions. All settings remain consistent with those in \Cref{subsec:chicago}.

We compare two price plans, by considering the LP model \eqref{equ:cn} with and without constraints \eqref{equ:cn-flow}, respectively. The first price plan serves as a baseline and is derived by solving~\eqref{equ:gn} with the constraint~\eqref{equ:cn-bound} relaxed in~\eqref{equ:cn}. This corresponds to assuming demands have infinite patience (i.e., $\theta_i=0$, $\forall i\in[N]$). As shown in \Cref{prop:theta_zero}, under this assumption, $c(\boldsymbol{\lambda})$ becomes linear, and maximizing $g(\boldsymbol{\lambda})$ becomes a convex optimization problem that can be solved directly using the first-order condition. The second price plan (i.e., our prices) considers the full LP program \eqref{equ:cn} with constraints \eqref{equ:cn-flow}. These prices are obtained from the MM algorithm (\Cref{alg:mm}). %
By comparing the two price plans, we can highlight the value of incorporating demands' finite patience into the LP formulation~\eqref{equ:cn}.

For the matching policy, we implement a dual-based approach, similar to the hindsight dual policy in Section 4.1 of \citet{ma2025potential}. For our pricing plan, after obtaining pricing solutions using the MM algorithm, we re-solve \eqref{equ:cn} to extract the optimal dual variables $\boldsymbol{\gamma}^*=(\gamma_i^*)_{i\in[N]}$ associated with constraints \eqref{equ:cn-flow}. Similarly, for the baseline pricing plan, after obtaining pricing solutions, we re-solve \eqref{equ:cn} without constraints \eqref{equ:cn-bound} to extract $\boldsymbol{\gamma}^*$. When a new rider of type $i\in[N]$ arrives, we attempt to match her with an available type-$j$ rider such that $c\ell_{(i,j)}-\gamma_i^*-\gamma_j^* \leq 0$ and this value is minimized among all available $j\in[N]$. Ties are broken in favor of the earliest-arrived rider. If no such match exists, the new rider is placed in the waiting queue.

\begin{table}[t] \centering \scriptsize
\caption{Average profit (\$/min) obtained over 150 simulation runs, when $\theta_1=\cdots=\theta_N=\theta$.} \label{tab:simulation-same-theta}
\resizebox{\textwidth}{!}{
\begin{tabular}{clllllllllllll}
\toprule
\multicolumn{2}{c}{$c$ (\$/mile)}                      & \multicolumn{4}{c}{$0.7$}     & \multicolumn{4}{c}{$0.9$}        & \multicolumn{4}{c}{$1.1$}              \\ \cmidrule(lr){3-6} \cmidrule(lr){7-10} \cmidrule(lr){11-14}  
\multicolumn{2}{c}{$\theta$ (min$^{-1}$)}              & $1/5$ & $1/3$ & $1$   & $2$   & $1/5$ & $1/3$ & $1$   & $2$      & $1/5$ & $1/3$    & $1$      & $2$      \\ \hline
\multirow{3}{*}{$N=100$} & LP w/o \eqref{equ:cn-bound} & 64.31 & 58.80 & 42.21 & 30.14 & 39.14 & 32.63 & 13.96 & 0.90 & 18.24 & 11.22 & -8.14 & -20.89 \\
 & LP w/ \eqref{equ:cn-bound} & 64.55 & 59.37 & 44.57 & 35.00 & 39.67 & 33.89 & 18.95 & 10.82 & 19.52 & 13.87 & 3.80 & 0.23 \\
 & Improvement (\%) & 0.4 & 1.0 & 5.6 & 16.1 & 1.3 & 3.9 & 35.7 & 1095.4 & 7.0 & 23.6 & $\infty$ & $\infty$ \\ \hline
\multirow{3}{*}{$N=200$} & LP w/o \eqref{equ:cn-bound} & 57.56 & 50.43 & 32.65 & 22.06 & 30.33 & 22.24 & 2.94 & -8.14 & 8.25 & -0.12 & -19.08 & -29.57 \\
 & LP w/ \eqref{equ:cn-bound} & 58.77 & 52.30 & 37.92 & 30.42 & 32.47 & 25.93 & 13.01 & 7.53 & 12.89 & 8.48 & 1.95 & -0.38 \\
 & Improvement (\%) & 2.1 & 3.7 & 16.1 & 37.9 & 7.0 & 16.6 & 341.8 & $\infty$ & 56.2 & $\infty$ & $\infty$ & $\infty$\\ \hline
 \multirow{3}{*}{$N=1000$} & LP w/o \eqref{equ:cn-bound} & 42.08 & 35.99 & 23.28 & 16.03 & 12.66 & 6.05 & -7.52 & -15.16 & -9.47 & -16.04 & -29.27 & -36.67 \\
 & LP w/ \eqref{equ:cn-bound} & 48.40 & 43.17 & 33.13 & 27.95 & 22.99 & 18.16 & 9.99 & 6.10 & 7.89 & 5.07 & 0.74 & -1.41 \\
 & Improvement (\%) & 15.0 & 19.9 & 42.3 & 74.4 & 81.5 & 200.3 & $\infty$ & $\infty$ & $\infty$ & $\infty$ & $\infty$ & $\infty$ \\
                         \bottomrule
\end{tabular}
}
\end{table}

\begin{table}[t] \centering \scriptsize
\caption{Average profit (\$/min) obtained over 150 simulation runs, when $\theta_i$ varies across $i$.} \label{tab:simulation-diff-theta}
\begin{tabular}{cllllllllll} \toprule
\multicolumn{2}{c}{$c$ (\$/mile)}                      & \multicolumn{3}{c}{$0.7$} & \multicolumn{3}{c}{$0.9$} & \multicolumn{3}{c}{$1.1$} \\ \cmidrule(lr){3-5} \cmidrule(lr){6-8} \cmidrule(lr){9-11}  
\multicolumn{2}{c}{$\theta$ (min$^{-1}$)} & $(1/5,1/3)$ & $(1/3,1)$ & $(1,2)$ & $(1/5,1/3)$ & $(1/3,1)$ & $(1,2)$ & $(1/5,1/3)$ & $(1/3,1)$ & $(1,2)$ \\ \hline
\multirow{3}{*}{$N=100$} & LP w/o \eqref{equ:cn-bound} & 61.62 & 49.81 & 35.78 & 35.98 & 22.45 & 6.99 & 14.78 & 0.51 & -14.94 \\
 & LP w/ \eqref{equ:cn-bound} & 62.04 & 51.19 & 39.40 & 36.82 & 25.45 & 14.48 & 16.60 & 7.72 & 1.67 \\
 & Improvement (\%) & 0.7 & 2.8 & 10.1 & 2.3 & 13.4 & 107.1 & 12.3 & 1419.0 & $\infty$ \\ \hline
\multirow{3}{*}{$N=200$} & LP w/o \eqref{equ:cn-bound} & 53.96 & 40.58 & 26.91 & 26.23 & 11.44 & -3.07 & 3.95 & -10.77 & -24.85 \\
 & LP w/ \eqref{equ:cn-bound} & 55.52 & 44.27 & 33.75 & 29.16 & 18.50 & 9.86 & 10.60 & 4.17 & 0.54 \\
 & Improvement (\%) & 2.9 & 9.1 & 25.4 & 11.2 & 61.8 & $\infty$ & 168.0 & $\infty$ & $\infty$\\ \hline
 \multirow{3}{*}{$N=1000$} & LP w/o \eqref{equ:cn-bound} & 38.90 & 28.75 & 19.36 & 9.17 & -1.66 & -11.67 & -12.86 & -23.64 & -33.25 \\
 & LP w/ \eqref{equ:cn-bound} & 45.57 & 37.28 & 30.30 & 20.46 & 13.37 & 7.88 & 6.53 & 2.44 & -0.53 \\
 & Improvement (\%) & 17.2 & 29.7 & 56.5 & 123.0 & $\infty$ & $\infty$ & $\infty$ & $\infty$ & $\infty$ \\
                         \bottomrule
\end{tabular}
\end{table}
\Cref{tab:simulation-same-theta} shows the simulation results when $\theta_1=\cdots=\theta_N=\theta$, and \Cref{tab:simulation-diff-theta} shows the results when $\theta_i$ differs across $i\in[N]$. Each result is averaged over 150 simulation runs, and the average profit per minute is reported. In both tables, ``LP w/o \eqref{equ:cn-bound}" refers to the baseline price plan, and ``LP w/ \eqref{equ:cn-bound}" denotes our price plan that accounts for finite rider patience. We also report the relative improvement in performance.

The results demonstrate that our price plan consistently outperforms the baseline. As $\theta$ increases---indicating lower rider patience---the performance gap between the two methods widens. Larger values of $N$ also result in a larger improvement under our method, demonstrating the benefit of modeling patience in more complex matching environments.

\end{ECSwitch}

\end{document}